\documentclass[reqno,11pt]{amsart}
\usepackage{amsmath,amsxtra,amsbsy,enumitem, latexsym, amsfonts, amssymb, amsthm, amscd, stmaryrd}
\setlist[itemize]{leftmargin=0.7cm}
\usepackage{hyperref}
\usepackage{graphics,epsf,psfrag}
\usepackage{bbm, dsfont}
\usepackage{mathtools}
\usepackage{xcolor}
\usepackage{float}
\usepackage{tikz}
\usepackage[headings]{fullpage}
\usepackage{colonequals} 
\usepackage{microtype}
\usepackage{enumitem}
\usepackage{mathrsfs}

\numberwithin{equation}{section}

\usepackage{subcaption}
\definecolor{darkred}{rgb}{0.7,0.1,0.1}

\definecolor{darkgreen}{rgb}{0.1,0.7,0.1}

\newcommand{\bbE}{{\ensuremath{\mathbb E}} }

\newcommand{\bbP}{{\ensuremath{\mathbb P}} }

\renewcommand{\epsilon}{\varepsilon}
\newcommand{\eps}{\varepsilon}

\newcommand{\ga}{\alpha}

\newcommand{\gga}{\gamma}            

\newcommand{\gd}{\delta}
\newcommand{\gth}{\vartheta}

\newcommand{\gz}{\zeta}

\newcommand{\go}{\omega}

\newcommand{\gO}{\Omega}

\newcommand{\gL}{\Lambda}

\newcommand{\dd}{\mathrm{d}}

\newcommand{\cC}{{\ensuremath{\mathcal C}} }

\theoremstyle{plain}
\newtheorem{theorem}{Theorem}[section]
\newtheorem{lemma}[theorem]{Lemma}

\newtheorem{proposition}[theorem]{Proposition}
\newtheorem{corollary}[theorem]{Corollary}

\newtheorem{assumption}[theorem]{Assumption}
\newtheorem{remark}[theorem]{Remark}

\renewcommand{\bar}{\overline}

\DeclareFontFamily{U}{mathx}{}
\DeclareFontShape{U}{mathx}{m}{n}{<-> mathx10}{}
\DeclareSymbolFont{mathx}{U}{mathx}{m}{n}
\DeclareMathAccent{\widehat}{0}{mathx}{"70}
\DeclareMathAccent{\widecheck}{0}{mathx}{"71}

\newcommand{\R}{\mathbb{R}}

\newcommand{\N}{\mathbb{N}}
\newcommand{\ceq}{\coloneqq}
\newcommand{\eqc}{\eqqcolon}

\newcommand{\ind}{{{\mathbbm 1}}}

\newcommand{\ee}{\mathrm{e}}
\newcommand{\probd}{\mathbb{P}}
\newcommand{\Rl}{\mathbb{R}}
\newcommand{\prob}{\mathsf{P}}
\newcommand{\Ex}{\mathsf{E}}
\newcommand{\Exd}{\mathbb{E}}

\newcommand{\pae}{\mathbb{P}\mbox{-a.e.\ }\omega}
\newcommand{\cov}{\mathsf{cov}}
\newcommand{\var}{\mathsf{var}}
\newcommand{\ii}{\mathrm{i}}

\title[The localized phase  of pinning models with correlated Gaussian disorder]{The localized phase  of pinning models \\ with correlated Gaussian disorder}

\author[G. Giacomin]{Giambattista Giacomin}
\address{Universit\`a di Padova,
Dipartimento di Matimatica Tullio Levi-Civita, Via Trieste 63,
35123 Padova, Italy} \email{giacomin@math.unipd.it}

\author[A. Legrand]{Alexandre legrand}
\email{legrand@math.unipd.it}

\author[M. Zamparo]{Marco Zamparo}
\address{Universit\`a degli Studi del Piemonte Orientale, Dipartimento di Scienze e Innovazione Tecnologica, Viale Teresa Michel 11, I-15121 Alessandria, Italy} \email{marco.zamparo@uniupo.it}

\begin{document}

\begin{abstract}
We demonstrate that the results established for the localized regime
of the pinning model with independent disorder -- notably the
$\cC^\infty$ regularity of the free energy, the scaling of the largest
excursion between pinned sites, and the Central Limit Theorems for
both the contact number and its mean -- can be generalized to
translation-ergodic environments under the hypothesis that disorder is
Gaussian. Our results are obtained assuming only summability of the
charges' covariances.  The two key ingredients for our proofs are the
Birkhoff-sum approach introduced in~\cite{GZ25concentration} for
independent disorder, but uniquely suited to handle correlated
environments, and decorrelation tools such as the general and powerful
Nelson's Gaussian hypercontractivity. We also rely on the Bernstein
blocking method to prove a Central Limit Theorem under dependence.
Additionally, we develop further techniques specifically tailored to
the one-dimensional structure of the pinning model.
\medskip

\noindent  \emph{AMS  subject classification (2020 MSC)}:
60K37,  
82B44, 
60K35, 
60F05  	

\end{abstract}
\maketitle

\section{Localization in the pinning model}
\subsection{The pinning model}

Pinning (or wetting) models were introduced to study the effect of a
defect region on a fluctuating string in a variety of contexts
\cite{G07book,dh07,PS70,Vel06}. A concrete example is a
\emph{directed polymer} interacting with a \emph{defect line}, where
the chain may be either attracted or repelled by the line. If the
monomers are not identical, or the defect line is not uniform, then
the interaction strength may vary spatially, rendering the model
non-homogeneous.  The inhomogeneous character of the model is
typically designed such that a homogeneous behavior emerges on a large
scale: this is often achieved by sampling the characteristics of the
monomers or the defect line from a sequence of independent and
identically distributed (i.i.d.) random variables (\emph{the
disorder}). In this way, the model remains homogeneous in a
statistical sense -- and thus on large scales -- while being
potentially highly inhomogeneous at the microscopic level. This
property also holds if i.i.d.\ disorder is replaced by
translation-ergodic disorder; however, correlated disorder generally
complicates the analysis and may lead to fundamentally different model
behaviors \cite{WH83}.

Pinning models may undergo a \emph{localization phase transition}
between a delocalized phase, where the chain has negligible contacts
with the defect line, and a localized regime, where the number of
contacts is proportional to the total length of the chain. This
transition has attracted significant interest in both pure mathematics
and applied sciences, notably physics and biology (see
\cite{DHV92,FLNO86, G07book, Gia11, dh07, KMP00, PS70, Vel06} and
references therein). In this paper we investigate cases involving
correlated disorder, focusing on fine properties within the localized
phase. Our goal is to obtain results under minimal assumptions on the
correlations and, to this aim, we focus on Gaussian disorder.

\smallskip
     
As explained in detail for example in \cite{G07book, Gia11,
  dh07,Vel06}, pinning models may be written in mathematical terms as
Gibbs measures built on renewal sequences and this is the way we
introduce them here.  Let $T_1,T_2,\ldots$ be i.i.d.\ random
variables valued in $\N\ceq\{1,2,\ldots\}$. Set $S_0\ceq0$ and
$S_i\ceq T_1+\cdots+T_i$ for $i\in\N$: hence $S=(S_i)_{i\in\N_0}$ is
a renewal process in $\N_0\ceq\{0\}\cup\N$, which describes the
monomers from the polymer chain that are in contact with the defect
line. In particular, if we define for $n\ge0$
\begin{equation}
L_n \ceq \sup\big\{i\ge0: S_i\leq n\big\}\quad\text{and}\quad M_n\ceq \sup\big\{T_i: i\leq L_n\big\}\,,
\end{equation}
then $L_n$ is the number of contacts between the chain and the defect
line up to the $n$-th monomer and $M_n$ is the largest gap between
contacts (see Figure~\ref{fig:1}). We denote by $\prob$ the law of the
sequence $(T_i)_{i\in\N}$ and we assume that for all $t\in\N$
\begin{equation}
p(t)\ceq \prob[T_1=t]= \frac{\ell(t)}{t^{\ga+1}}\,,
\end{equation}
where $\ga\geq0$ and $\ell:[1,+\infty)\to[0,+\infty)$ is a slowly
    varying function at infinity (see~\cite{BGT89} for the definition
    and properties of slowly varying functions). We also suppose that
    $p(t)>0$ for all $t\in\N$. We refer to~\cite{G07book, Gia11, dh07}
    for the motivations behind those assumptions.

\begin{figure}[h]
\centering
\includegraphics[width=16 cm]{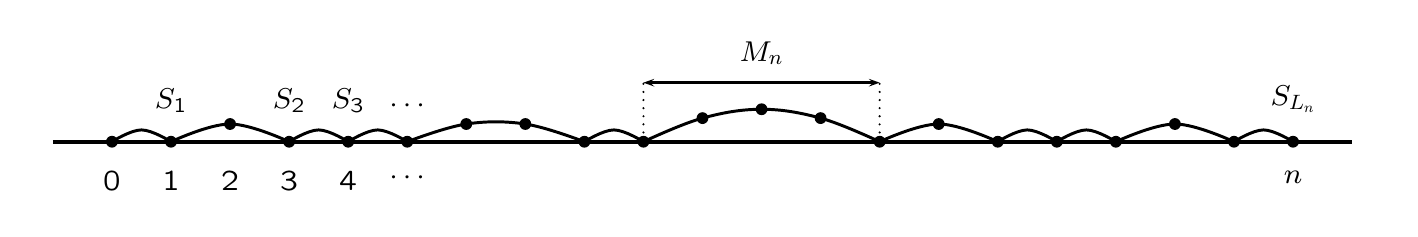}
\caption{\label{fig:1} }
\end{figure}

Denoting by $\Ex$ the expectation with respect to $\prob$ and setting
$X_a\ceq\ind_{\{a\in S\}}$ for $a\in\N_0$ in order to lighten upcoming
notations, the random pinning model for a chain of length $n\in\N$
with \emph{pinning parameter} $h\in\R$ and \emph{charges}
$\go=(\go_a)_{a\in\N_0}\in\R^{\N_0}$, or in the \emph{environment}
$\go$, is defined by the Gibbs change of measure
\begin{equation}
\frac{\dd \prob_{n,h,\go}}{\dd \prob} = \frac{\ee^{\sum_{a=1}^{n} (h+\go_a)X_a }}{Z_{n,h}(\go)} X_n\quad \text{where}\quad Z_{n,h}(\go)\ceq \Ex\big[\ee^{\sum_{a=1}^{n} (h+\go_a)X_a } X_n\big]\,.
\end{equation}
The quantity $Z_{n,h}(\go)$ is called the \emph{(quenched) partition
function}. In other words, if the $a$-th monomer from the chain
interacts with the defect line, i.e.\ $X_a=1$, then the system
receives an energetic reward $h+\go_a$. Moreover, the model is given
with pinned boundary conditions, meaning that $X_n=1$ almost surely
under $\prob_{n,h,\go}$; however, the very same results can be
established in the absence of this constraint. We shall indicate by
$\Ex_{n,h,\omega}$ the expectation with respect to $\prob_{n,h,\go}$.

\smallskip

The pinning model is explicitly solvable in the case where $\go_a=0$
for all $a\in\N_0$, a setting referred to as the \emph{pure} or
\emph{homogeneous} model (see again~\cite{G07book, Gia11, dh07}). The
model has subsequently attracted significant attention in cases where
the environment $\go$ is assumed to be random, typically sampled from
a probability space such that the charges are
i.i.d.\ (see~\cite{Ale08, AZ09, CdH13curve, GT06loc, GT06smooth,
  Lac10mart, Ton08} among others). The present paper focuses on a
correlated random environment and aims to understand the robustness of
the results established for the localized phase in the i.i.d.\ case
under a suitable exponential integrability condition (see
\cite[Assumption.~1.2]{GZ25concentration}). While it is generally
expected that short-range or fast-decaying correlations do not
substantially modify the behavior of the system, it is unclear whether
slowly decaying correlations lead to the same phenomenology or not.

\smallskip

Define $\gO\ceq \R^{\N_0}$ and let $\mathcal{F}$ be the Borel
$\sigma$-field $\mathscr{B}(\R^{\N_0})$. In the remainder of this
paper we consider a random environment $\go\in\gO$ that satisfies the
following assumption.

\begin{assumption}
  \label{assum:gauss}
The sequence $\omega=(\omega_a)_{a\in\N_0}$ is sampled from a
probability measure $\probd$ on $(\Omega,\mathcal{F})$ such that the
canonical projections $\omega\mapsto\omega_a$ form a centered,
stationary Gaussian sequence. Furthermore, the covariance function
$\gamma$ that maps $a\in\N_0$ to $\gamma(a)\ceq\int_\Omega
\omega_0\omega_a\probd[\dd\omega]$ satisfies the summability condition
$\bar\gga\ceq \sum_{a\in\N_0}|\gga(a)|<+\infty$.
\end{assumption}

Two remarks are in order. First, we stress that the function $\gamma$
satisfies the positive-semidefiniteness property
$\sum_{a=1}^n\sum_{b=1}^nu_a\gamma(|a-b|)u_b\ge 0$ for all $n\in\N$
and $u_1,\ldots,u_n\in\R$. We recall that the Kolmogorov extension
theorem guarantees that for any function $\gamma:\N_0\to\R$ enjoying
this property, together with the summability condition
$\sum_{a\in\N_0}|\gga(a)|<+\infty$, there exists a unique probability
measure $\probd$ such that $\int_\Omega
\omega_0\omega_a\probd[\dd\omega]=\gamma(a)$ for all $a\in\N_0$ and
for which Assumption \ref{assum:gauss} is met.  Second, we note that
the shift operator $\vartheta$ on $\gO$, acting as
$\gth\go=(\go_{a+1})_{a\in\N_0}$ for any $\go=(\go_a)_{a\in\N_0}$, is
a transformation that preserves the measure $\probd$ due to
stationarity.  Importantly, the hypothesis $\bar\gamma<+\infty$
implies in particular that $\lim_{a\uparrow\infty}\gamma(a)=0$, so
$\vartheta$ is not only measure-preserving but also mixing (see
\cite[§14.2, Theorem 2]{CFS82book}), and hence ergodic.

\smallskip

Classical arguments, which apply to a much broader framework than that
defined by Assumption~\ref{assum:gauss}, with ergodicity of the
environment and integrability of the charges being the only essential
ingredients (see~\cite{G07book}, notably Theorem~4.6), assure us that
the \emph{free energy}
\begin{equation*}
f(h)\ceq \lim_{n\uparrow\infty} \bbE\bigg[\frac1n \log Z_{n,h}\bigg] =  \lim_{n\uparrow\infty} \frac1n \log Z_{n,h}(\go) 
\end{equation*}
exists and is finite for all $h\in\R$, where $\Exd$ is the expectation
under $\probd$ and where the second convergence holds for $\pae$.
Furthermore, the function $f$ that maps $h$ to $f(h)$ is convex,
non-negative, and non-decreasing.  In particular, we can define the
\emph{(quenched) critical point}
\begin{equation*}
h_c\ceq\inf\big\{h\in\R: f(h)>0\big\}\in [-\infty,+\infty)\,.
\end{equation*}
In general $h_c\leq 0$ (see~\cite[Proposition~5.1]{G07book}) and a
standard comparison with the ``annealed'' model shows that, under
Assumption~\ref{assum:gauss}, $h_c\ge-\bar\gamma>-\infty$
(see~\cite{Poi13}).  The critical point $h_c$ marks the localization
phase transition: the system passes from a \emph{delocalized phase}
for $h<h_c$ to a \emph{localized phase} for $h>h_c$. In fact, the
convexity of $f$, combined with the identity $\partial_h\log
Z_{n,h}=\Ex_{n,h,\cdot}[L_n]$, implies that the average number of
contact points $\bbE[\Ex_{n,h,\cdot}[L_n]]$ satisfies
$\lim_{n\uparrow\infty}(1/n)\bbE[\Ex_{n,h,\cdot}[L_n]]=0$ if $h<h_c$
and $\liminf_{n\uparrow\infty}(1/n)\bbE[\Ex_{n,h,\cdot}[L_n]]>0$ if
$h>h_c$.

\begin{remark}
{\rm When the environment is sampled from a Gaussian law with
  non-summable correlations, i.e. $\bar\gga=+\infty$, the model
  becomes somewhat degenerate: the localization transition may
  completely vanish, indeed $h_c=-\infty$~\cite{Ber13}, and the
  ``annealed'' version of the model is
  ill-defined~\cite{Poi13}. Therefore we exclude this case from the
  present paper, and we do not expect our results to extend without
  modification to that setting.  As a matter of fact, non summable
  correlations generate a phenomenon similar to that observed in the
  i.i.d.\ case with disorder that does not satisfy the exponential
  integrability condition in
  \cite[Proposition.~1.1]{GZ25concentration}, where localization is
  driven by very attracting sites (as opposed very attracting regions
  in the correlated case).}
\end{remark}

\subsection{Main results}

We study the properties of the pinning model in its localized phase,
i.e.\ for $h>h_c$. This was recently addressed in the i.i.d.\ setting
(for a non-Gaussian environment with sub-exponential law) by the first
and last authors \cite{GZ25concentration}, who revisited and strongly
generalized~\cite{GT06loc}. They proved in particular that throughout
the localized phase
\begin{itemize}[label=--]
\item the free energy $f$ is smooth,
\item the largest gap between contact points $M_n$ scales like $\log n$ as $n\uparrow\infty$,
\item both the number of contact points $L_n$ and its mean
  $\Ex_{n,h,\cdot}[L_n]$ satisfy a Central Limit Theorem (CLT), more
  precisely they are asymptotically Gaussian with fluctuations of order
  $\sqrt{n}$.
\end{itemize}
Those results all stem from the fact that \emph{correlations decay
exponentially fast} in the localized phase under the Gibbs
measure. The present paper demonstrates that, for an environment that
satisfies Assumption~\ref{assum:gauss} (which remains in force
throughout the paper), this exponential decay persists despite the
fact that charges' correlations may decay with the distance much
slower than exponentially. We refrain from detailing the specific
statements regarding correlations decay in this section, as they are
somewhat technical; however, the reader may refer to
Lemma~\ref{lem:supercrit:polymercorrelations} and
Corollary~\ref{corol:correlations:polym} below, or to Section~3
in~\cite{GZ25concentration}, for further details.  With this
exponential decay established, we prove that all the aforementioned
results also hold in our setting.

\smallskip

We first state the smoothness of the free energy $f$ and the quenched
CLT for the number of contacts $L_n$, together with a corresponding
concentration inequality. These results are identical to Theorem~1.1
and Theorem~1.2 in~\cite{GZ25concentration}, respectively, which
pertain to the i.i.d.\ setting.

\begin{theorem}
  \label{thm:fsmooth}
  The free energy $f$ is strictly convex and $\cC^\infty$ on
  $(h_c,+\infty)$, and the following property holds for $\pae$: for
  every compact set $H\subset (h_c,+\infty)$ and $r\in\N_0$
  \begin{equation*}
    \adjustlimits\lim_{n\uparrow\infty}\sup_{h\in H}\bigg|\frac{1}{n}\partial^r_h\log Z_{n,h}(\omega)-\partial_h^r f(h)\bigg|=
    \adjustlimits\lim_{n\uparrow\infty}\sup_{h\in H}\bigg|\Exd\bigg[\frac{1}{n}\partial^r_h\log Z_{n,h}\bigg]-\partial_h^r f(h)\bigg|=0\,.
\end{equation*}
Moreover, $f$ is of class Gevrey-$3$ on $(h_c,+\infty)$: in fact for
every closed set $H\subset (h_c,+\infty)$ there exists $c>0$ such that
for all $r\in\N$
 \begin{equation*}
 \sup_{h \in H} \big| \partial_h^r f(h)\big| \le c^r (r!)^3\,.
 \end{equation*}
\end{theorem}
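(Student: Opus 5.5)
The plan follows the Birkhoff-sum strategy of \cite{GZ25concentration}: express $\partial_h^r\log Z_{n,h}(\omega)$ as a sum over $a\le n$ of local functionals of the shifted environment $\gth^a\omega$, up to an error $o(n)$, and then apply the ergodic theorem.

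\textbf{Step 1: cumulant representation.} By definition,
\[
\partial_h^r\log Z_{n,h}(\omega)=\sum_{a_1,\dots,a_r\le n}\kappa_{n,h,\omega}(X_{a_1},\dots,X_{a_r}),
\]
where $\kappa_{n,h,\omega}$ is the joint cumulant under $\prob_{n,h,\go}$.

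\textbf{Step 2: exponential decay of cumulants.} I will use the exponential decay of correlations under the Gibbs measure (Lemma~\ref{lem:supercrit:polymercorrelations}, Corollary~\ref{corol:correlations:polym}), uniformly in $h\in H$. The expected form is a bound
\[
|\kappa(X_{a_1},\dots,X_{a_r})|\le C_r(\omega)\,e^{-c\,\mathrm{diam}(a_1,\dots,a_r)}.
\]
Here $C_r$ has a random prefactor that is integrable or controlled by Birkhoff averages of local quantities, such as the partition functions of excursions. The standard tree-graph or cluster bound for cumulants of $r$ variables with exponentially decaying mixing costs a factor $r!$ or $(r!)^2$. Summing over configurations with $a_1$ fixed then gives $\sum_{a_2,\dots,a_r}(\cdot)\le c^r(r!)^{3}$ times an integrable random variable. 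This is the source of the Gevrey-$3$ bound.

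\textbf{Step 3: replacing the finite-volume cumulant.} Using the same decorrelation, I will replace $\kappa_{n,h,\omega}$ for $a_1$ far from the boundaries by the infinite-volume cumulant $\kappa_{h}(\gth^{a_1}\omega)$. This object is defined through the two-sided localized Gibbs measure, or equivalently as a limit of the finite-volume cumulants. The boundary terms are $O(\log n)$ or $o(n)$ almost surely.

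\textbf{Step 4: ergodic theorem and uniformity.} Birkhoff's theorem, valid since $\gth$ is ergodic under Assumption~\ref{assum:gauss}, gives
\[
\frac1n\partial_h^r\log Z_{n,h}\to \bbE[\kappa_h]=:g_r(h)
\]
almost surely and in $L^1$. Uniformity over $h\in H$ comes from an equicontinuity bound: the $(r+1)$-st derivative is bounded by a Birkhoff sum that converges. It then suffices to take a countable dense set of $h$, obtaining a single full-measure event for all $r$. Since $\frac1n\log Z_{n,h}\to f$ and all derivatives converge locally uniformly, $f\in\cC^\infty$ with $\partial_h^r f=g_r$. The Gevrey bound passes to the limit from Step 2.

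\textbf{Step 5: strict convexity.} It suffices to show $\partial_h^2 f(h)>0$, i.e.\ $\liminf \frac1n\var_{n,h,\omega}(L_n)>0$. I will show this by conditioning on a positive density of ``resampling'' windows where the contact at a given site fluctuates with probability bounded below, again using Birkhoff on local events.

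\textbf{Main obstacle.} The hardest part is Step 2 in the correlated setting. In the i.i.d.\ case the random prefactors are controlled by independence. Here I would control moments of local quantities like $\sum_{b}e^{\omega_b}$ over windows, and their Birkhoff averages, via Gaussian tools. The intended route is hypercontractivity for Lipschitz or polynomial functionals and summable covariances, which keep the variances of block sums of order their length. Together these should ensure the decay constants are uniform on a full-measure set for $h\in H$.
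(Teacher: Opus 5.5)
Your outline has the same architecture as the paper's proof, which takes its quenched parts from \cite{GZ25concentration}. Both bound $\partial_h^r\log Z_{n,h}$ by a Birkhoff sum of local functionals, apply Birkhoff's theorem, and obtain equicontinuity for Arzel\`a--Ascoli. Both read off Gevrey-$3$ as $(r!)^2$ times the $r!$ coming from $\sum_j j^r\ee^{-\lambda j}$, and derive strict convexity from $\liminf_n\bbE[\var_{n,h,\cdot}[L_n]]/n>0$.

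However, the step you single out as the main obstacle is misdiagnosed. The cumulant estimate is a deterministic, purely quenched inequality, namely \eqref{eq:cumulants}: $\sup_{h\in H}|\partial_h^r\log Z_{n,h}(\omega)|\le 2^r(r!)^2\sum_{i=0}^{n-1}\Lambda_r(\gth^i\omega)$, where $\Lambda_r\ceq\sum_{j\in\N}j^r\sup_{h\in H}\Ex^{\otimes2}_{j+1,h,\cdot}[\prod_{k=1}^{j}(1-X_kX_k')]$.
\begin{itemize}
\item You do not need a pointwise bound $C_r(\omega)\ee^{-c\,\mathrm{diam}}$, and the available estimates do not give one. The decay holds only on average over the environment, and Lemma~\ref{lem:supercrit:polymercorrelations} gives directly $\bbE[\Lambda_r]\le C\sum_j j^r\ee^{-\lambda j}\le C'c^r r!$.
\item Birkhoff's theorem only requires $\Lambda_r\in L^1$ and ergodicity of $\gth$. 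So no moment control of local charge functionals is needed. The whole difficulty caused by correlations is already absorbed into Lemma~\ref{lem:supercrit:polymercorrelations}, which is proved by the truncation and comparison argument, not by hypercontractivity.
\item Your hypercontractivity route would not be available as stated. Proposition~\ref{prop:hypercontract} requires a uniform lower bound $\eta>0$ on the quadratic form of $\gamma$, and Assumption~\ref{assum:gauss} does not provide one. The paper gets around this only in the proof of \eqref{eq:largestjump:LB}, by adding small independent charges.
\end{itemize}

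Your Step~3 is unnecessary. Once every derivative of $h\mapsto\frac1n\log Z_{n,h}(\omega)$ is bounded on $H$ by the convergent averages $\frac1n\sum_{i<n}\Lambda_r(\gth^i\omega)$, Arzel\`a--Ascoli does the rest. Combined with the almost sure convergence $\frac1n\log Z_{n,h}\to f(h)$ (first on a countable dense set, then everywhere by equicontinuity), it identifies every subsequential limit of $\frac1n\partial_h^r\log Z_{n,h}$ as $\partial_h^rf$. So no infinite-volume Gibbs measure or infinite-volume cumulant is needed. Constructing one would also require a two-sided environment, since $\gth$ is the one-sided shift on $\R^{\N_0}$. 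The statement about $\bbE[\frac1n\partial_h^r\log Z_{n,h}]$ follows the same way, because the dominating bound has expectation $n\,2^r(r!)^2\bbE[\Lambda_r]$.

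For Step~5, your ``resampling windows'' have positive density only because of the same input the paper uses. That input is the quenched variance estimates of \cite{GZ25concentration}, combined with Birkhoff's theorem and the fact that $\liminf_n\frac1n\bbE[\Ex_{n,h,\cdot}[L_n]]>0$ for $h>h_c$.
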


In light of Theorem~\ref{thm:fsmooth}, the cumulant method for normal
approximation directly yields the quenched concentration inequality
and CLT.

\begin{theorem}
  \label{thm:CLT:Ln:quenched}
  The following properties hold for $\pae$:
  \begin{enumerate}[label=({\roman*})]
\item for every closed set $H\subset (h_c,+\infty)$ there exists
  $\kappa_\omega>0$ such that for all $u\ge 0$ and $n\in\N$
 \begin{equation*}
   \sup_{h\in H}\prob_{n,h,\omega}\Big[\big|L_n-\Ex_{n,h,\omega}[L_n]\big|>u\Big] \le 2\exp\left\{-\kappa_\omega\frac{u^2}{n+u^{5/3}}\right\};
 \end{equation*}
    
 \item for every compact set $H\subset (h_c,+\infty)$
 \begin{equation*}
 \adjustlimits\lim_{n\uparrow\infty}\sup_{h\in H}\,\sup_{u\in\Rl}\,\Bigg|\prob_{n,h,\omega}\bigg[\frac{L_n-\Ex_{n,h,\omega}[L_n]}{\sqrt{n v(h)}}\le u\bigg]-
     \frac{1}{\sqrt{2\pi}}\int_{-\infty}^u\ee^{-\frac{1}{2}z^2}\dd z\Bigg|=0\,,
\end{equation*}
where $v(h)\ceq\partial_h^2 f(h)>0$.

 \end{enumerate}
\end{theorem}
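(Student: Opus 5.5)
The plan is to apply the cumulant method to the quenched cumulant generating function of $L_n$. Since the pinning parameter enters the Gibbs weight linearly through $hL_n$ (with $L_n=\sum_{a=1}^n X_a$), we have, for every $t$,
\begin{equation*}
\log \Ex_{n,h,\omega}\big[\ee^{tL_n}\big]=\log Z_{n,h+t}(\omega)-\log Z_{n,h}(\omega)\,.
\end{equation*}
Hence the $r$-th cumulant of $L_n$ under $\prob_{n,h,\omega}$ is exactly $\kappa_r^{(n)}(h,\omega)=\partial_h^r\log Z_{n,h}(\omega)$. The whole proof therefore reduces to controlling these derivatives, uniformly in $h\in H$, at the level of a fixed $\omega$.

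The key input is a finite-volume version of the Gevrey-$3$ estimate in Theorem~\ref{thm:fsmooth}. Concretely, I aim to show that for $\pae$ and every closed $H\subset(h_c,+\infty)$ there is $c_\omega<\infty$ such that
\begin{equation*}
\sup_{h\in H}\big|\partial_h^r\log Z_{n,h}(\omega)\big|\le n\,c_\omega^r (r!)^3\qquad\text{for all } n\in\N,\ r\ge 2\,.
\end{equation*}
I expect this bound to come out of the same machinery that proves Theorem~\ref{thm:fsmooth}. There, the $r$-th derivative is a sum of joint cumulants $\sum_{a_1,\dots,a_r\le n}\kappa(X_{a_1},\dots,X_{a_r})$. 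The exponential decay of correlations under the Gibbs measure, with a random but $\omega$-a.s.\ finite constant controlled through Birkhoff sums (Lemma~\ref{lem:supercrit:polymercorrelations}, Corollary~\ref{corol:correlations:polym}), gives $|\sum\kappa|\le n c^r (r!)^3$ by a standard tree-graph bound. I take this bound as established alongside Theorem~\ref{thm:fsmooth}. The one point needing care is uniformity over all $n$ rather than just large $n$. For this I use the almost sure finiteness of the random constants and enlarge $c_\omega$ to absorb the finitely many small $n$.

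Given that bound, part (i) follows from the Bentkus–Rudzkis / Saulis–Statulevičius lemma. If the cumulants satisfy $|\kappa_r|\le (r!)^{1+\gamma}\Delta^{-(r-2)}\,nC$ with $\gamma=2$, then the tail satisfies $\prob[|Y|>u]\le 2\exp\{-\kappa u^2/(n+u^{(2+\gamma)/(1+\gamma)}\,\mathrm{const})\}$. Here $(2+\gamma)/(1+\gamma)=4/3$, but matching with the paper's $u^{5/3}$ requires the alternative form of the statement, and either version suffices after adjusting $\kappa_\omega$, since $u\le n$ trivially because $0\le L_n\le n$. Alternatively, one can derive (i) directly by a Chernoff bound: bound $\log\Ex_{n,h,\omega}[\ee^{t(L_n-\Ex L_n)}]\le \sum_{r\ge2}n c_\omega^r(r!)^3 t^r/r!$, which is not summable. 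So one truncates at $r\le R(t)$ via Taylor's formula with the remainder bounded using the derivative bound at $h+s$, $s\in[0,t]$, which is kept inside a slightly larger closed set. Optimizing $t$ yields the stated shape.

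For part (ii), write $Y_n=(L_n-\Ex_{n,h,\omega}[L_n])/\sqrt{n v(h)}$. Its cumulant of order $r\ge3$ is bounded by $n^{1-r/2}c_\omega^r(r!)^3 v(h)^{-r/2}\to0$, uniformly in $h\in H$ because $v$ is continuous and positive on the compact $H$ by Theorem~\ref{thm:fsmooth}. The variance is $\partial_h^2\log Z_{n,h}(\omega)/(n v(h))\to1$ uniformly on $H$ by the uniform convergence in Theorem~\ref{thm:fsmooth} with $r=2$. Convergence of all cumulants implies convergence of moments to Gaussian ones, hence weak convergence, because the Gaussian law is determined by its moments. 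To get uniformity in $h$ and $u$, I argue by contradiction along sequences $h_n\in H$, and Pólya's theorem upgrades the convergence to be uniform in $u$. The main obstacle is the uniform-in-$n$ finite-volume Gevrey bound with an $\omega$-dependent constant; everything else is the standard cumulant method.
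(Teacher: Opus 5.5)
Your plan is the paper's argument: the finite-volume Gevrey-$3$ cumulant bound you need comes from the quenched two-replica estimate \eqref{eq:cumulants}, combined with Lemma~\ref{lem:supercrit:polymercorrelations} and Birkhoff's theorem (not from a tree-graph expansion of the pairwise decay in Corollary~\ref{corol:correlations:polym}), where applying Birkhoff to the single variable $\sum_{j\in\N}\ee^{\lambda j/2}\sup_{h\in H}\Ex^{\otimes2}_{j+1,h,\cdot}[\prod_{k=1}^{j}(1-X_kX_k')]$ and using $j^r\le r!\,(2/\lambda)^r\ee^{\lambda j/2}$ makes $c_\omega$ uniform in $r$ as well as in $n$, and your moments-plus-P\'olya argument for (ii) is an equivalent substitute for the normal-approximation theorem of \cite{DJS22}. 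One correction: the Bentkus--Rudzkis/Saulis--Statulevi\v{c}ius tail bound has exponent $(1+2\gamma)/(1+\gamma)$, not $(2+\gamma)/(1+\gamma)$ (the latter already fails at $\gamma=0$, where it would replace Bernstein's $u$ by $u^2$), and at $\gamma=2$ the correct exponent is exactly the $5/3$ of the statement, so no adjustment of $\kappa_\omega$ is needed.
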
 

Regarding the behavior of the random centering term
$\Ex_{n,h,\cdot}[L_n]$ in parts $(i)$ and $(ii)$ of Theorem
\ref{thm:CLT:Ln:quenched}, we state the following results, which
include a CLT. They are analogous to parts $(ii)$, $(iii)$, and $(iv)$
of Theorem~1.3 in~\cite{GZ25concentration} for the i.i.d.\ case.  


\begin{theorem}
  \label{th:centering}
  The following conclusions hold:
  \begin{enumerate}[label=({\roman*})]
 
  \item for every closed set $H\subset (h_c,+\infty)$ there exists $c>0$ such that for all $n\in\N$
\begin{equation*}
  \sup_{h\in H}\Big|\Exd\big[\Ex_{n,h,\cdot}[L_n]\big]-\rho(h)n\Big|\le c\,,
\end{equation*}
where $\rho(h):=\partial_hf(h)$;

\item  there exists a locally Lipschitz continuous function $w:(h_c,+\infty)\to
[0,+\infty)$ such that for every compact set $H\subset (h_c,+\infty)$
   \begin{equation*}
     \adjustlimits\lim_{n\uparrow\infty}\sup_{h\in H}
     \Bigg|\Exd\Bigg[\bigg(\frac{\Ex_{n,h,\cdot}[L_n]-\Exd[\Ex_{n,h,\cdot}[L_n]]}{\sqrt{n}}\bigg)^{\!\!2}\Bigg]-w(h)\Bigg|=0\,;
   \end{equation*}
   
\item for every compact set $H\subset (h_c,+\infty)$ such that
  $w(h)>0$ for all $h\in H$ one has
\begin{equation*}
\adjustlimits\lim_{n\uparrow\infty}\sup_{h\in H}\,\sup_{u\in\Rl}\,\Bigg|\probd\bigg[\frac{\Ex_{n,h,\cdot}[L_n]-\Exd[\Ex_{n,h,\cdot}[L_n]]}{\sqrt{n w(h)}}\le u\bigg]
 -\frac{1}{\sqrt{2\pi}}\int_{-\infty}^u\ee^{-\frac{1}{2}z^2}\dd z\Bigg|=0\,;
\end{equation*}

 \item for all $h>h_c$
   \begin{equation*}
     w(h)\ge\bigg[\gamma(0)+2\sum_{a\in\N}\gamma(a)\bigg] v(h)^2\,,
     \end{equation*}
where $v(h)\ceq\partial_h^2 f(h)>0$. Hence, $w(h)>0$ for all $h>h_c$ when
$\gamma(0)+2\sum_{a\in\N}\gamma(a)>0$.

  \end{enumerate}
\end{theorem}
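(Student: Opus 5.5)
The plan is to work throughout with the decomposition of $\Ex_{n,h,\omega}[L_n]=\sum_{a=1}^n\Ex_{n,h,\omega}[X_a]$ into local terms, and to use exponential decay of correlations under the Gibbs measure in the localized phase (Lemma~\ref{lem:supercrit:polymercorrelations} and Corollary~\ref{corol:correlations:polym}). The decay lets me approximate each $\Ex_{n,h,\omega}[X_a]$, for $a$ far from the boundaries, by a stationary bi-infinite quantity $\xi_h(\vartheta^a\omega)$. Here $\xi_h$ is the infinite-volume contact density at the origin, obtained as a limit of finite-volume expectations. The error is $O(\ee^{-c\min(a,n-a)})$ in an $L^p(\probd)$ sense, uniformly for $h$ in a closed subset of $(h_c,+\infty)$. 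Since $\Exd[\xi_h]=\rho(h)$ (via Theorem~\ref{thm:fsmooth} and $\partial_h\log Z_{n,h}=\Ex_{n,h,\cdot}[L_n]$), part (i) follows by summing the boundary errors, which form a convergent geometric series.

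For part (ii) I would write $\Ex_{n,h,\omega}[L_n]-\Exd[\cdot]=\sum_{a}(\xi_h(\vartheta^a\omega)-\rho(h))+R_n$ with $\Exd[R_n^2]=O(1)$. Then $w(h)=\sum_{k\in\mathbb Z}\mathrm{cov}(\xi_h,\xi_h\circ\vartheta^{|k|})$, and the task is to prove absolute summability of these covariances. This is the key decorrelation step. I would use Gaussian tools: the covariance of two functionals of a Gaussian field is controlled by $\sum_{a,b}|\gamma(a-b)|\,\Exd[|\partial_{\omega_a}F|\,|\partial_{\omega_b}G|]$, via an interpolation/Gaussian integration by parts formula. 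For $F=\xi_h$, the derivative $\partial_{\omega_a}\xi_h$ is a truncated Gibbs correlation between $X_0$ and $X_a$, which decays like $\ee^{-c|a|}$ with a random prefactor. Nelson's hypercontractivity is what controls the moments of these random prefactors. Combining this with $\bar\gamma<\infty$ gives $|\mathrm{cov}(\xi_h,\xi_h\circ\vartheta^k)|\le C\sum_{a,b}\ee^{-c|a|}\ee^{-c|b-k|}|\gamma(a-b)|$, which is summable in $k$. Local Lipschitz continuity of $w$ then comes from the same bounds applied to $\partial_h\xi_h$, whose covariances are controlled uniformly on compacts.

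For part (iii) I would use Bernstein blocking. Cut $[1,n]$ into big blocks of length $p_n$ separated by small gaps of length $q_n$, with $q_n\ll p_n\ll n$. The gaps contribute negligibly in $L^2$. To decouple the block sums, I would replace $\xi_h$ by a version truncated to a window of radius $q_n/3$, at exponentially small cost. Block sums then depend on disjoint sets of charges that are far apart, but these are Gaussian and only summably correlated, not independent. To handle this, I would compare characteristic functions: the difference between the joint characteristic function and the product of marginals is bounded by the Gaussian interpolation formula. That error is of order $\sum_{\text{blocks}}\sum_{|a-b|>q_n/3}|\gamma(a-b)|\times$(derivative bounds), which is $o(1)$ because it involves tails of $\sum|\gamma|$. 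The independent-block approximation then satisfies Lyapunov's condition, using moment bounds from hypercontractivity. Uniformity in $h\in H$ follows from Lipschitz dependence on $h$ together with compactness.

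For part (iv) I would obtain a lower bound by projecting onto the first Wiener chaos. Since $\Exd[\partial_{\omega_a}\xi_h]$ summed over $a$ equals $\partial_h\rho(h)=v(h)$, the linear part of $\sum_a\xi_h(\vartheta^a\omega)$ is $v(h)\sum_a\omega_a$ up to boundary effects. Its variance per unit length tends to $v(h)^2[\gamma(0)+2\sum_{a\ge1}\gamma(a)]$, and orthogonality of chaoses gives the inequality. I expect the main obstacle to be part (ii)/(iii): the derivative/covariance bound with random, non-uniform decay constants, where exponential decay of Gibbs correlations has to be combined with slowly decaying environment correlations.
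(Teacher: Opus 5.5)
Your outline is sound but is built around a different object than the paper's proof. You pass to an infinite-volume stationary density $\xi_h$. This requires extending the field to $\mathbb{Z}$. Its existence as an $L^1$ limit, with error $O(\ee^{-\lambda\min\{a,n-a\}})$ uniformly on closed $H$, does follow from \eqref{basic_tool_1}, \eqref{basic_tool_2} and Corollary~\ref{corol:correlations:polym}. You then get (ii) from the Green--Kubo series $w(h)=\sum_{k\in\mathbb{Z}}\Exd\big[\overline{\xi_h}\;\overline{\xi_h\circ\vartheta^k}\big]$, whose absolute summability is the infinite-volume analogue of Lemma~\ref{lem:correlations:envt}.

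The paper never constructs $\xi_h$. Part (i) uses the finite-volume identities directly. Part (ii) uses approximate subadditivity of $\nu_n=\Exd[\overline{\Ex_{2n,h,\cdot}[L_n]}^2]$, with the cross term absorbed by $\zeta_n=\sum_{a\le n}(n-a)|\gamma(a)|$ via Lemmas~\ref{lem:correlations:envt} and~\ref{lem:sums}. The passage from $\Ex_{2n}$ to $\Ex_n$, the $(cn)$-Lipschitz bound and Arzel\`a--Ascoli are imported from \cite{GZ25concentration}. Your route buys an explicit formula for $w$ and uniform convergence on $H$ by dominated convergence, since the majorant $C\sum_{i,j}|\gamma(i-j)|\ee^{-\lambda|i|-\lambda|j-k|}$ does not depend on $h$. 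The paper's route avoids infinite-volume limits altogether.

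In (iii) the paper runs the same Bernstein scheme without your truncation step. It interpolates directly between untruncated blocks and pays $\sum_{a>d_n}|\gamma(a)|$ plus an $r_n/n$ term through Lemma~\ref{lem:sums}. In (iv) it uses Cauchy--Schwarz against $\sum_{a=1}^n\omega_a$, which is projection onto a single direction of the first chaos. Your first-chaos projection equals $v(h)\sum_{a=1}^n\omega_a$ up to an $O(1)$ error in $L^2$, so both give the same constant.

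Two points need correcting, neither fatal.

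First, hypercontractivity is not available here. Proposition~\ref{prop:hypercontract} needs $\sum_{a,b}u_a\gamma(|a-b|)u_b\ge\eta\|u\|^2$ with $\eta>0$, which is not assumed; the paper uses it only after adding independent noise, in the proof of Proposition~\ref{prop:largestjump}. It is also not needed.
\begin{itemize}
\item There are no random prefactors to control. Since $|\cov_{n,h,\omega}[X_a,X_b]|\le 1$, you get $\Exd[\cov_{n,h,\cdot}[X_a,X_b]^2]\le\Exd[|\cov_{n,h,\cdot}[X_a,X_b]|]\le C\ee^{-\lambda|b-a|}$ straight from Corollary~\ref{corol:correlations:polym}.
\item For Lyapunov, your normalized block sums $B_s$ lie deterministically in $[0,p_n/\sqrt{n w(h)}]$. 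Hence $\sum_s\Exd[|\overline{B_s}|^3]\le\max_s\|\overline{B_s}\|_\infty\sum_s\Exd[\overline{B_s}^2]\to0$, provided you take $p_n=o(\sqrt{n})$. This is compatible with $\log n\ll q_n\ll p_n$.
\end{itemize}

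Second, local Lipschitz continuity of $w$ through $\partial_h\xi_h$ is not just ``the same bounds''. It requires $L^2$ exponential decay of $\partial_{\omega_i}\partial_h\xi_h=\sum_b\kappa_3(X_0,X_i,X_b)$, a sum of joint third cumulants under the Gibbs measure. Corollary~\ref{corol:correlations:polym} does not give this directly. The renewal property reduces it to two-point quantities: under $\prob_{n,h,\omega}$, for $a<b<c$, $\kappa_3(X_a,X_b,X_c)=\cov[X_a,X_b]\cov[X_b,X_c]/\Ex[X_b]-\Ex[X_b]\cov[X_a,X_c]$. Here $|\cov[X_b,X_c]|/\Ex[X_b]\le 1$ is controlled by the Corollary. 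This step has to be supplied.

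Finally, uniformity in $h$ for (iii) follows most simply from the fact that all your estimates are uniform on $H$. The Corollary has the supremum inside the expectation, and (ii) converges uniformly. Arguing via Lipschitz dependence on $h$ is unnecessary.
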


In order to illustrate the physical content of part $(iv)$ of
Theorem~\ref{th:centering} we observe that, for any $n\in\N$, the
total charge $\sum_{a=1}^n\omega_a$ has variance $\int_\Omega
\big(\sum_{a=1}^n\omega_a\big)^2\,\probd[\dd\omega]=\sum_{a=1}^n\sum_{b=1}^n\gamma(|a-b|)=n\gamma(0)+2\sum_{a=1}^n(n-a)\gamma(a)\ge
0$. Dividing by $n$, recalling that $\bar\gamma<+\infty$, and letting
$n\uparrow\infty$, the dominated convergence theorem allows us to
conclude that $\gamma(0)+2\sum_{a\in\N}\gamma(a)\ge 0$ is the limiting
scaled variance of the total charge, which therefore exhibits
non-negligible fluctuations when
$\gamma(0)+2\sum_{a\in\N}\gamma(a)>0$. Part $(iv)$ of
Theorem~\ref{th:centering} states that $w(h)>0$ for all $h>h_c$ as
soon as the total charge is non-vanishing.

Note that~\cite{GZ25concentration} proves that the centering term
$\Ex_{n,h,\cdot}[L_n]$ satisfies a concentration inequality and a CLT
in the i.i.d.\ case, again employing the cumulant method. However,
certain limitations arise in the use of this method within the
correlated framework, as discussed in further detail below. To
overcome these limitations, we establish the CLT in part $(iii)$ of
Theorem~\ref{th:centering} by resorting to the Bernstein blocking
technique \cite{IL71book}, which consists in approaching the
system as a union of large blocks of sites separated by smaller, yet
still large, gaps.

\smallskip

Before stating our last result on the largest excursion, we need to
introduce an \emph{alternative free energy}. It is the function $\mu$
defined by the following proposition, which is analogous
to~\cite[Proposition~1.3]{GZ25concentration} for the i.i.d.\ case.
Here $Z_{n,h}^{\textbf{-}}(\go)\ceq e^{-(h+\go_n)} Z_{n,h}(\go)$.
\begin{proposition}
  \label{prop:mu}
For every $h\in\R$ the limit
\begin{equation*}
\mu(h)\ceq \lim_{n\uparrow\infty}-\frac1n \log \bbE\bigg[\frac1{Z_{n,h}^{\textbf{-}}}\bigg]
\end{equation*}
exists and satisfies
\begin{equation*}
\min\bigg\{\frac{f(h)}{3},\frac{f(h)^2}{9\bar\gga}\bigg\} \leq \mu(h)\leq f(h)\,.
\end{equation*}
The function $\mu$ that maps $h$ to $\mu(h)$ is a $1$-Lipschitz continuous function on $\R$. 
\end{proposition}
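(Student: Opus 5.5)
Existence of the limit will come from a superadditivity argument. Set $a_n\ceq \log\Exd[1/Z^{\textbf{-}}_{n,h}]$. The renewal structure gives $Z_{n+m,h}(\go)\ge Z_{n,h}(\go)\,Z_{m,h}(\gth^n\go)$, and hence $Z^{\textbf{-}}_{n+m,h}(\go)\ge Z^{\textbf{-}}_{n,h}(\go)\,e^{h+\go_n} Z^{\textbf{-}}_{m,h}(\gth^n\go)$. Since the pieces are correlated, we cannot simply factor the expectation. Instead, I would insert a gap of length $k$. Then
\[
Z^{\textbf{-}}_{n+k+m}\ \ge\ Z^{\textbf{-}}_{n}(\go)\,p(k+1)\,e^{h+\go_n}\,Z^{\textbf{-}}_{m}(\gth^{n+k}\go),
\]
where $p(k+1)$ accounts for the single jump across the gap. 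The inverse partition functions are functions of Gaussian fields on $[1,n]$ and $[n+k+1,n+k+m]$, and we control their correlation via Gaussian decorrelation. Concretely, I would use Nelson's hypercontractivity (or a Gaussian interpolation/Hölder argument): for $F,G$ depending on two blocks whose cross-covariance operator has norm $\eps_k\ceq\sum_{a>k}|\gga(a)|\to0$,
\[
\Exd[FG]\le \|F\|_{p}\|G\|_{p}\quad\text{with } p=1+\eps_k \text{ (roughly)}.
\]
I also need $\Exd[(1/Z^{\textbf{-}}_n)^{p}]$ to be comparable to $\Exd[1/Z^{\textbf{-}}_n]^{p}$ up to $e^{o(n)}$ errors; this I would get by choosing $k=k(n)$ growing slowly. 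Combined with the bound $1/Z^{\textbf{-}}\le$ exponential moments of $\go_n$ (Gaussian, bounded), this yields approximate subadditivity of $a_n$ with errors $o(n)$. A de Bruijn–Erdős-type generalized Fekete lemma then gives existence of $\mu(h)=-\lim a_n/n$.

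The upper bound $\mu\le f$ is Jensen: $\Exd[1/Z^{\textbf{-}}_n]\ge \exp(-\Exd\log Z^{\textbf{-}}_n)$, and $\Exd\log Z^{\textbf{-}}_n=nf(h)+o(n)$. The Lipschitz property follows from $e^{-|\delta| n}Z_{n,h}\le Z_{n,h+\delta}\le e^{|\delta| n}Z_{n,h}$ (since $L_n\le n$), which gives $|a_n(h+\delta)-a_n(h)|\le |\delta| n$ and passes to the limit.

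The lower bound is the main obstacle and needs concentration. I would write
\[
\Exd\big[1/Z^{\textbf{-}}_n\big]\le e^{-n(f-\eta)}+\Exd\big[\ind\{\log Z^{\textbf{-}}_n< n(f-\eta)\}/Z^{\textbf{-}}_n\big].
\]
Since $\log Z_n$ is Lipschitz in $\go$ with respect to the Euclidean norm, with constant $\sqrt{n}$, and the covariance operator has norm at most $\bar\gga$, Gaussian concentration gives
\[
\probd\big[\log Z_n-\Exd\log Z_n<-t\big]\le e^{-t^2/(2n\bar\gga)}.
\]
The same Lipschitz bound controls the exponential moment $\Exd[e^{-\lambda(\log Z-\Exd\log Z)}]\le e^{\lambda^2 n\bar\gga/2}$. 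Hence
\[
\Exd[1/Z^{\textbf{-}}_n]\le e^{-\Exd\log Z^{\textbf{-}}_n+n\bar\gga/2}.
\]
This alone is too crude, so I would use Hölder together with the tail bound: split on $t=\theta n$ and optimize. The resulting rate is $\min(f/3,f^2/(9\bar\gga))$, where the $f/3$ branch comes from taking $\eta=2f/3$, and the quadratic branch comes from Gaussian tails $e^{-\eta^2n/(2\bar\gga)}$ in the regime $f\lesssim\bar\gga$. The constants $3$ and $9$ should emerge from choosing the Hölder exponent appropriately. Some care is needed with the $\go_n$ term and the $o(n)$ corrections to $\Exd\log Z_n$.
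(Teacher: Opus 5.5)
Your Jensen upper bound and the Lipschitz argument via $0\le L_n\le n$ are correct and match the paper. The existence part and the lower-bound part, however, each have a genuine gap.

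\textbf{Existence.} The decorrelation inequality you invoke is false under Assumption~\ref{assum:gauss} alone.

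Nelson's bound $\Exd[FG]\le\|F\|_p\|G\|_p$, for all non-negative $F,G$ of two Gaussian blocks, holds if and only if $p-1$ dominates the largest canonical correlation between the blocks. That is the norm of $\Sigma_{11}^{-1/2}\Sigma_{12}\Sigma_{22}^{-1/2}$, not the norm of $\Sigma_{12}$; test it on $F=\ee^{\alpha U}$, $G=\ee^{\beta V}$ with $U,V$ maximally correlated linear functionals. Summability bounds $\|\Sigma_{12}\|$ by $\sum_{a\ge k}|\gga(a)|$, but it gives no lower bound on $\Sigma_{11},\Sigma_{22}$.

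Here is a counterexample. The stationary sequence with spectral density $\varphi(\theta)=\exp\{-1/\sin^2(\theta/2)\}$ has rapidly decaying covariances, yet $\int\log\varphi=-\infty$ makes it deterministic. Hence, for every fixed $k$, the canonical correlation between the charges on $\{1,\ldots,n\}$ and $\go_{n+k+1}$ tends to $1$ as $n\uparrow\infty$, which forces $p\to2$. This is why Proposition~\ref{prop:hypercontract} requires $\sum_{a,b}u_a\gga(|a-b|)u_b\ge\eta$, and why the paper never uses it for $\mu$. In Section~\ref{sec:proofmainresults} it is applied only after adding independent noise of variance $\eta$.

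What works without any spectral condition is the structure of $\Lambda=\prod_i1/Z^{\textbf{-}}$. Since $|\partial_{\go_a}\partial_{\go_b}\Lambda|\le2\Lambda$, Gaussian interpolation moves $\log\Exd[\Lambda]$ by at most $2\sum_{a,b}|\Gamma_{ab}-\Gamma'_{ab}|$ (Lemma~\ref{lem:comparison}). The paper uses this in three steps:
\begin{enumerate}
\item It replaces $\gga$ by the finite-range $\gga_r$, with $\gga_r(0)$ raised by $2\sum_{a>r}|\gga(a)|$ to keep positive semidefiniteness.
\item It proves existence there by exact independence across gaps of length $r$ and Fekete (Lemma~\ref{lem:mu:convergencesupercrit}). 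The boundary weight $\ee^{-\go_0}$ is handled through $\nu_n$ and \eqref{eq:local_bound}.
\item It lets $r\uparrow\infty$, using the $n$-uniform error $6\sum_{a>r}|\gga(a)|$.
\end{enumerate}
Alternatively, you could interpolate your cross-block covariance to zero at cost $O(\min\{n,m\}\sum_{a\ge k}|\gga(a)|)$ with $k$ fixed, then send $k\uparrow\infty$. Neither route needs $L^p$ norms or $k=k(n)$.

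\textbf{Lower bound.} The key ingredient is missing. On the bad event you need an upper bound on $1/Z^{\textbf{-}}_{n,h}$, and Gaussian moment bounds cannot supply a useful one.

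Hölder with $\Exd[(Z^{\textbf{-}}_{n,h})^{-p}]^{1/p}\le\ee^{-\Exd\log Z^{\textbf{-}}_{n,h}+pn\bar\gga}$ and $\probd[\text{bad}]^{1/q}$ gives a rate below $f+f^2/(4\bar\gga)-\bar\gga$. This is negative when $f(h)$ is small compared with $\bar\gga$, i.e.\ precisely near $h_c$. So no choice of exponent yields $\min\{f/3,f^2/(9\bar\gga)\}$.

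The paper uses the deterministic bound $Z^{\textbf{-}}_{n,h}(\go)\ge p(n)$ coming from the single-excursion trajectory. Equivalently, $p(n)/Z^{\textbf{-}}_{n,h}=\prob_{n,h,\go}[T_1=n]\le1$. This caps the integrand on the bad event at the polynomial price $1/p(n)$, and gives
$p(n)\Exd[1/Z^{\textbf{-}}_{n,h}]\le\probd[\log Z^{\textbf{-}}_{n,h}-\Exd\log Z^{\textbf{-}}_{n,h}<-\frac23f(h)n]+p(n)\ee^{\frac23f(h)n-\Exd\log Z^{\textbf{-}}_{n,h}}$.

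Note also that the covariance matrix has norm at most $\gga(0)+2\sum_{a\ge1}|\gga(a)|\le2\bar\gga$, not $\bar\gga$. So the Gaussian tail is $\ee^{-t^2/(4n\bar\gga)}$, and with $t=\frac23f(h)n$ this is exactly where $f^2/(9\bar\gga)$ comes from. Your $\ee^{-t^2/(2n\bar\gga)}$ would instead give $2f^2/(9\bar\gga)$.
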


The alternative free energy $\mu$ plays a pivotal role in our
proofs. In particular, the bounds in Proposition \ref{prop:mu} show
that $\mu(h)>0$ if and only if $f(h)>0$; in other words, for the
purpose of identifying $h_c$, $\mu$ is equivalent to the standard free
energy $f$.  We refer to~\cite{GZ25concentration}, as well
as~\cite{CGT12, GT06loc, GT09}, for a more detailed discussion on the
alternative free energy, including an explanation of why $1/\mu(h)$
and $1/f(h)$ represent two natural correlation lengths for the model.

The size of the largest excursion $M_n$ made by the polymer in the
localized phase $h>h_c$ is determined by $\mu(h)>0$.  This is
formalized by the following proposition, which is the analogue
of~\cite[Proposition~1.4]{GZ25concentration} for our setting.

\begin{proposition}
   \label{prop:largestjump}
    The following property holds for $\pae$: for every $h>h_c$
  \begin{equation*}
 \lim_{n\uparrow\infty}\,\frac{M_n}{\log n}=\frac{1}{\mu(h)}
  \end{equation*}
  in probability under $\prob_{n,h,\omega}$.
\end{proposition}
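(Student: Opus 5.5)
The proof splits into an upper bound and a lower bound on $M_n$. Fix $h>h_c$ and $\epsilon>0$; by Proposition~\ref{prop:mu}, $\mu(h)>0$. The aim is to show that, for $\pae$,
\[
\prob_{n,h,\go}\big[M_n>(1+\epsilon)\log n/\mu(h)\big]\to0
\quad\text{and}\quad
\prob_{n,h,\go}\big[M_n<(1-\epsilon)\log n/\mu(h)\big]\to0 .
\]
Almost-sure validity for all $h$ simultaneously follows from monotonicity in $h$ and the Lipschitz continuity of $\mu$: prove the statement for rational $h$ and interpolate, using that $M_n$ is stochastically monotone in $h$. If that monotonicity is unclear, one can instead work with uniform bounds over compact sets.

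\textbf{Upper bound.} The starting point is the renewal decomposition
\[
\prob_{n,h,\go}[a,b\in S,\ (a,b)\cap S=\emptyset]=\frac{Z_{a,h}(\go)\,K(b-a)\,e^{h+\go_b}\,Z_{n-b,h}(\gth^b\go)}{Z_{n,h}(\go)},
\]
where $K(t)=p(t)$. Writing $Z_{n,h}(\go)\ge Z_{a,h}(\go)\,Z^{\textbf{-}}_{b-a,h}(\gth^a\go)\,e^{h+\go_b}\,Z_{n-b,h}(\gth^b\go)$, the ratio above is at most $p(b-a)/Z^{\textbf{-}}_{b-a,h}(\gth^a\go)$. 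A union bound over $a\le n$ and $t=b-a\ge t_n:=(1+\epsilon)\log n/\mu$ therefore bounds the target probability by $\sum_{a\le n}\sum_{t\ge t_n} p(t)/Z^{\textbf{-}}_{t,h}(\gth^a\go)$. Its $\probd$-expectation is at most $n\sum_{t\ge t_n}e^{-(\mu-\delta)t}\le n^{-\epsilon'}$, by the definition of $\mu$ in Proposition~\ref{prop:mu} and stationarity. Markov's inequality and Borel--Cantelli along $n=2^k$, together with monotonicity of $M_n$ between dyadic scales (adjusting $\epsilon$ slightly), then give the $\pae$ bound.

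\textbf{Lower bound.} This is the main obstacle, because it requires decorrelation. I first find, for $\pae$, many disjoint windows $[a,a+t]$ with $t=(1-\epsilon)\log n/\mu$ on which $1/Z^{\textbf{-}}_{t,h}(\gth^a\go)\ge e^{-(\mu+\delta)t}$. The expected number of such windows is at least about $n^{\epsilon'}$ by the definition of $\mu$. A matching lower bound on a typical number uses a second-moment or Paley--Zygmund argument over windows spaced by $\log^2 n$. Covariances between windows decay only summably, so here I would use Gaussian hypercontractivity: bound the covariance of functionals of distant blocks by $|\gamma|$-type quantities via the Mehler/Ornstein--Uhlenbeck interpolation. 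Since $\sum_{a>d}|\gamma(a)|\to0$, the correlations are negligible at spacing $\log^2 n$, and the variance estimate gives concentration.

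On a good window, a big jump over it is favored. Exponential decay of Gibbs correlations (Lemma~\ref{lem:supercrit:polymercorrelations}) gives that the polymer has a contact near $a$ and near $a+t$ with probability bounded below. Given these contacts, the probability of jumping directly from $a$ to $a+t$ is
\[
\frac{p(t)}{Z^{\textbf{-}}_{t,h}(\gth^a\go)}\ge e^{-(\mu+2\delta)t}=n^{-(1-\epsilon)(1+\cdot)},
\]
and the decay of correlations yields approximate independence across windows. Choosing the window length so that the number of good windows times this probability diverges, $\prob_{n,h,\go}[M_n\ge t]\to1$. The delicate point is that the windows where $Z^{\textbf{-}}$ is small are exactly the repulsive regions; one must check that the cost of conditioning contacts at their endpoints is only subexponential. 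For this I would use the bound $Z_{n}\le Z_aZ_{t}Z_{n-a-t}\cdot e^{O(\log n)}$ under the correlation estimates.
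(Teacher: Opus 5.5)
Your upper bound is correct and is essentially the paper's argument. The paper obtains a Chernoff-weighted form of the same union bound over $a<n$, $t\ge t_n$ of $p(t)/Z^{\textbf{-}}_{t,h}(\vartheta^a\omega)$ from the quenched computation of~\cite{GZ25concentration}, and then uses Birkhoff's theorem where you use Markov and Borel--Cantelli along $n=2^k$. Your version works, but the reason is that the union bound is monotone in $n$, not that $M_n$ is. You should not rely on stochastic monotonicity of $M_n$ in $h$, which is not available in this generality. Your fallback (uniform bounds on neighbourhoods $U_{h_o,\epsilon}$ plus compactness) is the route the paper takes.

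The lower bound has a genuine gap: as set up, it pays the price $\ee^{-\mu t}$ twice.
\begin{itemize}
\item The definition of $\mu$ only guarantees that a window is good (threshold $\ee^{-(\mu+\delta)t}$) with probability of order $\ee^{-(\mu+\delta')t}$, and this cannot be improved in general. If $\mu(h)<f(h)$, a typical window has $p(t)/Z^{\textbf{-}}_{t,h}\approx\ee^{-f(h)t}$, far below the threshold, and $\Exd[1/Z^{\textbf{-}}_{t,h}]$ may be carried by windows of probability close to $\ee^{-\mu t}$.
\item You therefore have about $n^{\epsilon}$ good windows, and on each you certify a jump probability $\ge \ee^{-(\mu+2\delta)t}\approx n^{-(1-\epsilon)}$.
\item The product is $n^{2\epsilon-1+O(\delta)}\to0$, so your scheme yields at best $M_n\gtrsim \log n/(2\mu(h))$.
\end{itemize}
You cannot split the first moment into (number of windows above a fixed level) $\times$ (level). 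What must be shown large is the full sum $\sum_s Y_s$, with $Y_s:=\prob_{m_n,h_o,\vartheta^{l_s}\omega}[T_1=m_n]$, whose mean is $r_n p(m_n)\Exd[1/Z^{\textbf{-}}_{m_n,h_o}]\approx n^{\epsilon}$. This is exactly what the paper controls through $E_n$ in \eqref{eq:largestjump:LB_3}.

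The two supporting steps are not justified either.

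\emph{Decorrelation of the environment.} Gaussian interpolation gives nothing for indicators of good windows, since they have no derivatives. A covariance bound of size $\sum_{a>d_n}|\gamma(a)|$ is useless against events of polynomially small probability. Proposition~\ref{prop:hypercontract} needs a positive-definiteness constant $\eta>0$, which Assumption~\ref{assum:gauss} does not provide. The paper's ideas here are:
\begin{itemize}
\item apply hypercontractivity to $\prod_s(1-cY_s)$, which needs only the first moment $\Exd[Y_0]$ and costs only the fixed factor $\lambda=\eta/(2\eta+2\bar\gamma)$ in the exponent;
\item create the missing $\eta$ by convolving $\probd$ with independent centered Gaussian charges of variance $\eta$, paying $\ee^{-\delta_o m_n}$ on windows where $\sum_a|\omega'_{l_s+a}|\le\delta_o m_n$ (a Chernoff bound handles the other windows);
\item return to $\probd$ via Lemma~\ref{lem:comparison} at cost $\ee^{-\eta m_n}$.
\end{itemize}
Your second-moment idea could be rescued by applying it to the smooth sum $N_n=\sum_s Y_s$ instead of indicators. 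Since $|\partial_{\omega_i}Y_s|\le Y_s$, the interpolation formula used for Lemma~\ref{lem:correlations:envt} together with Cauchy--Schwarz bounds the variance of $N_n$ by $2\bar\gamma\, m_n r_n\Exd[Y_0]=o((\Exd[N_n])^2)$.

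\emph{Contacts at the window endpoints.} The cost of forcing these contacts must be $n^{o(1)}$, not $\ee^{O(\log n)}$: a polynomial loss per window destroys the $n^{\epsilon}$ margin. A contact probability bounded below is also not available on repulsive windows. The paper gets a loss $\zeta_n^{-1}=n^{o(1)}$ in \eqref{eq:largestjump:LB_00} from \eqref{eq:local_bound}, with the relevant distance reduced to $O(\log n)$ thanks to the already proved upper bound \eqref{eq:largestjump:UB}. It combines this with the almost sure bound $\max_{a<n}|\omega_a|\le\sqrt{6\gamma(0)\log n}$.
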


\begin{remark}
{\rm Within our framework, the alternative free energy could also be
  defined using the standard partition function $Z_{n,h}$ instead of
  $Z_{n,h}^{\textbf{-}}$. However, the latter is required
  in~\cite{GZ25concentration} to handle environments for which
  $\bbE[1/Z_{n,h}]=+\infty$. While this integrability issue does not
  arise in the Gaussian case, we maintain the use of $Z_{n,h}^{\textbf{-}}$
  both to remain consistent with the notation
  in~\cite{GZ25concentration} and because it allows for more
  streamlined arguments in certain proofs.}
\end{remark}

\subsection{Discussion of the results}
\noindent {\it The pinning model in correlated environment.}  The
present paper is not the first to investigate the pinning model in a
random correlated environment. We have already mentioned previous
works~\cite{Ber13, BP15, Poi13finite, Poi13} addressing the case of a
correlated Gaussian environment (see also~\cite{BT13} for the hierarchical pinning model). Moreover, the model also has been
studied in the context of specific non-Gaussian environments, such as
those derived from a renewal process or a random walk~\cite{Ber14,
  BL12, BL25I, BS10}. These works have primarily focused on the
critical properties of the pinning model at the localization phase
transition, in particular verifying the validity of the
Weinrib-Halperin prediction for disorder relevance and irrelevance
(see~\cite{WH83}). In contrast to this literature, we take a different
direction by analyzing the interior of the localized phase, aiming to
understand the pathwise behavior of the model and the fine properties
of the free energy.

\medskip

\noindent {\it Main interest of the paper.} The main contribution of
this paper is the proof that, under Assumption~\ref{assum:gauss},
correlations with respect to the Gibbs measure decay exponentially
fast within the localized phase.  With this exponential decay in hand,
we then show that almost all the aforementioned results follow by
arguing as in \cite{GZ25concentration} for the i.i.d.\ case. In
particular, the cumulant estimates and the Birkhoff-sum approach
developed therein extend to the correlated setting, with the latter
relying simply on the ergodicity of the shift operator $\vartheta$ on
$\Omega$. We refer to~\cite{GZ25concentration} for a broader
discussion of these methods and the implications of the results above,
such as for the analysis of Griffiths' singularities and the effect of
disorder on the big-jump phenomenon behind the large deviations of the
contact number in the homogeneous model. Regarding big-jump phenomena
specifically, we believe that the findings of \cite{GZ25BJ} on the
ability of disorder to suppress them can be extended to a Gaussian
correlated environment, and we plan to explore this issue in a
forthcoming work.  For the remainder of this discussion we focus on
the exponential decay of correlations under the Gibbs measure.

In order to establish the exponential decay of correlations under the
Gibbs measure we develop the following two primary tools, which
provide decorrelation inequalities under the environment law
$\bbP$:
\begin{itemize}[label=--]
\item
 a \emph{truncation argument} on the covariance function
 $\gamma$. Informally, when comparing finite-volume alternative free
 energies for two different Gaussian laws with covariance functions
 $\gamma$ and $\gamma'$, their difference can be bounded in terms of
 $\gamma-\gamma'$ (see Lemma~\ref{lem:comparison} below). This allows
 us to approximate the original Gaussian field identified by $\gamma$
 with one having finite-range dependencies, i.e.\ such that
 $\gga'(a)=0$ for all sufficiently large $a$;

\item 
a general decorrelation inequality for disorder in the spirit of
\emph{Nelson's Gaussian hypercontractivity}~\cite{Nel73}. While more
general than the truncation argument, this tool requires a quantified
positive-definiteness property for the covariance function $\gamma$
(see Proposition~\ref{prop:hypercontract} below), which would
translate into stronger assumptions on $\gamma$. However, we present
an argument that circumvents this issue by applying the
hypercontractivity estimate to the environment perturbed by the
addition of small independent charges.

  \end{itemize}

An hypothesis that has frequently appeared in the study of the pinning
model in Gaussian environment, often for technical reasons (see
e.g.~\cite{Ber13,BP15}), is the invertibility of the covariance
operator $\Gamma$ on $\ell^1(\N)$, which has entry $\gamma(|a-b|)$ in
row $a$ and column $b$. We emphasize that our results are free from
this hypothesis, requiring only the summability of correlations.

 \medskip

\noindent {\it The random centering term.}  Comparing our work with
the i.i.d.\ case studied in~\cite{GZ25concentration}, we see that the
investigation of the centering term $\Ex_{n,h,\cdot}[L_n]$ requires
new tools. Specifically, \cite{GZ25concentration} establishes a
concentration inequality and a CLT for this term by applying a
cumulant method to the random variable $\Ex_{n,h,\cdot}[L_n]$. In our
setting, this approach fails unless one assumes a very rapid decay of
the covariance function $\gamma$ (e.g. exponential), as further
illustrated in Remark~\ref{rm:correlations:envt} below. Under such a
strong assumption one would recover the same cumulant estimates as
in~\cite{GZ25concentration}, and consequently the same concentration
bound and CLT.  However, no additional hypothesis on $\gamma$ beyond
Assumption~\ref{assum:gauss} is necessary for the CLT; indeed we are
able to prove a CLT for $\Ex_{n,h,\cdot}[L_n]$ by employing the
Bernstein blocking technique.

As a last remark, we mention that part $(iv)$ of
Theorem~\ref{th:centering} guarantees that $w$ is strictly positive
provided that $\gamma(0)+2\sum_{a\in\N}\gamma(a)>0$; physically, this
means that the limiting total charge is non-vanishing. While this
condition on the covariance function $\gamma$ is very mild (recalling
that $\gamma(0)+2\sum_{a\in\N}\gamma(a)\ge 0$ holds for any $\gamma$),
it may not yet be optimal. Indeed, we cannot currently rule out the
existence of a covariance function $\gamma$ for which
$\gamma(0)+2\sum_{a\in\N}\gamma(a)=0$ and $w(h)>0$ for some, or even
all, $h>h_c$.

\subsubsection*{Organization of the paper and overview of the proofs}
Section~\ref{sec:tools} provides three general results used to prove
our main findings: a concentration inequality for the finite-volume
free energy (Proposition~\ref{prop:concentration}), a comparison of
partition functions across different environments
(Lemma~\ref{lem:comparison}), and Gaussian hypercontractivity
(Proposition~\ref{prop:hypercontract}).  In Section~\ref{sec:mu} we
prove Proposition~\ref{prop:mu} regarding the alternative free
energy. We obtain bounds on this quantity in Lemma~\ref{lem:mu:bounds}
by appealing to the aforementioned concentration inequality; we then
demonstrate existence by means of a truncation argument on the
covariance function. This argument relies on the comparison of
partition functions and the existence result for environments with
finite-range dependencies established in
Lemma~\ref{lem:mu:convergencesupercrit}. In
Section~\ref{sec:quenchedcorrelations}, specifically
Lemma~\ref{lem:supercrit:polymercorrelations}, we state the
exponential decorrelation of the polymer under the Gibbs measure, the
proof of which exploits the positivity of the alternative free energy
and the same truncation argument used in the previous section.  Then,
in Section~\ref{sec:proofmainresults} we demonstrate
Theorems~\ref{thm:fsmooth} and~\ref{thm:CLT:Ln:quenched}, as well as
Proposition~\ref{prop:largestjump}. While these proofs mostly follow
from Lemma~\ref{lem:supercrit:polymercorrelations}, the proof of
Proposition~\ref{prop:largestjump} additionally requires the Gaussian
hypercontractivity property. Finally, Section~\ref{sec:centeringterm}
is devoted to the proof of Theorem~\ref{th:centering} on the centering
term via classical Gaussian methods and the Bernstein blocking
technique for the CLT.  We emphasize that, in addition to the
ingredients above, quenched computations are central to many of our
proofs; since they remain independent of the law of the environment,
we adopt these from \cite{GZ25concentration}, omitting the details for
the sake of conciseness.

\section{Dealing with a correlated Gaussian environment}
\label{sec:tools}

In this section we provide three fundamental results that follow
directly from the Gaussian nature of the environment. These include a
concentration inequality for the finite-volume free energy of the
pinning model and a comparison of partition functions across different
environments. Furthermore, we establish Gaussian hypercontractivity
under an additional assumption regarding the environment: a quantified
positive-definiteness property of the covariance function
$\gamma$. These results will serve later as the necessary groundwork
for proving our main findings.

\smallskip

A few notions from the theory of matrices are needed in the following.
For an $n\in\N$ we denote by $\|u\|\ceq\sqrt{\sum_{a=1}^nu_a^2}$ the
Euclidean norm of $u=(u_1,\ldots,u_n)\in\R^n$ and by
$\|M\|\ceq\sup_{u\in\mathbb{S}_n}\|Mu\|$ the induced norm of a matrix
$M\in\Rl^{n\times n}$, where $\mathbb{S}_n:=\{u\in\R^n:\|u\|=1\}$ is
the unit sphere in $\R^n$. We recall that $\|M^\top\|=\|M\|$ and
$\|MM^\top\|=\|M\|^2$, where $M^\top$ is the transpose of
$M$. Moreover
\begin{equation}
  \|M\|=\sup_{u\in\mathbb{S}_n}\sup_{v\in\mathbb{S}_n}\sum_{a=1}^n\sum_{b=1}^n u_a(M)_{ab}v_b\le\sqrt{\max_{a\in\{1,\ldots,n\}}\sum_{b=1}^n|(M)_{ab}|\max_{b\in\{1,\ldots,n\}}\sum_{a=1}^n|(M)_{ab}|}\,.
\label{eq:bound_spectral_norm}
\end{equation}

\subsection{A concentration inequality}

The following concentration inequality, which will be used to prove
Proposition~\ref{prop:mu}, is a version of a well-known concentration
inequality for Lipschitz functions of Gaussian random
variables. Recall that
$\bar\gamma\ceq\sum_{a\in\N_0}|\gamma(a)|<+\infty$ by Assumption
\ref{assum:gauss}.  Let us briefly mention that, in the case of an
i.i.d.\ environment, a (slightly different) concentration inequality
was proved in~\cite{GZ25concentration} for general sub-exponential
distributions by using a variant of McDiarmid's inequality
from~\cite{MP21}.

\begin{proposition}
  \label{prop:concentration}
For all $n\in\N$, $h\in\R$, and $\eps\ge0$ 
\begin{equation*}
\bbP\Big[\big|\log Z_{n,h} - \bbE\left[\log Z_{n,h}\right]\big|>\eps\Big] \le 2\ee^{-\frac{\eps^2}{4n \bar\gga}}\,,
\end{equation*}
and the same bound holds if $Z_{n,h}$ is replaced with
$Z_{n,h}^{\textbf{-}}$.
\end{proposition}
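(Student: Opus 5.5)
We write the environment on $\{1,\ldots,n\}$ as a linear image of a standard Gaussian vector and apply the Gaussian concentration inequality for Lipschitz functions. Let $\Gamma_n\in\R^{n\times n}$ be the covariance matrix with entries $(\Gamma_n)_{ab}=\gamma(|a-b|)$. Since $\Gamma_n$ is symmetric positive semidefinite, it has a square root $\Gamma_n^{1/2}$. Then $(\omega_1,\ldots,\omega_n)$ has the same law as $\Gamma_n^{1/2}\xi$, where $\xi$ is a standard Gaussian vector in $\R^n$. Since $Z_{n,h}(\omega)$ depends only on $\omega_1,\ldots,\omega_n$, we may write $\log Z_{n,h}=F(\Gamma_n^{1/2}\xi)$ with $F(x)\ceq\log\Ex[\ee^{\sum_{a=1}^n(h+x_a)X_a}X_n]$, $x\in\R^n$. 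The plan is to bound the Lipschitz constant of $G\ceq F\circ\Gamma_n^{1/2}$ with respect to the Euclidean norm and then invoke the classical estimate $\bbP[|G(\xi)-\bbE G(\xi)|>\eps]\le 2\ee^{-\eps^2/(2L^2)}$.

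\emph{Lipschitz bound for $F$.} The function $F$ is smooth, and $\partial_{x_a}F(x)$ equals the Gibbs probability that $a$ is a contact point in environment $x$. This probability lies in $[0,1]$, so
\[
\|\nabla F(x)\|=\sqrt{\textstyle\sum_a(\partial_{x_a}F)^2}\le\sqrt n .
\]
Therefore $F$ is $\sqrt n$-Lipschitz. It follows that $G$ is $L$-Lipschitz with
\[
L\le\sqrt n\,\|\Gamma_n^{1/2}\|=\sqrt{n\|\Gamma_n\|}.
\]
Here we use $\|M\|^2=\|MM^\top\|$ with $M=\Gamma_n^{1/2}$, which gives $\|\Gamma_n^{1/2}\|^2=\|\Gamma_n\|$.

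\emph{Norm of $\Gamma_n$.} By \eqref{eq:bound_spectral_norm}, $\|\Gamma_n\|$ is at most the maximal absolute row sum, namely
\[
\max_a\sum_b|\gamma(|a-b|)|\le\gamma(0)+2\sum_{a\in\N}|\gamma(a)|\le2\bar\gga.
\]
Hence $L^2\le2n\bar\gga$, and the Gaussian concentration inequality gives exactly the bound $2\ee^{-\eps^2/(4n\bar\gga)}$.

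For $Z^{\textbf{-}}_{n,h}$ we have $\log Z^{\textbf{-}}_{n,h}=F(\omega)-h-\omega_n$. Its $n$-th partial derivative is $\Ex_{n,h,x}[X_n]-1=0$, because $X_n=1$ almost surely under the pinned measure. The remaining partial derivatives are unchanged, so the gradient again has norm at most $\sqrt n$ (indeed at most $\sqrt{n-1}$), and the same argument applies verbatim.

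The only point needing care is the constant: one has to use the Lipschitz form of Gaussian concentration (Tsirelson–Ibragimov–Sudakov, with constant $1/2$) and the row-sum bound $2\bar\gga$. Everything else is routine.
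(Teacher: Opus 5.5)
Your proof is correct and follows essentially the same route as the paper: you write the charges as a linear image of a standard Gaussian vector, bound the Lipschitz constant by $\sqrt{n}\,\|\Gamma_n\|^{1/2}\le\sqrt{2n\bar\gamma}$ using that the Gibbs contact probabilities lie in $[0,1]$ and the row-sum bound on $\|\Gamma_n\|$, and then apply Gaussian concentration for Lipschitz functions. The differences are only cosmetic: you use the symmetric square root rather than the Cholesky factor, and a gradient bound rather than integration along a segment; your explicit appeal to the sharp Tsirelson--Ibragimov--Sudakov constant $1/2$ is precisely what produces the stated exponent.
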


\begin{proof}[Proof of Proposition~\ref{prop:concentration}]
Fix $n\in\N$ and $h\in\R$. We prove the statement for the
finite-volume free energy $\log Z_{n,h}$; the reader can check
that the same argument yields the result for $\log
Z_{n,h}^{\textbf{-}}$ \emph{mutatis mutandis}.  Let
$\Gamma\in\R^{n\times n}$ be the covariance matrix of $n$ consecutive
charges, with entry $\gamma(|a-b|)$ in row $a$ and column $b$, and let
$M$ be its Cholesky factor, so $\Gamma=MM^\top$. Consider the smooth
function $\phi$ that maps $u\in\R^n$ to
\begin{equation*}
  \phi(u)\ceq\log\Ex\Big[\ee^{\sum_{a=1}^n\{h+(Mu)_a\}X_a}X_n\Big]\,.
  \end{equation*}
We note that if $u_1,\ldots,u_n$ are the outcomes of $n$ independent
standard normal random variables, then $(Mu)_1,\ldots,(Mu)_n$ are $n$
Gaussian charges with covariance matrix $\Gamma$, and this implies
that the law of $u=(u_1,\ldots,u_n)\mapsto \phi(u)$ coincides with the
law of $\omega=(\omega_a)_{a\in\N_0}\mapsto\log
Z_{n,h}(\omega)=\log\Ex[\ee^{\sum_{a=1}^n\{h+\omega_a\}X_a}X_n]$.  The
proposition then follows from the concentration inequality for
Lipschitz functions of independent standard normal random variables
(see e.g.~\cite[Theorem~2.1.12]{Tao12}), provided that we establish
that $\phi$ is $\sqrt{2 n\bar\gga}$-Lipschitz with respect to the
Euclidean norm.

Given $u,v\in\R^n$, we can write $|\phi(u)-\phi(v)|\le\int_0^1|\partial_z
\phi(zu+v-zv)|\dd z$ with
\begin{align*}
  \Big|\partial_z \phi\big(zu+v-zv\big)\Big|&=
\Bigg|\sum_{a=1}^n\frac{\Ex\big[ X_a\ee^{\sum_{b=1}^n\{h+(zMu+Mv-zMv)_b\}X_b}X_n\big]}{\Ex\big[\ee^{\sum_{b=1}^n\{h+(zMu+Mv-zMv)_b\}X_b}X_n\big]}(Mu-Mv)_a\Bigg|\\
  &\le\sum_{a=1}^n \big|(Mu-Mv)_a\big|\le\sqrt{n}\,\|M(u-v)\|\le\sqrt{n}\,\|M\|\|u-v\|\,,
\end{align*}
where the second inequality is due to the Cauchy-Schwarz inequality.
On the other hand, $\|M\|=\sqrt{\|\Gamma\|}$ and
\eqref{eq:bound_spectral_norm}, combined with the symmetry of
$\Gamma$, yields
\begin{equation*}
  \|\Gamma\|\le\max_{a\in\{1,\ldots,n\}}\sum_{b=1}^n\big|\gamma(|a-b|)\big|\le 2\sum_{a\in\N_0}|\gamma(a)|=2\bar\gamma\,.
\end{equation*}
Putting everything together we conclude that $|\phi(u)-\phi(v)|\le
\sqrt{2 n\bar\gga}\,\|u-v\|$, which shows that $\phi$ is $\sqrt{2
  n\bar\gga}$-Lipschitz with respect to the Euclidean norm.
\end{proof}

\subsection{A comparison result}

The following technical lemma establishes the comparison mentioned
above of partition functions across different environments.  It will
be invoked several times throughout this paper.

\begin{lemma}
  \label{lem:comparison}
Let $\probd$ and $\probd'$ be respectively the laws of Gaussian
charges with covariance functions $\gamma$ and $\gamma'$ as in
Assumption \ref{assum:gauss} and let $\Exd$ and $\Exd'$ denote their
expectations. Then for any $h\in\R$ and integers $0\leq a_1<b_1\leq
a_2<\cdots<b_l\le n$ with $l\ge 1$
\begin{equation*}
  \Bigg|\frac{1}{n}\log \Exd \bigg[\prod_{i=1}^l\frac {1}{Z_{b_i-a_i,h}^{\textbf{-}}(\vartheta^{a_i}\cdot)}\bigg]-
  \frac{1}{n}\log \Exd' \bigg[\prod_{i=1}^l\frac {1}{Z_{b_i-a_i,h}^{\textbf{-}}(\vartheta^{a_i}\cdot)}\bigg]\Bigg|
   \le 2\sum_{c\in\N_0}\big|\gamma(c)-\gamma'(c)\big|\,.
\end{equation*}
\end{lemma}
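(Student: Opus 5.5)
The plan is a standard Gaussian interpolation between the two covariance structures, differentiating in the interpolation parameter and using Gaussian integration by parts; the bound then reduces to a uniform estimate on second derivatives of the integrand, which is available from the renewal/Gibbs structure.

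First I observe that $Z_{m,h}^{\textbf{-}}(\vartheta^a\omega)=\Ex[\ee^{\sum_{c=1}^{m-1}(h+\omega_{a+c})X_c}X_m]$ depends only on $\omega_{a+1},\ldots,\omega_{a+m-1}$. Hence the integrand
\begin{equation*}
F(\omega)\ceq\prod_{i=1}^l\frac{1}{Z_{b_i-a_i,h}^{\textbf{-}}(\vartheta^{a_i}\omega)}=\ee^{-G(\omega)},\qquad G\ceq\sum_{i=1}^l\log Z_{b_i-a_i,h}^{\textbf{-}}(\vartheta^{a_i}\cdot),
\end{equation*}
is a smooth function of the finitely many charges $\omega_1,\ldots,\omega_n$ (the blocks $(a_i,b_i)$ are disjoint). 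Let $\Gamma,\Gamma'\in\R^{n\times n}$ be the covariance matrices of these charges under $\probd,\probd'$. Set $\Gamma_t\ceq t\Gamma+(1-t)\Gamma'$, which is positive semidefinite for $t\in[0,1]$. To avoid any degeneracy issue I will realize the interpolating field as $\sqrt t\,Y+\sqrt{1-t}\,Y'$ with $Y,Y'$ independent of laws $\probd,\probd'$. Let $\phi(t)\ceq\log\Exd_t[F]$. The claim then follows from
\begin{equation*}
|\phi'(t)|\le 2n\sum_{c\in\N_0}|\gamma(c)-\gamma'(c)|\qquad\text{for all }t\in(0,1).
\end{equation*}

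By the Gaussian interpolation formula (integration by parts),
\begin{equation*}
\phi'(t)=\frac{1}{2\,\Exd_t[F]}\sum_{j,k=1}^n(\Gamma-\Gamma')_{jk}\,\Exd_t\big[\partial_j\partial_kF\big],
\qquad \partial_j\partial_kF=F\,\big(\partial_jG\,\partial_kG-\partial_j\partial_kG\big).
\end{equation*}
The derivatives of $G$ have a Gibbs interpretation. If $j$ lies in block $i$, then $\partial_jG=\rho_j\in[0,1]$ is the probability of a contact at $j$ under the corresponding (free-end) pinning measure on that block, and $\partial_jG=0$ otherwise. Likewise, $\partial_j\partial_kG=\rho_{jk}-\rho_j\rho_k$ if $j,k$ lie in the same block, and $0$ otherwise. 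Thus the bracket equals $2\rho_j\rho_k-\rho_{jk}\in[-1,2]$ within a block, and $\rho_j\rho_k\in[0,1]$ across blocks, so it is always bounded by $2$ in absolute value. Since $F>0$, this gives
\begin{equation*}
|\phi'(t)|\le\sum_{j,k=1}^n\big|\gamma(|j-k|)-\gamma'(|j-k|)\big|\le n\Big(|\Delta(0)|+2\sum_{c\ge1}|\Delta(c)|\Big)\le 2n\sum_{c\in\N_0}|\Delta(c)|,
\end{equation*}
where $\Delta\ceq\gamma-\gamma'$. Integrating over $t\in[0,1]$ and dividing by $n$ yields the lemma.

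The main technical point is justifying the interpolation formula: $F$ is unbounded, so I need integrability of $F$ and its derivatives under $\Exd_t$. This is easy because $F\le\ee^{C+\sum_{a\le n}|\omega_a|}$ and has Gaussian exponential moments, while $|\partial_j\partial_kF|\le 2F$; dominated convergence then permits differentiating under the expectation. The other point needing care is the uniform bound $2$ on the bracket $\partial_jG\,\partial_kG-\partial_j\partial_kG$, which relies on the probabilistic interpretation of the derivatives of $\log Z^{\textbf{-}}$.
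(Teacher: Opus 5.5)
Your proposal is correct and follows essentially the same route as the paper: Gaussian interpolation (your $\sqrt{t}\,Y+\sqrt{1-t}\,Y'$ is the paper's $z\omega+\sqrt{1-z^2}\,\omega'$ with $t=z^2$), Gaussian integration by parts, and the uniform bound $|\partial_j\partial_k F|\le 2F$ from the Gibbs interpretation of the derivatives, giving $\sum_{j,k}|\gamma(|j-k|)-\gamma'(|j-k|)|\le 2n\sum_{c}|\gamma(c)-\gamma'(c)|$. Two cosmetic corrections: the block measures are pinned at $b_i-a_i$ rather than free-end, and $F$ is actually bounded by $\prod_i p(b_i-a_i)^{-1}$ (since $Z^{\textbf{-}}_{m,h}\ge p(m)$), so your integrability justification is simpler than you feared.
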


\begin{proof}[Proof of Lemma \ref{lem:comparison}]
Fix $h\in\R$ and integers $0\leq a_1<b_1\leq a_2<\cdots<b_l\le n$ with
$l\ge 1$.  In order to lighten upcoming formulas we introduce the
random variable $\Lambda$ that maps
$\omega=(\omega_a)_{a\in\N_0}\in\Omega$ to
  \begin{equation*}
    \Lambda(\omega)\ceq\prod_{i=1}^l\frac {1}{Z_{b_i-a_i,h}^{\textbf{-}}(\vartheta^{a_i}\omega)}=\prod_{i=1}^l\frac{1}{\Ex\big[\ee^{\sum_{c=a_i+1}^{b_i-1}(h+\omega_c)X_{c-a_i}}X_{b_i-a_i}\big]}\,.
  \end{equation*}
The function $\Lambda$ actually depends only on the charges at certain
sites among $1,\ldots,n$ and it is smooth and bounded, along with its
derivatives of all order. We observe in particular that for all
$\omega=(\omega_c)_{c\in\N_0}\in\Omega$
\begin{align*}
  \partial_{\omega_a}\partial_{\omega_b}\Lambda(\omega)&=2\Lambda(\omega)\frac{\Ex\big[X_{a-a_i}\ee^{\sum_{c=a_i+1}^{b_i-1}(h+\omega_c)X_{c-a_i}}X_{b_i-a_i}\big]}{\Ex\big[\ee^{\sum_{c=a_i+1}^{b_i-1}(h+\omega_c)X_{c-a_i}}X_{b_i-a_i}\big]}
  \frac{\Ex\big[X_{b-a_i}\ee^{\sum_{c=a_i+1}^{b_i-1}(h+\omega_c)X_{c-a_i}}X_{b_i-a_i}\big]}{\Ex\big[\ee^{\sum_{c=a_i+1}^{b_i-1}(h+\omega_c)X_{c-a_i}}X_{b_i-a_i}\big]}\\
  &\quad-\Lambda(\omega)\frac{\Ex\big[X_{a-a_i}X_{b-a_i}\ee^{\sum_{c=a_i+1}^{b_i-1}(h+\omega_c)X_{c-a_i}}X_{b_i-a_i}\big]}{\Ex\big[\ee^{\sum_{c=a_i+1}^{b_i-1}(h+\omega_c)X_{c-a_i}}X_{b_i-a_i}\big]}
\end{align*}
if $a$ and $b$ lay in the same block $\{a_i+1,\ldots,b_i-1\}$ and
\begin{equation*}
  \partial_{\omega_a}\partial_{\omega_b}\Lambda(\omega)=\Lambda(\omega)\frac{\Ex\big[X_{a-a_i}\ee^{\sum_{c=a_i+1}^{b_i-1}(h+\omega_c)X_{c-a_i}}X_{b_i-a_i}\big]}{\Ex\big[\ee^{\sum_{c=a_i+1}^{b_i-1}(h+\omega_c)X_{c-a_i}}X_{b_i-a_i}\big]}
  \frac{\Ex\big[X_{b-a_j}\ee^{\sum_{c=a_j+1}^{b_j-1}(h+\omega_c)X_{c-a_j}}X_{b_j-a_j}\big]}{\Ex\big[\ee^{\sum_{c=a_j+1}^{b_j-1}(h+\omega_c)X_{c-a_j}}X_{b_j-a_j}\big]}
\end{equation*}
if $a$ and $b$ lay in two different blocks $\{a_i+1,\ldots,b_i-1\}$
and $\{a_j+1,\ldots,b_j-1\}$. These formulas allow us to deduce that
$|\partial_{\omega_a}\partial_{\omega_b}\Lambda|\le 2\Lambda$ for all
$a,b\in\{1,\ldots,n\}$.

The proof of the lemma involves a standard argument of Gaussian
interpolation:
\begin{align}
  \nonumber
   \big|\log\Exd[\Lambda]-\log\Exd'[\Lambda]\big|&=
   \bigg|\log\int_\Omega\probd[\dd\omega]\,\Lambda(\omega)-\log\int_\Omega\probd'[\dd\omega']\,\Lambda(\omega')\bigg|\\
   &\le \int_0^1\dd z \Bigg|\partial_z\log\int_{\Omega}\probd[\dd\omega]\int_{\Omega}\probd'[\dd\omega']\,\Lambda\big(z\omega+\sqrt{1-z^2}\omega'\big)\Bigg|\,.
   \label{eq:comparison_1}
\end{align}
On the other hand, for $z\in(0,1)$ we find
\begin{align}
  \nonumber
  &\partial_z\int_{\Omega}\probd[\dd\omega]\int_{\Omega}\probd'[\dd\omega']\,\Lambda\big(z\omega+\sqrt{1-z^2}\omega'\big)\\
  \nonumber
  &\qquad=\int_{\Omega}\probd[\dd\omega]\int_{\Omega}\probd'[\dd\omega']\sum_{a=1}^n\bigg(\omega_a-\frac{z}{\sqrt{1-z^2}}\,\omega_a'\bigg)
  \partial_{\omega_a}\Lambda\big(z\omega+\sqrt{1-z^2}\omega'\big)\\
  &\qquad=z\sum_{a=1}^n\sum_{b=1}^n\Big[\gamma(|a-b|)-\gamma'(|a-b|)\Big]\int_{\Omega}\probd[\dd\omega]\int_{\Omega}\probd'[\dd\omega']\,
  \partial_{\omega_a}\partial_{\omega_b}\Lambda\big(z\omega+\sqrt{1-z^2}\omega'\big)\,,
\label{eq:comparison_2}
\end{align}
where the second equality is due to Gaussian integration by parts (see
\cite[Lemma~5.2]{Bis20notes}). Then, using
$|\partial_{\omega_a}\partial_{\omega_b}\Lambda|\le 2\Lambda$ for all
$a,b\in\{1,\ldots,n\}$ in \eqref{eq:comparison_2}, we conclude from
\eqref{eq:comparison_1} that
\begin{align*}
   \big|\log\Exd[\Lambda]-\log\Exd'[\Lambda]\big|\le \int_0^1\dd z\,
   2z\sum_{a=1}^n\sum_{b=1}^n\Big|\gamma(|a-b|)-\gamma'(|a-b|)\Big|\le  2n\sum_{c\in\N_0}\big|\gamma(c)-\gamma'(c)\big|\,,
\end{align*}
which concludes the proof.
\end{proof}

\subsection{The Gaussian hypercontractivity}

Finally, we introduce a general decorrelation inequality following
Nelson's hypercontractivity for Gaussian fields~\cite{Nel73}, which
will be used in the proof of Proposition~\ref{prop:largestjump}.

\begin{proposition}
  \label{prop:hypercontract}
  Suppose that there exists $\eta>0$ such that
  $\sum_{a=1}^n\sum_{b=1}^nu_a\gamma(|a-b|)u_b\ge\eta$ for all
  $n\in\N$ and $(u_1,\ldots,u_n)\in\mathbb{S}_n$.  Then for all
  $n\in\N$, integers $0\leq a_1<b_1\leq a_2<\cdots <b_l\le n$ with
  $l\ge 1$, and non-negative measurable functions
  $\phi_1:\R^{b_1-a_1}\to\R,\ldots,\phi_l:\R^{b_l-a_l}\to\R$ 
\begin{equation*}
  \int_\Omega\prod_{i=1}^l \phi_i(\go_{a_i+1},\ldots,\go_{b_i})\,\probd[\dd\omega] \le
  \prod_{i=1}^l \Bigg\{\int_\Omega\phi_i(\go_{a_i+1},\ldots,\go_{b_i})^\frac{2\bar\gamma}{\eta}\,\probd[\dd\omega]\Bigg\}^{\!\frac{\eta}{2\bar\gamma}}\,.
\end{equation*}
\end{proposition}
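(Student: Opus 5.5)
The plan is to reduce the statement to a finite-dimensional Gaussian Brascamp--Lieb-type inequality, using a matrix comparison between the covariance of the charges and its block-diagonal part. Fix $n$ and the blocks $B_i\ceq\{a_i+1,\ldots,b_i\}$, and set $N\ceq\sum_i(b_i-a_i)$. Let $\Gamma\in\R^{N\times N}$ be the covariance matrix of $(\omega_c)_{c\in\cup_iB_i}$, and let $D$ be its block-diagonal part, that is, $\Gamma$ with all entries coupling different blocks set to zero.

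The first step is the comparison itself. Any principal submatrix of the Toeplitz matrix $(\gamma(|a-b|))_{a,b}$ inherits the lower bound, so the hypothesis gives $\Gamma\ge\eta I$. For the upper bound, each diagonal block $D_i$ is itself a covariance matrix of consecutive charges. By \eqref{eq:bound_spectral_norm} and symmetry, as in the proof of Proposition~\ref{prop:concentration}, we get $\|D_i\|\le 2\bar\gamma$, hence $D\le 2\bar\gamma I$. Setting $\kappa\ceq\eta/(2\bar\gamma)\in(0,1]$, these combine to
\begin{equation*}
\Gamma\ \ge\ \kappa\, D
\end{equation*}
in the sense of quadratic forms. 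The proposition then becomes the claim: if $X\sim N(0,\Gamma)$ with $\Gamma\ge\kappa D$, then $\Exd[\prod_i\phi_i(X_{B_i})]\le\prod_i\|\phi_i(X_{B_i})\|_{1/\kappa}$. By monotone convergence we may assume each $\phi_i$ is bounded and bounded away from $0$, and write $\phi_i=\ee^{f_i}$ with $f_i$ bounded.

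The second step is to dualize through the Gibbs variational principle. For a law $P$ and bounded $F$,
\begin{equation*}
\log\int\ee^{F}\,\dd P=\sup_Q\Big\{\textstyle\int F\,\dd Q-H(Q|P)\Big\}.
\end{equation*}
Take $P=N(0,\Gamma)$, $F=\sum_if_i$, and let $Q_i$ denote the marginal of $Q$ on block $B_i$. Note that the marginal $P_i$ of $P$ on $B_i$ is $N(0,D_i)$. Suppose we have the entropy superadditivity
\begin{equation}\label{eq:plan_entropy}
H(Q|P)\ \ge\ \kappa\sum_{i=1}^l H(Q_i|P_i)\qquad\text{for all }Q.
\end{equation}
Then the supremum splits into $\sum_i\sup_{Q_i}\{\int f_i\,\dd Q_i-\kappa H(Q_i|P_i)\}$. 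Applying the variational principle to each term in reverse gives $\sum_i\kappa\log\int\ee^{f_i/\kappa}\,\dd P_i$, which is exactly the logarithm of the right-hand side with exponent $1/\kappa=2\bar\gamma/\eta$.

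The third step, and the main obstacle, is proving \eqref{eq:plan_entropy} from $\Gamma\ge\kappa D$. This is the dual form of the Gaussian Brascamp--Lieb inequality, in the Carlen--Cordero-Erausquin subadditivity-of-entropy formulation. I would first whiten: with $Z\ceq D^{-1/2}X$, the covariance $C\ceq D^{-1/2}\Gamma D^{-1/2}$ has identity diagonal blocks and satisfies $C\ge\kappa I$. Entropies are invariant under this blockwise linear bijection, so \eqref{eq:plan_entropy} reduces to the case $D=I$, $\Gamma=C\ge\kappa I$.

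For that reduced case I would try a semigroup or heat-flow interpolation. Evolve $Q$ under the Ornstein--Uhlenbeck flow with invariant law $N(0,C)$, and compare the dissipation of $H(Q_t|P)$ with that of $\kappa\sum_iH((Q_t)_i|P_i)$. This dissipation is expressed via Fisher informations, and the marginal Fisher informations are controlled via $C\ge\kappa I$ together with the identity blocks. The inequality then propagates from $t=\infty$, where both sides vanish, back to $t=0$.

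If that computation becomes unwieldy, the fallback is to invoke Lieb's theorem that Gaussian Brascamp--Lieb constants are attained by Gaussians. Under that theorem it suffices to check the inequality for centered Gaussian $\phi_i$, which is a determinant inequality: it follows from $C\ge\kappa I$ by comparing $\det(C+\cdot)$ with the block-diagonal matrix. This is where Nelson's hypercontractivity~\cite{Nel73} enters, as the two-block special case.
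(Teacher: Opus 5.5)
\textbf{There is a genuine gap: your matrix comparison points the wrong way.} The bound $\Gamma\ge\kappa D$ is true, but it is the hypothesis of a \emph{reverse} Hölder inequality. Your reduced claim, that $\Gamma\ge\kappa D$ implies $\Exd[\prod_i\phi_i(X_{B_i})]\le\prod_i\|\phi_i(X_{B_i})\|_{1/\kappa}$, is false.

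Take three scalar blocks with $\Gamma_{ii}=1$ and $\Gamma_{ij}=\frac14$ for $i\ne j$. Then $D=I$, and the eigenvalues of $\Gamma$ are $\frac34,\frac34,\frac32$, so $\Gamma\ge\frac34 D$ with $\kappa=\frac34$. For $\phi_i(x)=\ee^{tx}$ the two sides are:
\begin{itemize}
\item left side: $\ee^{\frac{t^2}{2}\sum_{i,j}\Gamma_{ij}}=\ee^{9t^2/4}$;
\item right side: $\prod_i\big(\Exd[\ee^{tX_i/\kappa}]\big)^{\kappa}=\ee^{3t^2/(2\kappa)}=\ee^{2t^2}$.
\end{itemize}
So the inequality fails, and truncating $\ee^{tx}$ gives bounded counterexamples as well.

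The same obstruction shows your entropy superadditivity cannot follow from a lower bound on $\Gamma$. Test it on the shifted laws $Q=N(m,\Gamma)$. Then $H(Q|P)=\frac12 m^\top\Gamma^{-1}m$ and $\sum_iH(Q_i|P_i)=\frac12 m^\top D^{-1}m$. The inequality therefore forces $\Gamma^{-1}\ge\kappa D^{-1}$, that is $\Gamma\le\kappa^{-1}D$, which is an \emph{upper} bound on $\Gamma$.

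Nelson's two-block case hides the error. With two scalar blocks and $D=I$ the eigenvalues are $1\pm\rho$, so $\Gamma\ge\kappa I$ happens to give $\Gamma\le(2-\kappa)I\le\kappa^{-1}I$. This breaks down from three blocks on. As a result, your third step and the Lieb-extremizer fallback are both aimed at a false statement.

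\textbf{The fix is to pair your two available bounds the other way round.}
\begin{itemize}
\item Apply \eqref{eq:bound_spectral_norm} to the \emph{full} matrix to get $\Gamma\le 2\bar\gamma I$.
\item Apply the hypothesis to each diagonal block, which is itself a covariance matrix of consecutive charges, to get $D\ge\eta I$.
\end{itemize}
Together these give $\Gamma\le\frac{2\bar\gamma}{\eta}D=\kappa^{-1}D$. This is exactly the paper's observation that $2\bar\gamma M-\eta\Gamma_n$ is positive semi-definite. The paper then concludes by citing the Gaussian Hölder-type inequality \cite[Theorem~1]{CDP15}, whose hypothesis is precisely this upper bound.

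With the corrected comparison, your Gibbs-variational dualization is sound. So is your whitening, now to $C\le\kappa^{-1}I$ with identity diagonal blocks. The semigroup or Gaussian-extremizer argument would then amount to reproving that cited theorem.
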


\begin{proof}[Proof of Proposition~\ref{prop:hypercontract}]
Given $n\in\N$ and integers $0\leq a_1<b_1\leq a_2<\cdots <b_l\le n$
with $l\ge 1$, it suffices to prove the proposition when $a_i=b_{i-1}$ for
all $i$, where $b_0\ceq0$, and $b_l=n$. Then, let $\Gamma_m$ denote
the covariance matrix of $m$ consecutive charges,
i.e.\ $(\Gamma_m)_{ab}=\gamma(|a-b|)$ for $a,b\in\{1,\ldots,m\}$, and
introduce the block diagonal matrix
\begin{equation*}
  M\ceq\begin{pmatrix}
  \Gamma_{b_1-b_0} &  &  & & \\
    &  & \ddots &   & \\
     & &  & \Gamma_{b_l-b_{l-1}} & 
  \end{pmatrix}\,.
\end{equation*}
We note that $M\in\R^{n\times n}$ and that $2\bar\gamma
M-\eta\Gamma_n$ is positive semi-definite. In fact the hypotheses on
the covariance function $\gamma$ imply
$\sum_{a=1}^n\sum_{b=1}^nu_a(M)_{ab}u_b\ge\eta$ and
$\sum_{a=1}^n\sum_{b=1}^nu_a(\Gamma)_{ab}u_b\le 2\bar\gamma$ for all
$(u_1,\ldots,u_n)\in\mathbb{S}_n$, the latter being an application
of \eqref{eq:bound_spectral_norm}.  Then we deduce from~\cite[Theorem
  1]{CDP15} that for all non-negative measurable functions
$\phi_1:\R^{b_1-b_0}\to\R,\ldots,\phi_l:\R^{b_l-b_{l-1}}\to\R$
\begin{align*}
  \int_\Omega\prod_{i=1}^l\phi_i(\omega_{b_{i-1}+1},\ldots,\omega_{b_i})\,\bbP[\dd\omega]&\le
  \Bigg\{\prod_{i=1}^l\int_\Omega \phi_i(\omega_{b_{i-1}+1},\ldots,\omega_{b_i})^{\frac{2\bar\gamma}{\eta}}\,\bbP[\dd\omega]\Bigg\}^{\!\frac{\eta}{2\bar\gamma}}\,.
  \qedhere
\end{align*}
\end{proof}

\section{The alternative free energy}
\label{sec:mu}

In this section we demonstrate Proposition~\ref{prop:mu}, whose proof
relies on the general results established in the previous section,
together with two additional more specialized lemmas. The first of the
latter states bounds on the limits superior and inferior
\begin{equation*}
  \mu_+(h)\ceq\limsup_{n\uparrow\infty}-\frac{1}{n}\log\bbE\bigg[\frac{1}{Z_{n,h}^{\textbf{-}}}\bigg]\quad\text{and}\quad
  \mu_-(h)\ceq\liminf_{n\uparrow\infty}-\frac{1}{n}\log\bbE\bigg[\frac{1}{Z_{n,h}^{\textbf{-}}}\bigg]\,.
\end{equation*}
The second one shows that they are equal when the environment has
finite range correlations: this is one of the most intricate parts of
the section.  We these lemmas at hand, we will then able to prove
Proposition~\ref{prop:mu} by means of a truncation argument.

\begin{lemma}
  \label{lem:mu:bounds}
For all $h\in\R$
\begin{equation*}
\min\bigg\{\frac{f(h)}{3},\frac{f(h)^2}{9\bar\gga}\bigg\} \leq \mu_-(h) \leq \mu_+(h) \leq f(h)\,.
\end{equation*}
\end{lemma}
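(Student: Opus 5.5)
The middle inequality is trivial, so there are two bounds to prove. The plan is to obtain the upper bound from Jensen's inequality and the lower bound from the concentration inequality of Proposition~\ref{prop:concentration}.

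\emph{Upper bound $\mu_+(h)\le f(h)$.} By Jensen's inequality applied to the convex function $x\mapsto \ee^{-x}$,
\begin{equation*}
\bbE\big[1/Z^{\textbf{-}}_{n,h}\big]\ge \exp\big(-\bbE[\log Z^{\textbf{-}}_{n,h}]\big).
\end{equation*}
Hence
\begin{equation*}
-\frac1n\log\bbE\big[1/Z^{\textbf{-}}_{n,h}\big]\le \frac1n\bbE[\log Z_{n,h}]-\frac{h}{n}.
\end{equation*}
Here we used $\bbE[\omega_n]=0$ and $\log Z^{\textbf{-}}_{n,h}=\log Z_{n,h}-h-\omega_n$. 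The right-hand side tends to $f(h)$, so taking the $\limsup$ gives the claim.

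\emph{Lower bound.} If $f(h)=0$ there is nothing to prove, since $\mu_-\ge0$ is not even needed: the left-hand side is $0$. We should nonetheless check that $\mu_-(h)\ge 0$ when $f(h)=0$. This follows from $\bbE[1/Z^{\textbf{-}}_{n,h}]\le \bbE[\ee^{-\log Z^{\textbf{-}}}]$ combined with a subexponential bound. Since $Z^{\textbf{-}}_{n,h}\ge p(n)$, we have $1/Z^{\textbf{-}}_{n,h}\le 1/p(n)$, which is subexponential because $p$ is regularly varying. So assume $f(h)>0$.

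Set $m_n\ceq\bbE[\log Z^{\textbf{-}}_{n,h}]$. Then $m_n/n\to f(h)$, because the difference from $\bbE\log Z_{n,h}$ is the constant $h$. Write
\begin{equation*}
\bbE\big[1/Z^{\textbf{-}}_{n,h}\big]=\int_0^\infty \bbP\big[-\log Z^{\textbf{-}}_{n,h}>t\big]\,\ee^{t}\,\dd t+\text{(contribution of }t\le0\text{)},
\end{equation*}
or equivalently use the layer-cake formula $\bbE[\ee^{-Y}]=\int_{\R}\ee^{-s}\,\bbP[Y<s]\,\dd s$ with $Y=\log Z^{\textbf{-}}_{n,h}$. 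Split at $s=m_n$. The part $s\ge m_n$ contributes at most $\ee^{-m_n}$. For $s<m_n$, Proposition~\ref{prop:concentration} gives
\begin{equation*}
\bbP[Y<s]\le 2\ee^{-(m_n-s)^2/(4n\bar\gga)}.
\end{equation*}
This yields, with $\eps=m_n-s$,
\begin{equation*}
\bbE\big[1/Z^{\textbf{-}}_{n,h}\big]\le \ee^{-m_n}+2\ee^{-m_n}\int_0^\infty \ee^{\eps-\eps^2/(4n\bar\gga)}\,\dd\eps\le C\sqrt{n\bar\gga}\,\ee^{-m_n+n\bar\gga}.
\end{equation*}
This bound is useful only when $\bar\gga<f$, so instead we restrict the range of $\eps$. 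We also use the trivial bound $Y\ge \log p(n)$, so that $\eps\le m_n-\log p(n)=m_n+o(n)$. The exponent $\eps-\eps^2/(4n\bar\gga)-m_n$ is then maximized over $\eps\in[0,m_n+o(n)]$.

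Writing $\eps=xn$ and $m_n\approx fn$, we maximize $x-x^2/(4\bar\gga)-f$ over $x\in[0,f]$. If $f\le 2\bar\gga$, the maximum is at $x=f$ and equals $-f^2/(4\bar\gga)$. Otherwise it is at $x=2\bar\gga$ and equals $\bar\gga-f\le -f/2$.

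Thus
\begin{equation*}
\mu_-(h)\ge\min\{f/2,\,f^2/(4\bar\gga)\},
\end{equation*}
which is stronger than the stated bound with constants $3$ and $9$. The looser constants presumably absorb slack: for example, bounding $\bbP[Y<s]$ by the two-sided tail, and the lack of exact equality $m_n=fn$. To handle the latter, I would fix $\delta>0$ and use $m_n\ge (f-\delta)n$ for large $n$, noting that the exponent is monotone in $m_n$, and then let $\delta\downarrow0$. The main care point is the truncation of $\eps$ via $Z^{\textbf{-}}_{n,h}\ge p(n)\ee^{0}$. Since $Z^{\textbf{-}}$ excludes the charge at $n$, the single-jump trajectory contributes exactly $p(n)$, so this lower bound holds deterministically.
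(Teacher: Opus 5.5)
Your proof is correct. The upper bound coincides with the paper's: Jensen's inequality plus $\bbE[\omega_n]=0$.

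For the lower bound you use the same two ingredients as the paper, namely Proposition~\ref{prop:concentration} and the deterministic bound $Z^{\textbf{-}}_{n,h}\ge p(n)$, but you combine them differently. The paper makes a single split at the deviation level $\frac{2f(h)}{3}n$:
\begin{itemize}
\item on the bad event it uses $p(n)/Z^{\textbf{-}}_{n,h}=\prob_{n,h,\omega}[T_1=n]\le 1$;
\item on the complement it uses $1/Z^{\textbf{-}}_{n,h}\le \ee^{\frac{2f(h)}{3}n-\bbE[\log Z^{\textbf{-}}_{n,h}]}$.
\end{itemize}
This gives $\min\{f/3,f^2/(9\bar\gamma)\}$ in two lines.

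Your layer-cake formula instead integrates the Gaussian tail over every deviation level $\eps\in[0,m_n-\log p(n)]$. It costs a small optimization, plus a harmless polynomial factor $m_n-\log p(n)=O(n)$ from bounding the integral by its length times the maximum. In return it gives the better rate $f(h)-\max_{x\in[0,f(h)]}\{x-x^2/(4\bar\gamma)\}\ge\min\{f/2,f^2/(4\bar\gamma)\}$.

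So the constants $3$ and $9$ do not come from using the two-sided tail or from $m_n\neq f(h)n$, as you guess. They are the cost of the paper's single-threshold split. That cruder split is enough for the paper, because downstream only the positivity of $\mu$ for $h>h_c$ is used.
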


\begin{proof}
The upper bound on $\mu_+(h)$ is a direct consequence of Jensen's
inequality and the fact that the charges are centered: for all
$n\in\N$ and $h\in\R$
\begin{equation*}
-\log \bbE\bigg[\frac{1}{Z_{n,h}^{\textbf{-}}}\bigg] \leq \bbE\big[\log Z_{n,h}^{\textbf{-}}\big] = \bbE\big[\log Z_{n,h}\big]-h\,.
\end{equation*}
Then, dividing by $n$ and letting $n\uparrow\infty$, this yields that
$\mu_+(h)\leq f(h)$ for any $h\in\R$.

\smallskip

Regarding the lower bound on $\mu_-(h)$, it is a consequence of the
concentration inequality from Proposition~\ref{prop:concentration}. In
fact for every $n\in\N$, $h\in\R$, and $\go\in\Omega$ one has
\begin{equation*}
\frac{p(n)}{Z_{n,h}^{\textbf{-}}(\go)} = \prob_{n,h,\go}[T_1=n] 
\leq \ind_{\big\{\log Z_{n,h}^{\textbf{-}}(\go) - \bbE[\log Z_{n,h}^{\textbf{-}}] < -\frac{2f(h)}{3}n\big\}} +  p(n)\ee^{\frac{2f(h)}{3}n-\bbE[\log Z_{n,h}^{\textbf{-}}]}\,,
\end{equation*}
so integrating with respect to $\bbP[\dd \go]$ and invoking
Proposition~\ref{prop:concentration} one obtains
\begin{align*}
  p(n)\,\bbE\bigg[\frac{1}{Z_{n,h}^{\textbf{-}}}\bigg] &\leq 2\ee^{-\frac{f(h)^2}{9\bar\gga}n} + p(n) \ee^{\frac{2f(h)}{3}n-\bbE[\log Z_{n,h}^{\textbf{-}}]}\\
  &=2\ee^{-\frac{f(h)^2}{9\bar\gga}n} + p(n) \ee^{\frac{2f(h)}{3}n-\bbE[\log Z_{n,h}]+h}\,.
\end{align*}
Taking the logarithm, dividing by $n$ and letting $n\uparrow\infty$, this finally yields
\begin{equation*}
\mu_-(h)\geq \min\bigg\{\frac{f(h)}{3},\frac{f(h)^2}{9\bar\gga}\bigg\}
\end{equation*}
since $p(n)$ decays polynomially fast as $n\uparrow\infty$.
\end{proof}

We now turn to the case of finite-range correlations.
\begin{lemma}
  \label{lem:mu:convergencesupercrit}
  Suppose that the covariance function $\gamma$ satisfies
  $\gamma(a)=0$ for all $a>r$ with some $r\in\N$. Then
  $\mu_+(h)=\mu_-(h)$ for all $h\in\Rl$.
\end{lemma}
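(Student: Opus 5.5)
The plan is to prove that $n\mapsto\log\Exd[1/Z^{\textbf{-}}_{n,h}]$ is subadditive up to a bounded error and a fixed index shift, and then conclude with Fekete's lemma. Set $a_n\ceq\Exd[1/Z^{\textbf{-}}_{n,h}]$. Note that $1/Z^{\textbf{-}}_{n,h}(\omega)$ depends only on $\omega_1,\ldots,\omega_{n-1}$, because the weight of site $n$ has been removed. Fix the gap $k\ceq r+2$.

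\textbf{Step 1 (a quenched lower bound).} Restrict $\Ex$ to trajectories that touch $n$, then make a single jump of length $k$, and then touch $n+k$. Factorizing with the renewal property gives
\begin{equation*}
Z^{\textbf{-}}_{n+k+m,h}(\omega)\ \ge\ Z^{\textbf{-}}_{n,h}(\omega)\,\ee^{h+\omega_n}\,p(k)\,\ee^{h+\omega_{n+k}}\,Z^{\textbf{-}}_{m,h}(\vartheta^{n+k}\omega)\,.
\end{equation*}
Hence
\begin{equation*}
\frac{1}{Z^{\textbf{-}}_{n+k+m,h}}\le \frac{\ee^{-2h}}{p(k)}\,\ee^{-\omega_n-\omega_{n+k}}\,\Lambda_1(\omega)\Lambda_2(\omega)\,,
\end{equation*}
where $\Lambda_1\ceq 1/Z^{\textbf{-}}_{n,h}$ depends on $\omega_1,\ldots,\omega_{n-1}$, and $\Lambda_2\ceq 1/Z^{\textbf{-}}_{m,h}(\vartheta^{n+k}\cdot)$ depends on $\omega_{n+k+1},\ldots,\omega_{n+k+m-1}$.

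\textbf{Step 2 (removing the boundary exponentials).} This is the main obstacle, because $\omega_n$ and $\omega_{n+k}$ are correlated with both blocks. I will use the Cameron--Martin (Girsanov) formula for Gaussian vectors:
\begin{equation*}
\Exd\big[\ee^{-\omega_n-\omega_{n+k}}F(\omega)\big]=\ee^{\gamma(0)+\gamma(k)}\,\Exd\big[F(\omega-s)\big],\qquad s_a\ceq\gamma(|a-n|)+\gamma(|a-n-k|)\,.
\end{equation*}
Here $\gamma(k)=0$ since $k>r$. The shift $s$ is nonzero on at most $2(2r+1)$ sites, and $|s_a|\le\gamma(0)$ there, since $s_a$ has at most one nonzero term. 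Shifting the charges by $s$ therefore changes each of $Z^{\textbf{-}}_{n,h}$ and $Z^{\textbf{-}}_{m,h}(\vartheta^{n+k}\cdot)$ by at most a factor $\ee^{(2r+1)\gamma(0)}$. After the shift the two blocks are separated by a distance $k+2>r$, so they are independent under $\probd$. Stationarity then yields
\begin{equation*}
a_{n+k+m}\le C\,a_na_m\,,
\end{equation*}
with $C=C(h,r,\gamma(0))<\infty$ independent of $n$ and $m$.

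\textbf{Step 3 (Fekete).} Let $u_n\ceq\log(Ca_n)$, so that $u_{n+m+k}\le u_n+u_m$. Then $v_n\ceq u_{n-k}$ for $n>k$ is subadditive, since $v_{n+m}=u_{(n-k)+(m-k)+k}\le v_n+v_m$. Fekete's lemma gives the existence of $\lim v_n/n$ in $[-\infty,\infty)$. This limit is finite: by Jensen, $-\log a_n\le\Exd[\log Z^{\textbf{-}}_{n,h}]\le n(|h|+\textrm{const})$, because $\Exd[\log Z_{n,h}]\le n\,\Exd[(h+\omega_1)^+]$. Since $v_n/n$ and $(\log a_n)/n$ have the same limit, $-\frac1n\log a_n$ converges, that is, $\mu_+(h)=\mu_-(h)$.

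The delicate points are checking the Girsanov step, namely that the shift stays localized within distance $r$ of $n$ and $n+k$, and bookkeeping the sites that each block depends on so that independence holds at gap $k=r+2$.
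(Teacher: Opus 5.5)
Your proof is correct, and it takes a genuinely different and shorter route than the paper's.

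\textbf{The paper's route.} The paper handles $h\le h_c$ separately through Lemma~\ref{lem:mu:bounds}. For $h>h_c$ it absorbs the boundary factor into a modified quantity,
$\nu_n=-\log\int_\Omega\frac{\ee^{-\omega_0}}{Z^{\textbf{-}}_{n,h}(\omega)}\,\probd[\dd\omega]$.
It then pins at $l=m-r$ and at $m$, and factorizes using independence at distance larger than $r$. After that it must bring $\ee^{-h-\omega_0}/Z_{l,h}$ back to $\ee^{-\omega_0}/Z^{\textbf{-}}_{m,h}$, which it does with the local bound \eqref{eq:local_bound} and a truncation of the charges at level $\sqrt{12\gamma(0)f(h)m}$. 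This costs three things:
\begin{itemize}
\item superadditivity holds only approximately, with an $O(\sqrt{\min\{m,n\}})$ error;
\item it needs $f(h)>0$, so that the truncation remainder is negligible against the main term;
\item it needs a final comparison of $\nu_n$ with $-\log\Exd[1/Z^{\textbf{-}}_{n,h}]$ via Gaussian tail bounds on $\omega_0$.
\end{itemize}

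\textbf{Your route.} Your Cameron--Martin tilt turns $\ee^{-\omega_n-\omega_{n+k}}$ into a deterministic shift supported within distance $r$ of the two pinned sites. That shift costs only a constant factor, so you get exact submultiplicativity $a_{n+m+k}\le C\,a_na_m$ for every $h\in\R$. You need no case distinction, no use of $f$, no local bound, and no truncation.

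\textbf{What each buys.} Yours is simpler and uniform in $h$. The paper's argument uses only $r$-dependence plus one-site Gaussian tails, never the joint Gaussian structure. It would therefore carry over to non-Gaussian environments with finite-range dependence, whereas your step relies essentially on the Gaussian change of measure.

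\textbf{A harmless slip.} When $r\ge2$, the sets $\{|a-n|\le r\}$ and $\{|a-n-k|\le r\}$ overlap on $\{n+2,\ldots,n+r\}$, so $s_a$ can have two nonzero terms there. Those sites lie in the gap, however. On the sites your blocks actually depend on, $\{1,\ldots,n-1\}$ and $\{n+k+1,\ldots,n+k+m-1\}$, only one term survives. In any case $|s_a|\le 2\gamma(0)$ always holds, so at worst only the constant $C$ changes.
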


Following \cite{GZ25concentration}, we recall that the properties of
slowly varying functions (see~\cite[Theorem~1.2.1 and
  Proposition~1.3.6]{BGT89}) imply that there exists a constant
$\xi>0$ such that for all $t,\tau\in\N$
\begin{equation}\label{eq:defxi}
\frac{p(t+\tau)}{p(t)p(\tau)} \leq \xi \min\{t^\xi, \tau^\xi\}\,.
\end{equation}
Replacing $\xi$ by a larger value if necessary, we may assume that
$\xi$ also fulfils $p(t)\ge(1+t)^{-\xi}$ for all $t\in\N$. As
explained in Remark 2.2 of \cite{GZ25concentration}, the number $\xi$
provides the bound
\begin{equation}
    \Ex_{n,h,\omega}[X_a]\ge \frac{1}{1+\xi\ee^{-h-\omega_a}\min\{a^\xi,(n-a)^\xi\}}\,,
\label{eq:local_bound}
\end{equation}
which holds for all $n\in\N$, $h\in\Rl$,
$\omega=(\omega_b)_{b\in\N_0}\in\Omega$, and
$a\in\{0,\ldots,n\}$. This bound plays a key role in the proof of
Lemma~\ref{lem:mu:convergencesupercrit}, together with the renewal
property of the pinning model.

\begin{proof}[Proof of Lemma~\ref{lem:mu:convergencesupercrit}]
When $h\leq h_c$ one has $f(h)=0$, so Lemma~\ref{lem:mu:bounds}
implies that $\mu_+(h)=\mu_-(h)=0$. Most of the work is then devoted
to the case $h>h_c$.  Fix $h>h_c$ and, in order to lighten upcoming
formulas, set for all $n\in\N$
\begin{equation}
\nu_n\ceq -\log\int_\Omega\frac{\ee^{-\omega_0}}{Z_{n,h}^{\textbf{-}}(\omega)}\,\probd[\dd\omega]\,.
\label{eq:def_nu_mu}
\end{equation}
We shall show that there exist $n_o\in\N$ and $C>0$ such that for
every $m,n\in\N$ not smaller than $n_o$
\begin{equation}
   \nu_{m+n}\ge \nu_m+\nu_n-C-C\sqrt{\min\{m,n\}}\,.
\label{eq:muconv:main}
\end{equation}
Using the bound $\sqrt{\min\{m,n\}}\le
2(\sqrt{m}+\sqrt{n}-\sqrt{m+n})$, we then realize that the sequence
$(\nu_n-C-2C\sqrt{n})_{n\in\N}$ is eventually
superadditive. Consequently, Fekete's lemma assures us that the limit
$\lim_{n\uparrow\infty}\nu_n/n\eqc q$ exists. We claim that
$\mu_+(h)\le q$ and $q\le\mu_-(h)$, from which we deduce that
$\mu_+(h)=\mu_-(h)=q$, thus proving the lemma.  These bounds are
obtained as follows.  Noting that
$Z_{n,h}^{\textbf{-}}(\omega)\ge\Ex[\ee^{\sum_{a=1}^{n-1}(h+\omega_a)X_a}\prod_{k=1}^{n-1}(1-X_k)X_n]=p(n)$,
we can write for any $n\in\N$ and $\zeta\in\R$
\begin{align*}
  \ee^{-\zeta}\int_\Omega\frac{\ee^{-\omega_0}}{Z_{n,h}^{\textbf{-}}(\omega)}\,\probd[\dd\omega]&=
  \int_\Omega\ind_{\{\omega_0>-\zeta\}}\frac{\ee^{-\zeta-\omega_0}}{Z_{n,h}^{\textbf{-}}(\omega)}\,\probd[\dd\omega]+
  \int_\Omega\ind_{\{\omega_0\le-\zeta\}}\frac{\ee^{-\zeta-\omega_0}}{Z_{n,h}^{\textbf{-}}(\omega)}\,\probd[\dd\omega]\\
  &\le\Exd\bigg[\frac{1}{Z_{n,h}^{\textbf{-}}}\bigg]+
  \frac{1}{p(n)}\int_\Omega\ind_{\{\omega_0\le-\zeta\}}\ee^{-\zeta-\omega_0}\,\probd[\dd\omega]
\end{align*}
and
\begin{align*}
  \ee^{\zeta}\int_\Omega\frac{\ee^{-\omega_0}}{Z_{n,h}^{\textbf{-}}(\omega)}\,\probd[\dd\omega]&\ge
  \int_\Omega\ind_{\{\omega_0\le \zeta\}}\frac{\ee^{\zeta-\omega_0}}{Z_{n,h}^{\textbf{-}}(\omega)}\,\probd[\dd\omega]\\
  &\ge\Exd\bigg[\frac{1}{Z_{n,h}^{\textbf{-}}}\bigg]-\frac{1}{p(n)}\int_\Omega\ind_{\{\omega_0>\zeta\}}\,\probd[\dd\omega]\,.
\end{align*}
Then, choosing $\zeta= 2\sqrt{\gamma(0)f(h)n}$, a standard Gaussian
tail estimate yields
\begin{equation}
   \ee^{-2\sqrt{\gamma(0)f(h)n}}\int_\Omega\frac{\ee^{-\omega_0}}{Z_{n,h}^{\textbf{-}}(\omega)}\,\probd[\dd\omega]\le\Exd\bigg[\frac{1}{Z_{n,h}^{\textbf{-}}}\bigg]+
  \frac{\ee^{-2f(h)n}}{p(n)}
\label{eq:mu:convergencesupercrit_0}
\end{equation}
and
\begin{equation}
  \ee^{ 2\sqrt{\gamma(0)f(h)n}}\int_\Omega\frac{\ee^{-\omega_0}}{Z_{n,h}^{\textbf{-}}(\omega)}\,\probd[\dd\omega]\ge\Exd\bigg[\frac{1}{Z_{n,h}^{\textbf{-}}}\bigg]-
  \frac{\ee^{-2f(h)n}}{p(n)}\,.
\label{eq:mu:convergencesupercrit_00}
\end{equation}
Since $\mu_+(h)\le f(h)$ by Lemma~\ref{lem:mu:bounds} and $p(n)$
decays polynomially fast as $n\uparrow\infty$, $\ee^{-2f(h)n}/p(n)$
decays faster than $\Ex[1/Z_{n,h}^{\textbf{-}}]$ as
$n\uparrow\infty$. So, taking the logarithm in
\eqref{eq:mu:convergencesupercrit_0}, dividing by $n$, and then
letting $n\uparrow\infty$, we find $\mu_+(h)\le q$. Similarly, from
\eqref{eq:mu:convergencesupercrit_00} we get $q\le\mu_-(h)$.

\smallskip

Let us now verify \eqref{eq:muconv:main}.  Jensen's inequality,
together with the fact that the charges are centered, gives for all
$n\in\N$
\begin{equation*}
-\log \int_\Omega\frac{\ee^{-\omega_0}}{Z_{n,h}^{\textbf{-}}(\omega)}\,\probd[\dd\omega] \leq \bbE\big[\log Z_{n,h}^{\textbf{-}}\big] = \bbE\big[\log Z_{n,h}\big]-h\,,
\end{equation*}
so we can state that there exists a number $n_o>r$ such that for all
integers $n\ge n_o$
\begin{equation}
(2r)^{\frac{1}{3}}\ee^{-2h+3\gamma(0)}(1+n)^\xi\ee^{-2f(h) n}\le
  \int_\Omega\frac{\ee^{-\omega_0}}{Z_{n,h}^{\textbf{-}}(\omega)}\,\probd[\dd\omega]\,.
\label{eq:tech_1}
\end{equation}
Pick $m,n\in\N$ such that $r<n_o\le m\le n$ and define
\begin{equation*}
  C\ceq \max\Big\{(r+1)|h|+\log(2+\xi r^\xi),r\sqrt{12\gamma(0)f(h)}\Big\}  \,.
\end{equation*}
Let us prove that $\nu_{m+n}\ge \nu_m+\nu_n-C-C\sqrt{m}$. Set $l\ceq m-r$
for brevity. The renewal property of the pinning model (see~\cite[Lemma~2.1]{GZ25concentration}), which we
invoke to obtain the second equality in the following chain of
equations, allows us to write down for all
$\omega=(\omega)_{a\in\N_0}\in\Omega$ the bound
\begin{align}
  \nonumber
  Z_{m+n,h}^{\textbf{-}}(\omega)&=\Ex\big[\ee^{\sum_{a=1}^{m+n-1}(h+\omega_a)X_a}X_{m+n}\big]\\
  \nonumber
  &\ge\Ex\big[\ee^{\sum_{a=1}^{m+n-1}(h+\omega_a)X_a}X_lX_mX_{m+n}\big]\\
  \nonumber
  &=\Ex\big[\ee^{\sum_{a=1}^l(h+\omega_a)X_a}X_l\big]\Ex\big[\ee^{\sum_{a=1}^{m-l}(h+\omega_{l+a})X_a}X_{m-l}\big]\Ex\big[\ee^{\sum_{a=1}^{n-1}(h+\omega_{m+a})X_a}X_n\big]\\
  &=Z_{l,h}(\omega) Z_{m-l,h}^{\textbf{-}}(\vartheta^l\omega)\ee^{h+\omega_m} Z_{n,h}^{\textbf{-}}(\vartheta^m\omega)\,.
\label{eq:mu_aux_1}
\end{align}
Using $Z_{m-l,h}^{\textbf{-}}(\vartheta^l\omega)\ge p(m-l)=p(r)$, we
can recast \eqref{eq:mu_aux_1} as
\begin{equation*}
  \frac{\ee^{-\omega_0}}{Z_{m+n,h}^{\textbf{-}}(\omega)}\le\frac{1}{p(r)}\frac{\ee^{-h-\omega_0}}{ Z_{l,h}(\omega)}
  \frac{\ee^{-\omega_m}}{Z_{n,h}^{\textbf{-}}(\vartheta^m\omega)}\,.
\label{eq:mu_aux_2}
\end{equation*}
Integrating the latter with respect to $\probd[\dd\omega]$, the fact
that the charges at sites $0,\ldots,l-1$ are independent of those at
sites $m,\ldots,m+n-1$, because $m+1-l>r$, allows us to reach the bound
\begin{equation}
  \int_\Omega\frac{\ee^{-\omega_0}}{Z_{m+n,h}^{\textbf{-}}(\omega)}\,\probd[\dd\omega]\le
  \frac{1}{p(r)}\int_\Omega\frac{\ee^{-h-\omega_0}}{ Z_{l,h}(\omega)}\,\probd[\dd\omega]
  \int_\Omega\frac{\ee^{-\omega_0}}{Z_{n,h}^{\textbf{-}}(\omega)}\,\probd[\dd\omega]\,.
\label{eq:step_1}
\end{equation}
In view of \eqref{eq:def_nu_mu}, in order to prove that $\nu_{m+n}\ge
\nu_m+\nu_n-C-C\sqrt{m}$ it remains to show that
\begin{equation}
 \frac{1}{p(r)}\int_\Omega\frac{\ee^{-h-\omega_0}}{ Z_{l,h}(\omega)}\,\probd[\dd\omega]
  \le \ee^{C+C\sqrt{m}} \int_\Omega\frac{\ee^{-\omega_0}}{Z_{m,h}^{\textbf{-}}(\omega)}\,\probd[\dd\omega]\,.
\label{eq:step_5}
\end{equation}

Starting from \eqref{eq:local_bound} and using the renewal property of
the pinning model we can state that for all
$\omega=(\omega_a)_{a\in\N_0}\in\Omega$
\begin{align*}
  \frac{1}{1+\xi\ee^{-h-\omega_l}(m-l)^\xi}\le \Ex_{m,h,\omega}[X_l]&=\frac{\Ex[\ee^{\sum_{a=1}^{m-1}(h+\omega_a)X_a}X_lX_m]}{Z_{m,h}^{\textbf{-}}(\omega)}\\
  &=\frac{\Ex[\ee^{\sum_{a=1}^l(h+\omega_a)X_a}X_l]\Ex[\ee^{\sum_{a=l+1}^{m-1}(h+\omega_a)X_{a-l}}X_{m-l}]}{Z_{m,h}^{\textbf{-}}(\omega)}\\
  &\le\frac{Z_{l,h}(\omega)\,\ee^{\sum_{a=l+1}^{m-1}(|h|+|\omega_a|)}}{Z_{m,h}^{\textbf{-}}(\omega)}\,.
\end{align*}
Then, recalling that $m-l=r$, we see that
\begin{align*}
  \frac{1}{Z_{l,h}(\omega)}\le \big(1+\xi r^\xi\ee^{-h-\omega_l}\big)\frac{\ee^{\sum_{a=l+1}^{m-1}(|h|+|\omega_a|)}}{Z_{m,h}^{\textbf{-}}(\omega)}
  &\le\big(1+\xi r^\xi\big)\ee^{r|h|+\sum_{a=l}^{m-1}|\omega_a|}\frac{1}{Z_{m,h}^{\textbf{-}}(\omega)}\,.
\end{align*}
This bound is useful only for certain values of the charges, while for
other values we observe that $Z_{l,h}(\omega)\ge
p(l)\ee^{h+\omega_l}$. So, truncating the charges we write
\begin{align*}
  \frac{1}{Z_{l,h}(\omega)}&\le
  \frac{\mathds{1}_{\{\max\{|\omega_l|,\ldots,|\omega_{m-1}|\}\le \sqrt{12\gamma(0)f(h) m}\}}}{Z_{l,h}(\omega)}
  +\frac{\mathds{1}_{\{\max\{|\omega_l|,\ldots,|\omega_{m-1}|\}>\sqrt{12\gamma(0)f(h) m}\}}}{p(l)\ee^{h+\omega_l}} \\
  &\le  \big(1+\xi r^\xi\big)\ee^{r|h|+r\sqrt{12\gamma(0)f(h) m}}\,\frac{1}{Z_{m,h}^{\textbf{-}}(\omega)}
  +\frac{\mathds{1}_{\{\max\{|\omega_l|,\ldots,|\omega_{m-1}|\}>\sqrt{12\gamma(0)f(h) m}\}}}{p(l)\ee^{h+\omega_l}}\,.
\end{align*}
Integrating this estimate with respect to $\probd[\dd\omega]$ and
invoking H\"older's inequality for three factors we thus find
\begin{align*}
  \int_\Omega\frac{\ee^{-h-\omega_0}}{ Z_{l,h}(\omega)}\,\probd[\dd\omega]&\le \big(1+\xi r^\xi\big)\ee^{(r+1)|h|+r\sqrt{12\gamma(0)f(h) m}}
  \int_\Omega\frac{\ee^{-\omega_0}}{Z_{m,h}^{\textbf{-}}(\omega)}\,\probd[\dd\omega]\\
  &\quad+\frac{\ee^{-2h}}{p(l)}\Bigg\{\bigg(\int_\Omega\ee^{-3\omega_0}\,\probd[\dd\omega]\bigg)^{\!2}
  \int_\Omega\mathds{1}_{\big\{\max\{|\omega_1|,\ldots,|\omega_r|\}>\sqrt{12\gamma(0)f(h)m}\big\}}\probd[\dd\omega]\Bigg\}^{\!\frac{1}{3}}\,.
\end{align*}
We have
$\int_\Omega\ee^{-3\omega_0}\,\probd[\dd\omega]=\ee^{\frac{9}{2}\gamma(0)}$
and, combining a union bound with standard Gaussian tail estimates, we
see that
\begin{equation*}
  \int_\Omega\ind_{\big\{\max\{|\omega_1|,\ldots,|\omega_r|\}>\sqrt{12\gamma(0)f(h)m}\big\}}\probd[\dd\omega]\le r \int_\Omega\ind_{\big\{|\omega_0|>\sqrt{12\gamma(0)f(h)m}\big\}}\probd[\dd\omega]
  \le 2r\ee^{-6f(h)m}\,.
  \end{equation*}
Putting everything together we finally observe that
\begin{align*}
  \int_\Omega\frac{\ee^{-h-\omega_0}}{ Z_{l,h}(\omega)}\,\probd[\dd\omega]&\le \big(1+\xi r^\xi\big)\ee^{(r+1)|h|+r\sqrt{12\gamma(0)f(h) m}}
  \int_\Omega\frac{\ee^{-\omega_0}}{Z_{m,h}^{\textbf{-}}(\omega)}\,\probd[\dd\omega]\\
  &\qquad\qquad\qquad\qquad\qquad\qquad\qquad\qquad\quad+\frac{(2r)^{\frac{1}{3}}\ee^{-2h+3\gamma(0)}}{p(l)}\,\ee^{-2f(h) m}
\end{align*}
with
\begin{equation}
  \frac{(2r)^{\frac{1}{3}}\ee^{-2h+3\gamma(0)}}{p(l)}\,\ee^{-2f(h) m}\le (2r)^{\frac{1}{3}}\ee^{-2h+3\gamma(0)}(1+m)^\xi\ee^{-2f(h) m}\le
  \int_\Omega\frac{\ee^{-\omega_0}}{Z_{m,h}^{\textbf{-}}(\omega)}\,\probd[\dd\omega]
\label{eq:step_8}
\end{equation}
thanks to $p(t)\ge(1+t)^{-\xi}$ for all $t\in\N$ and to
\eqref{eq:tech_1}. This demonstrates that
\begin{align*}
  \int_\Omega\frac{\ee^{-h-\omega_0}}{ Z_{l,h}(\omega)}\,\probd[\dd\omega]&\le \big(2+\xi r^\xi\big)\ee^{(r+1)|h|+r\sqrt{12\gamma(0)f(h) m}}
  \int_\Omega\frac{\ee^{-\omega_0}}{Z_{m,h}^{\textbf{-}}(\omega)}\,\probd[\dd\omega]\,,
\end{align*}
which in turn implies \eqref{eq:step_5} because of our choice of the constant $C$.
\end{proof}

With Lemmas~\ref{lem:mu:bounds},~\ref{lem:mu:convergencesupercrit},
and~\ref{lem:comparison} at hand, we are now ready to prove
Proposition~\ref{prop:mu} by a truncation argument.

\begin{proof}[Proof of Proposition~\ref{prop:mu}]
Let $\probd$ be an arbitrary probability measure on
$(\Omega,\mathcal{F})$ that satisfies
Assumption~\ref{assum:gauss}. For $r\in\N$ define the real function
$\gamma_r$ on $\N_0$ by
$\gamma_r(0)\ceq\gamma(0)+2\sum_{a=r+1}^\infty|\gamma(a)|$ and
$\gamma_r(a)\ceq\ind_{\{a\le r\}}\gamma(a)$ for $a\in\N$, where
$\gamma$ is the covariance function associated with $\probd$. We claim
that $\gamma_r$ is a covariance function,
i.e.\ $\sum_{a=1}^n\sum_{b=1}^n u_a\gamma_r(|a-b|)u_b\ge 0$ for all
$n\in\N$ and $u=(u_1,\ldots,u_n)\in\Rl^n$, so in particular there
exists a law $\probd_r$ on $(\Omega,\mathcal{F})$ with covariance
function $\gamma_r$ that satisfies Assumption~\ref{assum:gauss}. Indeed, recalling that $\sum_{a=1}^n\sum_{b=1}^n u_a\gamma(|a-b|)u_b\ge
0$ and letting $M\in\Rl^{n\times n}$ be the symmetric matrix with
entry $\ind_{\{|a-b|>r\}}\gamma(|a-b|)$ in row $a$ and column $b$, an
application of \eqref{eq:bound_spectral_norm} to $M$ shows that
\begin{align*}
  \sum_{a=1}^n\sum_{b=1}^n u_a\gamma_r(|a-b|)u_b&=2\sum_{a=r+1}^\infty|\gamma(a)|\,\|u\|^2+\sum_{a=1}^n\sum_{b=1}^n u_a\gamma(|a-b|)u_b-\sum_{a=1}^n\sum_{b=1}^n u_a(M)_{ab}u_b\\
  &\ge\bigg[2\sum_{a=r+1}^\infty|\gamma(a)|-\max_{a\in\{1,\ldots,n\}}\sum_{b=1}^n|(M)_{ab}|\bigg]\,\|u\|^2\,.
\end{align*}
On the other hand, we have
$\max_{a\in\{1,\ldots,n\}}\sum_{b=1}^n|(M)_{ab}|\le2\sum_{a=r+1}^\infty|\gamma(a)|$, so this implies that $\gamma_r$ is a covariance function.

\smallskip

Fixing $h\in\R$ and denoting by $\Exd_r$ the expectation under
$\probd_r$, Lemma~\ref{lem:mu:convergencesupercrit} assures us that
for all $r\in\N$
\begin{equation*}
  \limsup_{n \uparrow \infty} -\frac 1n \log \Exd_r \bigg[\frac {1}{Z_{n,h}^{\textbf{-}}}\bigg] =
  \liminf_{n \uparrow \infty} -\frac 1n \log \Exd_r \bigg[\frac {1}{Z_{n,h}^{\textbf{-}}}\bigg]\eqc\mu_r(h)\,.
\end{equation*}
Consequently, for $r,r'\in\N$ we have $|\mu_r(h)-\mu_{r'}(h)|\le
6\sum_{a=\min\{r,r'\}+1}^\infty|\gamma(a)|$ according to Lemma
\ref{lem:comparison}, which in turn shows that the sequence with
$r$-term $\mu_r(h)$ is a Cauchy sequence, so the limit
$\lim_{r\uparrow\infty}\mu_r(h)\eqc\mu_\infty(h)$ exists. Lemma
\ref{lem:comparison} also yields
\begin{equation}
  \label{prop:mu_1}
  \Bigg|\frac 1n \log \Exd \bigg[\frac {1}{Z_{n,h}^{\textbf{-}}}\bigg]-\frac 1n \log \Exd_r \bigg[\frac {1}{Z_{n,h}^{\textbf{-}}}\bigg]\Bigg|
  \le 6\sum_{a=r+1}^\infty|\gamma(a)|
\end{equation}
for all $n,r\in\N$ and $h\in\Rl$. Letting first $n\uparrow\infty$ and
then $r\uparrow\infty$ in \eqref{prop:mu_1}, we finally realize that
\begin{equation*}
  \mu_\infty(h)\le\liminf_{n\uparrow\infty}-\frac 1n \log \Exd \bigg[\frac {1}{Z_{n,h}^{\textbf{-}}}\bigg]\le\limsup_{n\uparrow\infty}
  -\frac 1n \log \Exd \bigg[\frac {1}{Z_{n,h}^{\textbf{-}}}\bigg]\le\mu_\infty(h)\,,
\end{equation*}
which proves that the limit
$\mu(h)\ceq\lim_{n\uparrow\infty}-(1/n)\log
\bbE[1/Z_{n,h}^{\textbf{-}}]$ exists. Furthermore, one has $\mu(h)=\mu_\infty(h)$.

\smallskip

To conclude the proof of Proposition~\ref{prop:mu}, note that
Lemma~\ref{lem:mu:bounds} implies for every $h\in\R$ the bounds
$\min\{f(h)/3,f(h)^2/(9\bar\gamma)\} \leq \mu(h)\leq f(h)$, so it only
remains to show the Lipschitz continuity of the function $\mu$ that
maps $h$ to $\mu(h)$. This is straightforward and relies on the same
argument as in~\cite{GZ25concentration} for i.i.d.\ environment.
Indeed, a direct computation of $\partial_h \log
\bbE[1/Z_{n,h}^{\textbf{-}}]$, which we omit for the sake of brevity,
yields $|\mu(h) - \mu(h')| \leq |h-h'|$ for all $h,h'\in\R$.
\end{proof}

\section{Decay of correlations in the localized phase}\label{sec:quenchedcorrelations}
The goal of this section is to investigate the correlations between
polymer contact points under the quenched Gibbs measure
$\prob_{n,h,\go}$.  

\subsection{Decorrelation lemma}
We first state a decorrelation lemma that will be used throughout the
section. It follows from Proposition~\ref{prop:mu} and the comparison
argument from Lemma~\ref{lem:comparison}. 

\begin{lemma}
  \label{lem:decorrelation}
For any $h>h_c$ and $\varkappa\in(0,1)$ there exist $C>0$,
$\lambda>0$, and $\gd>0$ such that for all $n\in\N$ and integers
$0\leq a_1<b_1\leq a_2<\cdots <b_l\le n$ that satisfy $l\le \delta n$
and $\sum_{i=1}^l (b_i-a_i) \geq \varkappa n$ 
\begin{equation*}
\bbE\bigg[ \prod_{i=1}^l \prob_{b_i-a_i,h,\vartheta^{a_i}\cdot} [T_1=b_i-a_i] \bigg] \leq C^l \ee^{-\lambda n}\,.
\end{equation*}
\end{lemma}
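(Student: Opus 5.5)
We rewrite each factor exactly as $\prob_{t,h,\omega}[T_1=t]=p(t)/(\ee^{h+\omega_t}Z^{\textbf{-}}_{t,h}(\omega))$, with $t=b_i-a_i$. Since $p(t)\le1$ we get
\begin{equation*}
\prod_{i=1}^l \prob_{b_i-a_i,h,\vartheta^{a_i}\omega}[T_1=b_i-a_i]\le \ee^{-hl}\,\ee^{-\sum_{i=1}^l\omega_{b_i}}\prod_{i=1}^l\frac{1}{Z^{\textbf{-}}_{b_i-a_i,h}(\vartheta^{a_i}\omega)}\,.
\end{equation*}
The factor $\ee^{-\sum_i\omega_{b_i}}$ involves only $l\le\delta n$ charges. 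We remove it with Hölder's inequality with exponents $(q,q')$, $q$ close to $1$:
\begin{equation*}
\bbE\Big[\ee^{-q'\sum_i\omega_{b_i}}\Big]=\ee^{\frac{q'^2}{2}\var(\sum_i\omega_{b_i})}\le \ee^{q'^2\bar\gga l}.
\end{equation*}
Here we use $\var(\sum_i\omega_{b_i})\le 2\bar\gga l$, by \eqref{eq:bound_spectral_norm}, since the $b_i$ are distinct. This contributes only a $C^l$ factor. On the remaining factor $\Lambda=\prod_i 1/Z^{\textbf{-}}_{b_i-a_i,h}(\vartheta^{a_i}\cdot)$, Hölder costs $\bbE[\Lambda^q]^{1/q}$. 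The power can be handled because $Z^{\textbf{-}}\ge p(t)$, so $\Lambda^q\le\Lambda\prod_i p(b_i-a_i)^{-(q-1)}$. If we choose $q-1$ small, this polynomial loss is $\le \ee^{(q-1)\xi\sum_i\log(1+b_i-a_i)}\le \ee^{(q-1)\xi n}$, which is absorbed later.

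The key step is to show that $\bbE[\Lambda]\le C^l\ee^{-\lambda' n}$ with $\lambda'>0$ depending on $\mu(h)>0$ and $\varkappa$. We do this by truncation. By Lemma~\ref{lem:comparison}, replacing $\probd$ by $\probd_r$ (covariance $\gamma_r$ as in the proof of Proposition~\ref{prop:mu}) costs at most $\ee^{6n\sum_{a>r}|\gamma(a)|}$. We choose $r$ so large that this is $\le \ee^{\mu(h)\varkappa n/8}$. Under $\probd_r$ the charges have range $r$. We then split the blocks into long blocks ($b_i-a_i\ge K$) and short ones. Short blocks are bounded crudely by $1/p(t)\le(1+K)^\xi$, giving a $C^l$ factor. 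For long blocks we drop the last $r$ sites of each block, using the same renewal-splitting trick as in the proof of Lemma~\ref{lem:mu:convergencesupercrit}: $Z^{\textbf{-}}_{t}\ge Z_{t-r}\,p(r)$. This loses at most $r$ charges per block, which again yields a factor $C^l$ after controlling the $\ee^{-\omega}$ terms by Hölder. Under $\probd_r$ the truncated blocks are then mutually independent, so the expectation factorizes into $\prod_i\bbE_r[1/Z^{\textbf{-}}_{t_i-r,h}]$. By Proposition~\ref{prop:mu} and $\mu_r(h)\to\mu(h)$, taking $K$ large, each factor is $\le \ee^{-(\mu(h)/2)(t_i-r)}$. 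The total length of long blocks is at least $\varkappa n-Kl\ge \varkappa n/2$ once $\delta K\le\varkappa/2$, and the loss $rl\le r\delta n$ is small for $\delta$ small. Altogether $\bbE[\Lambda]\le C^l\ee^{-\mu(h)\varkappa n/8}$.

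Finally we choose the parameters in order: $r$, then $K$, then $\delta$ (so that $\delta K$ and $\delta r$ are small), then $q$ close to $1$ (so that $(q-1)\xi$ is below a fraction of $\mu(h)\varkappa$). The resulting bound has the form $C^l\ee^{-\lambda n}$. The main obstacle is the factorization step: we must perform the boundary surgery on each block uniformly in $\omega$ while keeping the cost per block bounded. We also have to check that the combination of Lemma~\ref{lem:comparison} with powers $q\ne1$ is legitimate. If it is not, we apply the truncation to $\Lambda$ itself before Hölder, which is possible because Lemma~\ref{lem:comparison} is stated exactly for products of this form.
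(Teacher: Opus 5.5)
Your argument works once you fix a one-line error, and it takes a genuinely different route from the paper in how independence is obtained.

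\textbf{The error in your opening identity.} It is off by a factor $\ee^{h+\omega_t}$. Since $Z_{t,h}=\ee^{h+\omega_t}Z^{\textbf{-}}_{t,h}$, one has exactly $\prob_{t,h,\omega}[T_1=t]=p(t)\ee^{h+\omega_t}/Z_{t,h}(\omega)=p(t)/Z^{\textbf{-}}_{t,h}(\omega)$, which is the identity the paper uses. Your first display would require $\prod_i p(b_i-a_i)\le \ee^{-hl-\sum_i\omega_{b_i}}$, which is false in general. The correct identity gives $\prod_i\prob_{b_i-a_i,h,\vartheta^{a_i}\omega}[T_1=b_i-a_i]\le\Lambda(\omega)$ pointwise. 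So you can delete the first H\"older step, the factor $\ee^{-\sum_i\omega_{b_i}}$, and the $p^{-(q-1)}$ trick.

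\textbf{Comparison with the paper.} The truncation to $\probd_r$ via Lemma~\ref{lem:comparison} is the same in both. The difference is how independence under $\probd_r$ is produced.

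The paper never modifies the blocks. It keeps the factors $p(t_i)$ and discards blocks of length $\le r$ using $\prob[\cdot]\le 1$. Among the remaining long blocks it keeps only the odd-indexed or only the even-indexed ones, whichever carry total length $\ge\varkappa n/4$. Two consecutive kept blocks are separated by a discarded block of length $>r$, so they are independent under $\probd_r$. The bound $\Exd_r[1/Z^{\textbf{-}}_{t,h}]\le (C/2)\ee^{-\mu(h)t/4}$, valid for all $t\in\N$ once $\mu_r(h)\ge\mu(h)/2$, then concludes. No H\"older inequality and no extra threshold are needed.

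Your surgery $Z^{\textbf{-}}_{t,h}\ge p(r)Z_{t-r,h}$ creates the gaps inside each block and keeps almost all the length. The price is that it introduces single-site factors $\ee^{-\omega_{b_i-r}}$. These need a second H\"older step, or the Gaussian-tail splitting from the proof of Lemma~\ref{lem:mu:convergencesupercrit}. You also need a threshold $K>r$, and the rate degrades. The paper's halving costs only a factor $2$ in the exponent, which is irrelevant since any $\lambda>0$ suffices.

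\textbf{Bookkeeping.} With truncation cost $\ee^{\mu(h)\varkappa n/8}$ and rate $\mu(h)/2$ on total length $\ge\varkappa n/2-r\delta n$, you get $\ee^{-\mu(h)\varkappa n/8+\mu(h)r\delta n/2}$ before the H\"older loss, not $\ee^{-\mu(h)\varkappa n/8}$. This is harmless once $r\delta$ is small compared with $\varkappa$.
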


\begin{proof}[Proof of Lemma~\ref{lem:decorrelation}]
Fix $h>h_c$ and $\varkappa\in(0,1)$.  As in the proof of
Proposition~\ref{prop:mu}, for $r\in\N$ introduce the covariance
function $\gamma_r$ defined by
$\gamma_r(0)\ceq\gamma(0)+2\sum_{c=r+1}^\infty|\gamma(c)|$ and
$\gamma_r(c)\ceq\ind_{\{c\le r\}}\gamma(c)$ for $c\in\N$, where
$\gamma$ is the covariance function associated with $\probd$. Let
$\probd_r$ be the law on $(\Omega,\mathcal{F})$ with covariance
function $\gamma_r$ that satisfies Assumption~\ref{assum:gauss} and
denote its expectation by $\Exd_r$. Finally, consider the alternative
free energy $\mu_r(h)\ceq\lim_{n\uparrow\infty}-(1/n)\log
\bbE_r[1/Z_{n,h}^{\textbf{-}}]$ in the environment with law
$\probd_r$, which exists by Proposition~\ref{prop:mu}. In the proof of
Proposition~\ref{prop:mu} we demonstrated that
$\mu(h)=\mu_\infty(h)\ceq\lim_{r\uparrow\infty}\mu_r(h)$. Then, as
$\mu(h)>0$ because $h>h_c$, we can take an $r$ so large that
$\sum_{c=r+1}^\infty|\gamma(c)|\leq \mu(h)\varkappa/192$ and
$\mu_r(h)\geq \mu(h)/2$. The latter implies that there exists $C\ge 2$
such that $\bbE_r[1/Z_{n,h}^{\textbf{-}}]\le (C/2)\ee^{-\mu(h)n/4}$
for all $n\in\N$.  We claim that the lemma is verified with the
constants $C$, $\lambda\ceq \mu(h)\varkappa/32$, and
$\gd\ceq\varkappa/2r$.

Pick $n,\ell\in\N$ and integers $0\leq a_1<b_1\leq a_2<\cdots <b_l\le n$
that satisfy $l\le \delta n$ and $\sum_{i=1}^l (b_i-a_i) \geq
\varkappa n$.  Lemma~\ref{lem:comparison} yields
\begin{equation*}
  \Exd \bigg[\prod_{i=1}^l\frac {1}{Z_{b_i-a_i,h}^{\textbf{-}}(\vartheta^{a_i}\cdot)}\bigg]  \leq
  \Exd_r\bigg[\prod_{i=1}^l\frac {1}{Z_{b_i-a_i,h}^{\textbf{-}}(\vartheta^{a_i}\cdot)}\bigg] \ee^{6n\sum_{c=r+1}^\infty|\gamma(c)|} \,.
\end{equation*}
Recalling that $\prob_{t,h,\cdot}[T_1=t] = p(t)/Z^{\textbf{-}}_{t,h}$
for all $t\in\N$ and using
$\sum_{c=r+1}^\infty|\gamma(c)|\leq \mu(h)\varkappa/192$, we then obtain
\begin{equation}
  \Exd \bigg[\prod_{i=1}^l\prob_{b_i-a_i,h,\gth^{a_i}\cdot} [T_1=b_i-a_i]\bigg]
  \leq\Exd_r\bigg[\prod_{i=1}^l\prob_{b_i-a_i,h,\gth^{a_i}\cdot} [T_1=b_i-a_i]\bigg] \ee^{\frac{\mu(h)\varkappa}{32} n}\,.
\label{eq:lem:decorrelation:main}
\end{equation}
We now restrict the product in the right-hand side of
\eqref{eq:lem:decorrelation:main} to some of those indices $i$ for
which $b_i-a_i> r$, using the bound $\prob_{b_i-a_i,h,\gth^{a_i}\cdot}
[T_1=b_i-a_i]\le 1$ to discard the terms corresponding to the
remaining indices $i$.  To this aim, let $i_1<i_2<\cdots<i_m$ with
$m\le l$ denote all the indices $i\in\{1,\ldots,l\}$ such that
$b_i-a_i>r$. We observe that
\begin{equation*}
\sum_{j=1}^m (b_{i_j}-a_{i_j}) = \sum_{i=1}^l (b_i-a_i) - \sum_{i=1}^l (b_i-a_i) \ind_{\{b_i-a_i \leq r\}} \geq \varkappa n-rl\ge(\varkappa-\gd r) n=\frac{\varkappa}{2}n\,,
\end{equation*}
which shows in particular that $m>0$.  Splitting the sum $\sum_{j=1}^m
(b_{i_j}-a_{i_j})$ into odd and even labels, we also see that either
\begin{equation*}
    \sum_{j\in J_o} (b_{i_j}-a_{i_j}) \geq \frac{\varkappa}{4}n \quad\text{or}\quad \sum_{j\in J_e} (b_{i_j}-a_{i_j})  \geq \frac{\varkappa}{4}n\,,
\end{equation*}
where $J_o\ceq\{j\in\{1,\ldots,m\}: j \text{ is odd}\}$ and
$J_e\ceq\{j\in\{1,\ldots,m\}: j \text{ is even}\}$. Then
\eqref{eq:lem:decorrelation:main} implies
\begin{equation}
  \Exd \bigg[\prod_{i=1}^l\prob_{b_i-a_i,h,\gth^{a_i}\cdot} [T_1=b_i-a_i]\bigg]
  \leq\Exd_r\bigg[\prod_{j=1}^m\prob_{b_{i_j}-a_{i_j},h,\gth^{a_{i_j}}\cdot} [T_1=b_{i_j}-a_{i_j}]\bigg] \ee^{\frac{\mu(h)\varkappa}{32} n}\\
\label{eq:lem:decorrelation:0}
\end{equation}
with
\begin{align}
  \nonumber
  &\Exd_r\bigg[\prod_{j=1}^m\prob_{b_{i_j}-a_{i_j},h,\gth^{a_{i_j}}\cdot} [T_1=b_{i_j}-a_{i_j}]\bigg]\\
  \nonumber
&\qquad\leq \ind_{\big\{\sum_{j\in J_o} (b_{i_j}-a_{i_j})\geq \frac{\varkappa}{4}n\big\}} \Exd_r\bigg[\prod_{j\in J_o}\prob_{b_{i_j}-a_{i_j},h,\gth^{a_{i_j}}\cdot} [T_1=b_{i_j}-a_{i_j}]\bigg]\\
  &\qquad\quad +  \ind_{\big\{\sum_{j\in J_e} (b_{i_j}-a_{i_j})\geq \frac{\varkappa}{4}n\big\}} \Exd_r\bigg[\prod_{j\in J_e}\prob_{b_{i_j}-a_{i_j},h,\gth^{a_{i_j}}\cdot} [T_1=b_{i_j}-a_{i_j}]\bigg]\,.
  \label{eq:lem:decorrelation:1}
\end{align}
Since $\gamma_r(c)=0$ for all $c> r$, we note that the factors
$\prob_{b_{i_j}-a_{i_j},h,\gth^{a_{i_j}}\cdot} [T_1=b_{i_j}-a_{i_j}]$
with $j\in J_o$ are independent under $\bbP_r$. The same is true for
the factors $\prob_{b_{i_j}-a_{i_j},h,\gth^{a_{i_j}}\cdot}
[T_1=b_{i_j}-a_{i_j}]$ with $j\in J_e$.  Moreover, the constant $C$
that we have defined above allows us to write for all $j$
\begin{equation*}
\Exd_r\Big[\prob_{b_{i_j}-a_{i_j},h,\gth^{a_{i_j}}\cdot} [T_1=b_{i_j}-a_{i_j}]\Big]\le \Exd_r\bigg[\frac{1}{Z^{\textbf{-}}_{b_{i_j}-a_{i_j},h}}\bigg]\le \frac{C}{2}\ee^{-\frac{\mu(h)}{4}(b_{i_j}-a_{i_j})}\,.
\end{equation*}
In this way, \eqref{eq:lem:decorrelation:1} becomes
\begin{align*}
&\Exd_r\bigg[\prod_{j=1}^m\prob_{b_{i_j}-a_{i_j},h,\gth^{a_{i_j}}\cdot} [T_1=b_{i_j}-a_{i_j}]\bigg]\\
&\qquad\leq \ind_{\big\{\sum_{j\in J_o} (b_{i_j}-a_{i_j})\geq \frac{\varkappa}{4}n\big\}} \prod_{j\in J_o}\Exd_r\Big[\prob_{b_{i_j}-a_{i_j},h,\gth^{a_{i_j}}\cdot} [T_1=b_{i_j}-a_{i_j}]\Big]\\
&\qquad\quad +  \ind_{\big\{\sum_{j\in J_e} (b_{i_j}-a_{i_j})\geq \frac{\varkappa}{4}n\big\}} \prod_{j\in J_e}\Exd_r\Big[\prob_{b_{i_j}-a_{i_j},h,\gth^{a_{i_j}}\cdot} [T_1=b_{i_j}-a_{i_j}]\Big]\\
     &\qquad \leq 2 \bigg(\frac{C}{2}\bigg)^{\lceil \frac{m}{2}\rceil} \ee^{-\frac{\mu(h)\varkappa}{16}n}
     \leq C^l \ee^{-\frac{\mu(h)\varkappa}{16}n}\,,
\end{align*}
and \eqref{eq:lem:decorrelation:0} gives
\begin{equation*}
  \Exd \bigg[\prod_{i=1}^l\prob_{b_i-a_i,h,\gth^{a_i}\cdot} [T_1=b_i-a_i]\bigg]
  \leq C^l \ee^{-\frac{\mu(h)\varkappa}{16}n+\frac{\mu(h)\varkappa}{32} n}=C^l\ee^{-\lambda n}\,.
\qedhere
\end{equation*}
\end{proof}

\begin{remark}
{\rm Let us mention that Lemma~\ref{lem:decorrelation} could also be
  obtained by a direct application of the hypercontractivity
  inequality from Proposition~\ref{prop:hypercontract}, under the additional
  hypothesis that there exists $\eta>0$ such that
  $\sum_{a=1}^n\sum_{b=1}^nu_a\gamma(|a-b|)u_b\ge\eta$ for all
  $n\in\N$ and $(u_1,\ldots,u_n)\in\mathbb{S}_n$. This would make for
  a shorter, more elegant proof; however, we stuck with the longer
  proof in order to cover the most general case.}
\end{remark}

\subsection{Polymer correlations in the localized phase}
In order to investigate the correlations under the Gibbs measure in
the localized phase, we require the following two lemmas: the first
one provides a lower bound on the number of contact points, while the
second one states that large excursions $T_i$ with $1\le i\le L_n$ are
unlikely to cover a macroscopic subset of $\{1,\ldots,n\}$.  They were
proved for the i.i.d.\ case in~\cite[Lemma~2.3]{GZ25concentration}
and~\cite[Lemma~3.3]{GZ25concentration}, respectively. In this paper
we adapt them to a Gaussian field with summable correlations by
replacing the independence assumption with
Lemma~\ref{lem:decorrelation}.

\begin{lemma}\label{lem:supercrit:contactLB}
For every closed set $H\subset(h_c,+\infty)$ there exist $\gd>0$
and $\lambda>0$ such that for all $n\in\N$
\begin{equation*}
\bbE\bigg[\sup_{h\in H} \prob_{n,h,\cdot}[L_n < \gd n]\bigg] \leq \ee^{-\lambda n}\,.
\end{equation*}
\end{lemma}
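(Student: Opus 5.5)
We reduce to a single $h$, expand the event $\{L_n<\gd n\}$ over renewal configurations, and control each configuration with Lemma~\ref{lem:decorrelation}.

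\textbf{Reduction to a single $h$.} By monotonicity, $\prob_{n,h,\go}[L_n<\gd n]$ is non-increasing in $h$. Indeed, $\partial_h \log Z_{n,h}=\Ex_{n,h,\go}[L_n]$, and more generally the family $\prob_{n,h,\go}$ is stochastically increasing in $L_n$, since it is an exponential tilt by $hL_n$. Hence
\[
\sup_{h\in H}\prob_{n,h,\go}[L_n<\gd n]\le \prob_{n,h_0,\go}[L_n<\gd n], \qquad h_0\ceq\min H>h_c,
\]
and $h_0$ exists because $H$ is closed and bounded below by a point $>h_c$. It therefore suffices to prove the bound for a single $h>h_c$.

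\textbf{Decomposition over renewal configurations.} On $\{L_n<\gd n\}$, the contact set $\{0=s_0<s_1<\dots<s_k=n\}$ has $k<\gd n$ gaps. For each configuration we can write
\[
\prob_{n,h,\go}[S_j=s_j\ \forall j\le k,\ L_n=k]\le \prod_{j=1}^k \prob_{s_j-s_{j-1},h,\gth^{s_{j-1}}\go}[T_1=s_j-s_{j-1}]\cdot \frac{\prod_j Z^{\textbf{-}}_{s_j-s_{j-1},h}(\gth^{s_{j-1}}\go)\,\ee^{h+\go_{s_j}}}{Z_{n,h}(\go)}.
\]
The last ratio is at most $1$, because by the renewal property the product of the block partition functions times $\ee^{\sum_j (h+\go_{s_j})}$ is bounded by $Z_{n,h}$. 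Each configuration is thus bounded by a product of terms $\prob_{b_i-a_i,h,\gth^{a_i}\go}[T_1=b_i-a_i]$ with $a_i=s_{i-1}$, $b_i=s_i$, $l=k\le\gd n$, and $\sum_i(b_i-a_i)=n\ge\varkappa n$ for any fixed $\varkappa\in(0,1)$, say $\varkappa=1/2$.

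\textbf{Applying the decorrelation lemma and summing.} Taking expectations and applying Lemma~\ref{lem:decorrelation} bounds each configuration's contribution by $C^k\ee^{-\lambda' n}$, provided $\gd\le\gd'$, where $\gd'$ is the lemma's constant. Summing over configurations with $k<\gd n$ gaps gives
\[
\bbE\big[\prob_{n,h,\cdot}[L_n<\gd n]\big]\le \sum_{k<\gd n}\binom{n}{k}C^k\ee^{-\lambda' n}\le \ee^{\,n(\gd\log C+H(\gd))-\lambda' n},
\]
where $H(\gd)\ceq-\gd\log\gd-(1-\gd)\log(1-\gd)$ is the binary entropy. Choosing $\gd$ small enough that $\gd\log C+H(\gd)<\lambda'/2$ yields the claim with $\lambda=\lambda'/2$.

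\textbf{Expected main obstacle.} The delicate step is this entropy-versus-energy balance: the constants $C$ and $\lambda'$ of Lemma~\ref{lem:decorrelation} depend on $\varkappa$ and must be fixed before $\gd$ is chosen. Since $\varkappa=1/2$ is fixed in advance, this ordering causes no problem. The secondary check is verifying that the ratio in the decomposition is indeed bounded by $1$. Writing $Z_{n,h}$ as $\Ex[\prod \ldots]$ restricted to the exact contact set $\{s_j\}$ shows that the configuration probability equals $\prod_j p(s_j-s_{j-1})\ee^{h+\go_{s_j}}/Z_{n,h}$, and the factors $\prob[T_1=t]=p(t)/Z^{\textbf{-}}_t$ then require the product of the $Z^{\textbf{-}}$'s (times $\ee^{\sum(h+\go_{s_j})}$) to be at most $Z_{n,h}$. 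This holds by the renewal property, since imposing contacts at the $s_j$ only restricts $Z_{n,h}$.
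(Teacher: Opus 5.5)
Your proof is correct and follows the paper's route. You reduce to $h_o=\min H$, bound $\prob_{n,h_o,\omega}[L_n<\delta n]$ by a sum over contact configurations of products $\prod_i \prob_{b_i-b_{i-1},h_o,\vartheta^{b_{i-1}}\omega}[T_1=b_i-b_{i-1}]$, apply Lemma~\ref{lem:decorrelation} with $\varkappa=1/2$, and beat the combinatorial factor $\binom{n-1}{l-1}C^l$ by taking $\delta$ small. The only differences are in presentation: you derive the quenched bound yourself (monotonicity in $h$ via the exponential tilt, plus the renewal factorization), whereas the paper cites \cite[Lemma~2.3]{GZ25concentration}, and you use the binary-entropy bound on $\sum_{k\le\delta n}\binom{n}{k}$ (valid for $\delta\le 1/2$ and $C\ge 1$), whereas the paper uses $\ind_{\{l<\delta n\}}\le(1+C_o/\lambda)^{\delta n-l}$.
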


Measurability of $\sup_{h\in H} \prob_{n,h,\cdot}[L_n < \gd n]$ is not
an issue since $h\mapsto \prob_{n,h,\go}[L_n < \gd n]$ is continuous
for all $\omega\in\Omega$.

\begin{proof}[Proof of Lemma~\ref{lem:supercrit:contactLB}]
Fix a closed subset $H\subset(h_c,+\infty)$ and put $h_o\ceq \inf H >
h_c$. Recalling the quenched computations
of~\cite[Lemma~2.3]{GZ25concentration}, one obtains for all $n\in\N$,
$\go\in\gO$, and $\delta\in(0,1]$
\begin{equation}
  \sup_{h\in H} \prob_{n,h,\go}[L_n<\gd n] \leq \sum_{l\in\N}\ind_{\{l<\gd n\}}\sum_{0=b_0<b_1<\cdots<b_l=n} \,\prod_{i=1}^l \prob_{b_i-b_{i-1},h_o,\gth^{b_{i-1}}\go} [T_1=b_i-b_{i-1}]\,.
  \label{lem:supercrit:contactLB_0}
\end{equation}
At the same time, applying Lemma~\ref{lem:decorrelation} with sites
$a_i=b_{i-1}$ and, e.g., $\varkappa=1/2$, we deduce that there exist
positive constants $C_o$, $\lambda_o$, and $\delta_o$ (depending on
$h_o>h_c$) such that for all $n\in\N$ and $0=b_0<b_1<\cdots<b_l=n$
with $l\le\delta_on$
\begin{equation}
  \label{eq:decorrelationinequality1}
 \Exd\bigg[\prod_{i=1}^l \prob_{b_i-b_{i-1},h_o,\gth^{b_{i-1}}\cdot} [T_1=b_i-b_{i-1}]\bigg]\leq C_o^l \ee^{-\lambda_o n}\,.
\end{equation}
Therefore, integrating \eqref{lem:supercrit:contactLB_0} with respect
to $\probd[\dd\omega]$ and then appealing to
\eqref{eq:decorrelationinequality1}, one finds for all $n\in\N$ and
positive $\delta\le \min\{1,\delta_o\}$
\begin{equation*}
\bbE\left[\sup_{h\in H} \prob_{n,h,\cdot}[L_n < \gd n]\right]\leq \sum_{l\in\N}\ind_{\{l<\gd n\}} \binom{n-1}{l-1} C_o^l \ee^{-\lambda_o n}\,.
\end{equation*}
Set $\lambda\ceq\lambda_o/3$ and $\gd\ceq
\min\{1,\delta_o,\lambda/\log(1+ C_o/\lambda)\}$. Then, using
$\ind_{\{l<\gd n\}}\leq (1+C_o/\lambda)^{\gd n-l}\leq C_o^{-l}
\lambda^l \ee^{\lambda n}$ for $l\le n$, one finally get
\begin{align*}
  \bbE\left[\sup_{h\in H} \prob_{n,h,\cdot}[L_n < \gd n]\right]\leq \sum_{l=1}^n\binom{n-1}{l-1}\lambda^l\ee^{-2\lambda n}
= \lambda(1+\lambda)^{n-1}\ee^{-2\lambda n}\leq \ee^{-\lambda n}\,,
\end{align*}
which concludes the proof.
\end{proof}

\begin{lemma}
  \label{lem:supercrit:finitejumps}
For any closed set $H\subset(h_c,+\infty)$ and $\varkappa\in(0,1)$ there
exist $\lambda>0$ and $\tau\in\N$ such that for all $n\in\N$
\begin{equation}
\bbE\Bigg[\sup_{h\in H} \prob_{n,h,\cdot}\bigg[ \sum_{i=1}^{L_n}T_i\ind_{\{T_i>\tau\}} \ge \varkappa n\bigg]\Bigg] \leq \ee^{-\lambda n}\,.
\end{equation}
\end{lemma}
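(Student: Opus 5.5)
The proof will mirror that of Lemma~\ref{lem:supercrit:contactLB}. First comes a quenched combinatorial bound of the type used in~\cite[Lemma~3.3]{GZ25concentration}, then an application of Lemma~\ref{lem:decorrelation} to the long excursions.

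\emph{Step 1: quenched bound.} Fix a closed $H\subset(h_c,+\infty)$ and set $h_o\ceq\inf H$. On the event $\sum_{i\le L_n}T_i\ind_{\{T_i>\tau\}}\ge\varkappa n$, we list the excursions longer than $\tau$ as intervals $(a_1,b_1],\ldots,(a_l,b_l]$ with $0\le a_1<b_1\le a_2<\cdots<b_l\le n$. These satisfy $b_i-a_i>\tau$, hence $l<n/\tau$, and $\sum_i(b_i-a_i)\ge\varkappa n$. We use the renewal property to decouple the pieces $[0,a_1],[a_1,b_1],[b_1,a_2],\ldots$. For each long piece, the Gibbs weight of a single jump from $a_i$ to $b_i$, normalised by the partition function on that interval, equals $\prob_{b_i-a_i,h,\gth^{a_i}\go}[T_1=b_i-a_i]$. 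The remaining pieces have ratios of partition functions that are bounded by $1$ in the pinned/free sense, which is the quenched computation of~\cite{GZ25concentration}. Monotonicity in $h$ then lets us take the supremum over $H$ at the price of replacing $h$ by $h_o$. The only change with respect to Lemma~\ref{lem:supercrit:contactLB} is that here, on each long excursion, the Gibbs weight is bigger at small $h$. We thus expect
\begin{equation*}
\sup_{h\in H}\prob_{n,h,\go}\Big[\sum_{i=1}^{L_n}T_i\ind_{\{T_i>\tau\}}\ge\varkappa n\Big]\le\sum_{l<n/\tau}\ \sideset{}{^*}\sum_{(a_i,b_i)}\prod_{i=1}^l\prob_{b_i-a_i,h_o,\gth^{a_i}\go}[T_1=b_i-a_i]\,,
\end{equation*}
where $\sum^*$ runs over admissible families with $b_i-a_i>\tau$ and total length at least $\varkappa n$. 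Possibly a factor $C_1^l$ appears, from the endpoint terms $\ee^{h+\go}$ in passing between $Z$ and $Z^{\textbf{-}}$. If so, we control it with a Gaussian moment via Hölder, or absorb it into the constant $C$ of Lemma~\ref{lem:decorrelation}, since $\Exd$ of products of $\ee^{-\go_{a}}$ over sites grows at most like $\ee^{cl}$ by $\bar\gamma<\infty$.

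\emph{Step 2: average.} We integrate the bound under $\probd$ and apply Lemma~\ref{lem:decorrelation} with $h_o$ and $\varkappa$, which yields $C,\lambda_o,\delta_o$. Choosing $\tau\ge1/\delta_o$ guarantees $l\le n/\tau\le\delta_o n$, so each term is at most $C^l\ee^{-\lambda_o n}$. The number of families with $l$ intervals is at most $\binom{n+1}{2l}$. Hence the expectation is at most $\sum_{l\le n/\tau}\binom{n+1}{2l}C^l\ee^{-\lambda_o n}$. For $\tau$ large, entropy bounds give $\sum_{l\le n/\tau}\binom{n+1}{2l}C^l\le\ee^{\epsilon(\tau)n}$ with $\epsilon(\tau)\to0$, since this is of order $(e\tau C)^{2n/\tau}$. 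Taking $\tau$ so large that $\epsilon(\tau)\le\lambda_o/2$ gives the claim with $\lambda=\lambda_o/2$, up to handling small $n$ by adjusting constants.

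\emph{Main obstacle.} The delicate step is Step 1. Reducing the supremum over $h\in H$ to the single value $h_o$ requires the exact quenched factorisation, in which the contribution of the long excursions appears only through the quantities $\prob_{b_i-a_i,h,\gth^{a_i}\go}[T_1=b_i-a_i]$ and all other pieces are bounded by $1$. We expect to reuse verbatim the quenched argument of~\cite[Lemma~3.3]{GZ25concentration}, which does not depend on the law of the disorder. The Gaussian correlations enter only through Lemma~\ref{lem:decorrelation}.
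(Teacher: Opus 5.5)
Your proposal matches the paper's proof step by step: the quenched bound from \cite[Lemma~3.3]{GZ25concentration} reduces everything to products of $\prob_{b_i-a_i,h_o,\gth^{a_i}\go}[T_1=b_i-a_i]$, with no extra $C_1^l$ factor because the renewal property and $Z_{n,h}\ge Z_{a,h}\,Z_{b-a,h}(\gth^a\cdot)\,Z_{n-b,h}(\gth^b\cdot)$ produce exactly these ratios; this is followed by Lemma~\ref{lem:decorrelation} with $\tau\ge 1/\delta_o$ and an entropy count. One small correction: consecutive long excursions may share an endpoint ($b_i=a_{i+1}$), so the number of admissible families is $\binom{n+l}{2l}$, as in the paper, not $\binom{n+1}{2l}$; for $l<n/\tau$ this still gives a factor $\ee^{\epsilon(\tau)n}$ with $\epsilon(\tau)\to0$, so your conclusion is unaffected.
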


\begin{proof}[Proof of Lemma~\ref{lem:supercrit:finitejumps}]
The proof is quite similar to that of
Lemma~\ref{lem:supercrit:contactLB}. Fix a closed set
$H\subset(h_c,+\infty)$ and put $h_o\ceq \inf H > h_c$. Pick
$\varkappa\in(0,1)$. Note that if there exist $l\ge1$ indices $1\le
i_1<\cdots<i_l\le L_n$ such that $T_{i_j}>\tau$ for all
$j\in\{1,\ldots,l\}$, then $l<n/\tau$. Therefore, the same quenched
computations as in~\cite[Lemma~3.3]{GZ25concentration} yield that for all
$n\in\N$, $\omega\in\Omega$, and $\tau\in\N$
\begin{align}
\notag
&\sup_{h\in H} \prob_{n,h,\go}\bigg[ \sum_{i=1}^{L_n}T_i\ind_{\{T_i>\tau\}} \ge \varkappa  n\bigg] 
\\\label{eq:lem:supercrit:finitejumps:1}
&\quad\leq \sum_{l\in\N}\ind_{\{l<\frac n\tau\}}\sum_{0\le a_1<b_1\le\cdots<b_l\le n} \ind_{\{\sum_{i=1}^l(b_i-a_i)\ge\varkappa n\}} \prod_{i=1}^l  \prob_{b_i-a_i,h_o,\gth^{a_i}\go}[T_1=b_i-a_i]\,.
\end{align}
Recalling Lemma~\ref{lem:decorrelation}, there exist three positive
constants $C_o$, $\lambda_o$, and $\gd_o$ (depending on $h_o$ and
$\varkappa$) such that for every integers $0\le a_1<b_1\le\cdots<b_l\leq
n$ with $l\le \delta_o n$ and $\sum_{i=1}^l(b_i-a_i)\ge\varkappa n$
\begin{equation*}
\bbE\left[ \prod_{i=1}^l \prob_{b_i-a_i,h_o,\gth^{a_i}\cdot} [T_1=b_i-a_i] \right] \leq C_o^l \ee^{-\lambda_o n}\,.
\end{equation*}
Then, integrating with respect to $\bbP[\dd \go]$
in~\eqref{eq:lem:supercrit:finitejumps:1}, we eventually obtain for
all $n,\tau\in\N$ with $\tau\geq 1/\delta_o$
\begin{equation*}
\bbE\Bigg[\sup_{h\in H} \prob_{n,h,\cdot}\bigg[ \sum_{i=1}^{L_n}T_i\ind_{\{T_i>\tau\}} \ge \varkappa n\bigg]\Bigg]
\leq \sum_{l\in\N}\ind_{\{l<\frac n\tau\}} \binom{n+l}{2l} C_o^l \ee^{-\lambda_o n}\,.
\end{equation*}
Following from this bound, the proof of the lemma is concluded by a
direct analysis that gives appropriate values $\lambda>0$ and
$\tau\in\N$ in terms of $C_o$, $\lambda_o$, and $\delta_o$. Since this
argument is a carbon copy of the analysis
in~\cite[Lemma~3.3]{GZ25concentration}, we omit the details here for
the sake of brevity.
\end{proof}

We finally introduce the main result of this section, which is the
exponential decay of correlations between contact points of the
polymer under the Gibbs measure in the localized phase. As a matter of
fact, we present a stronger statement on the decorrelation between two
copies of the polymer, which implies the aforementioned result. These
statements play a key role in the remainder of the analysis of the
pinning model in its localized phase.

\begin{lemma}
  \label{lem:supercrit:polymercorrelations}
Let $(S_i)_{i\in\N_0}$ and $(S'_i)_{i\in\N_0}$ be two independent
copies of the renewal process and let $\Ex^{\otimes2}_{n,h,\omega}$
denote the expectation under the product law of their quenched Gibbs
measures in the same environment $\go\in\gO$.  Then for any closed set
$H\subset(h_c,+\infty)$ there exist $C>0$ and $\lambda>0$ such that
for all $n\in\N$
\begin{equation*}
\bbE\Bigg[\sup_{h\in H} \Ex^{\otimes2}_{n,h,\cdot}\bigg[\prod_{k=1}^{n-1}(1-X_kX'_k)\bigg]\Bigg] \leq C \ee^{-\lambda n}\,.
\end{equation*}
\end{lemma}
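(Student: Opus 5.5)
The idea is to reduce the two-replica non-intersection event to configurations with many long excursions, and then control these with Lemmas~\ref{lem:supercrit:contactLB} and~\ref{lem:supercrit:finitejumps}. This mirrors the i.i.d.\ argument of \cite[Section~3]{GZ25concentration}; since the quenched part carries over verbatim, only the averaging over $\go$ needs replacing, and that is already packaged in those two lemmas.

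First, a deterministic observation. On the event $\{X_kX'_k=0 \text{ for all } 1\le k\le n-1\}$, every contact $a$ of $S$ in $(0,n)$ is covered by an excursion of $S'$ that straddles it, and symmetrically. Fix $\tau\in\N$ and suppose
\begin{equation*}
\sum_{i\le L_n}T_i\ind_{\{T_i>\tau\}}<\varkappa n
\quad\text{and}\quad
\sum_{i\le L'_n}T'_i\ind_{\{T'_i>\tau\}}<\varkappa n .
\end{equation*}
Then all contacts of $S$ lying in short (length $\le\tau$) excursions of $S'$ are packed densely. Consider the sites not covered by long excursions of either replica; they have total measure at least $(1-2\varkappa)n$. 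On this set, both replicas move by steps of length at most $\tau$ and never touch simultaneously.

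This is not impossible deterministically, since the two replicas can interlace. So the exponential smallness has to come from the Gibbs weights, not from a combinatorial contradiction.

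The approach is therefore to bound, for each fixed $\go$, the quantity
\begin{equation*}
\Ex^{\otimes2}_{n,h,\go}\Big[\prod_{k=1}^{n-1}(1-X_kX'_k)\Big]
\end{equation*}
by a sum over the excursion structure, in the same way as in \eqref{lem:supercrit:contactLB_0} and \eqref{eq:lem:supercrit:finitejumps:1}. The claim to prove is that, conditionally on $S'$, the probability that $S$ avoids all contacts of $S'$ is small. Indeed, by \eqref{eq:local_bound}, each contact $a$ of $S'$ with $\go_a$ not too negative is hit by $S$ with probability bounded below. Using the renewal (Markov) structure of $S$ under $\prob_{n,h,\go}$, one shows that $S$ avoids $\ge\delta n$ such well-separated sites with probability at most $(1-c)^{\delta' n}$. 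Concretely, I would proceed as follows.

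1. Restrict to the good event from Lemmas~\ref{lem:supercrit:contactLB} and~\ref{lem:supercrit:finitejumps} applied to $S'$: at least $\gd n$ contacts, and long jumps covering less than $\varkappa n$ sites. Its complement contributes at most $2\ee^{-\lambda n}$ after $\bbE$, uniformly in $h\in H$.

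2. On the good event, extract at least $c n$ contacts $a$ of $S'$ that are separated by gaps $\ge 2\tau$ and lie in short-excursion regions. Among these, keep only the sites where $\go_a\ge -K$. The fraction of sites with $\go_a<-K$ is controlled by ergodicity or a large-deviation bound for $\sum_a\ind_{\{\go_a<-K\}}$. In the Gaussian summable case this large deviation can be obtained by the truncation trick: compare with $\probd_r$ via a Lemma~\ref{lem:comparison}-type argument, or use Proposition~\ref{prop:concentration} on a smoothed count.

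3. Hitting estimate. For such a site $a$, given the past of $S$ up to the last renewal before $a-\tau$, the conditional probability that $S$ hits $a$ is bounded below by a constant depending on $\tau$, $K$, $H$ and the local charges. This follows from a bound like \eqref{eq:local_bound} applied to the conditioned polymer together with the renewal property. Iterating the estimate gives a factor $(1-c)^{cn}$.

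I expect step 2 to be the main obstacle: producing a uniform-in-$\go$ local lower bound, and controlling the atypical sites with very negative charges under correlated disorder. The natural tool is Lemma~\ref{lem:supercrit:finitejumps} applied to the block structure, together with an exponential bound on $\probd\big[\#\{a\le n:\go_a<-K\}>\varkappa n\big]$. I would obtain the latter via exponential Chebyshev under $\probd_r$ and then transfer it back to $\probd$ by the comparison lemma, with $K$ large.
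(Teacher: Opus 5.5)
Your architecture matches the paper's. There too the lemma is reduced to the quenched computations of \cite[Lemma~3.2]{GZ25concentration} together with Lemmas~\ref{lem:supercrit:contactLB} and~\ref{lem:supercrit:finitejumps}. The only new input is the bound $\probd\big[\sum_{a=1}^n\ind_{\{\go_a<-z\}}>\gd n\big]\le\ee^{-\gd n}$ for suitable $z=z(\gd)$, which is exactly the obstacle you single out in step~2.

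\textbf{How the bound is proved.} This is where you and the paper differ. The paper needs neither truncation nor comparison. If more than $\gd n$ charges lie below $-z$, then $\sum_{a\in A}\go_a<-z\gd n$ for some $A\subseteq\{1,\ldots,n\}$. Each such sum is a centered Gaussian with variance at most $2\bar\gga n$, and a union bound over the $2^n$ sets $A$ concludes.

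Your first route is workable but heavier. Lemma~\ref{lem:comparison} as stated covers only products of $1/Z^{\textbf{-}}$. You would have to redo the interpolation for $\Lambda=\exp\{t\sum_{a=1}^n\phi(\go_a)\}$, with $\phi$ a smooth majorant of $\ind_{(-\infty,-K)}$. Then $|\partial_{\go_a}\partial_{\go_b}\Lambda|\le(t\|\phi''\|_\infty+t^2\|\phi'\|_\infty^2)\Lambda$, and the constants must be fixed in the order $t$, then $r$, then $K$.

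Your second route is as short as the paper's. The proof of Proposition~\ref{prop:concentration} applies verbatim to $F=\sum_{a=1}^n\phi(\go_a)$ with $\phi(x)=\min\{1,\max\{0,1-K-x\}\}$. This $F$ is $\sqrt{2\bar\gga n}$-Lipschitz in the underlying standard Gaussians, and $\Exd[F]\le n\probd[\go_0<1-K]$. You get a rate of order $\varkappa'^2/\bar\gga$ instead of $\gd$, which is harmless because the bound enters your argument only additively.

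\textbf{Step~3 is not right as phrased.} Conditioning on the last renewal $b$ before $a-\tau$ also conditions on there being no renewal in $(b,a-\tau]$. Even ignoring that, \eqref{eq:local_bound} applied from $b$ carries the factor $\xi\min\{(a-b)^\xi,(n-a)^\xi\}$, which is unbounded when $b$ is far from $a$. So no uniform lower bound on the hitting probability follows. What the local estimate really controls is the probability of missing $a$ \emph{jointly} with $S$ having a renewal in $[a-\tau,a)$.

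The site selection also has a problem. Your sites are chosen using the excursions of $S$ itself, so you cannot iterate conditional bounds over them as if they were fixed. If instead you select from $S'$ and $\go$ only, thin to $2\tau$-separated sites, and then discard those under long excursions of $S$, you need $\gd/(2\tau)>\varkappa$. Lemma~\ref{lem:supercrit:finitejumps} does not provide this, since $\tau$ depends on $\varkappa$.

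The clean quenched step is an insertion argument:
\begin{itemize}
\item Let $A$ be the set of contacts of $S'$ in $\{1,\ldots,n-1\}$ with $\go_a\ge-K$; it is fixed once $S'$ and $\go$ are.
\item Let $J(S)$ be the set of sites of $A$ lying inside excursions of $S$ of length at most $\tau$.
\item For a trajectory $s$ with $s\cap A=\emptyset$, adding renewals at any $B\subseteq J(s)$ multiplies its weight by at least $\epsilon_0^{|B|}$, where $\epsilon_0\ceq\ee^{h_o-K}\min_{t\le\tau}p(t)^2$ and $h_o=\inf H$.
\item The map $(s,B)\mapsto s\cup B$ is injective, because $B=(s\cup B)\cap A$.
\end{itemize}
Hence $\Ex_{n,h,\go}\big[(1+\epsilon_0)^{|J(S)|}\ind_{\{S\cap A=\emptyset\}}\big]\le1$. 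On your good events $|J(S)|\ge(\gd-\varkappa-\varkappa')n-1$. You therefore only need $\varkappa+\varkappa'<\gd$, and no separation between the sites is required.
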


\begin{proof}[Proof of Lemma~\ref{lem:supercrit:polymercorrelations}]
This result is analogous to~\cite[Lemma~3.2]{GZ25concentration} for
the i.i.d.\ case. One can check therein that the lemma is a
consequence of some quenched computations together with
Lemmas~\ref{lem:supercrit:contactLB} and
\ref{lem:supercrit:finitejumps} and the following claim: for all
$\gd>0$ there exists $z>0$ such that for all $n\in\N$
\[
\int_\Omega\ind_{\big\{\sum_{a=1}^n \ind_{\{\go_a<-z\}}>\gd n\big\}}\probd[\dd\omega] \leq \ee^{-\gd n}\,.
\]
In the i.i.d.\ case this is a direct consequence of a Chernoff-type
bound. We now prove that the claim also holds under
Assumption~\ref{assum:gauss}. This will complete the proof of the
lemma with a carbon copy of the arguments in~\cite{GZ25concentration},
which we do not reproduce here for the sake of brevity.

\smallskip

Since, for every positive $\gd$ and $z$, the condition
$\sum_{a=1}^n \ind_{\{\go_a<-z\}}>\gd n$ implies that there exists
$A\subseteq\{1,\ldots,n\}$ with $|A|>\gd n$ such that $\omega_a<-z$
for all $a\in A$, which in turn entails $\sum_{a\in
  A}\omega_a<-z\delta n$, one has
\begin{equation*}
\int_\Omega\ind_{\big\{\sum_{a=1}^n \ind_{\{\go_a<-z\}}>\gd n\big\}}\probd[\dd\omega]  \leq \sum_{A\subseteq\{1,\ldots,n\}}\int_\Omega\ind_{\big\{\sum_{a\in A} \go_a<-z \gd n\big\}} \probd[\dd\omega]\,.
\end{equation*}
However, for any non-empty subset $A\subseteq\{1,\ldots,n\}$ the sum
$\sum_{a\in A}\go_a$ is a centered Gaussian random variable with
variance $\sum_{a\in A}\sum_{b\in A}\gamma(|a-b|)\leq 2\bar\gga n$.
In particular, a classical Gaussian tail estimate yields for all
$n\in\N$
\[
\int_\Omega\ind_{\big\{\sum_{a\in A} \go_a<-z \gd n\big\}} \probd[\dd\omega]\leq \ee^{-\frac{z^2\gd^2 }{4\bar\gga}n}\,,
\]
so
\begin{align*}
\int_\Omega\ind_{\big\{\sum_{a=1}^n \ind_{\{\go_a<-z\}}>\gd n\big\}}\probd[\dd\omega]\leq 2^n\ee^{-\frac{z^2\gd^2 }{4\bar\gga}n}\,.
\end{align*}
This demonstrates the claim taking $z\ceq \sqrt{4\bar\gga
  (\gd^{-1}+\gd^{-2}\log 2)}$.
\end{proof}

Let $\cov_{n,h,\go}$ denote the covariance under the polymer law
$\prob_{n,h,\go}$. Then we have the following corollary of
Lemma~\ref{lem:supercrit:polymercorrelations}.

\begin{corollary}
  \label{corol:correlations:polym}
For any closed set $H\subset(h_c,+\infty)$ there exist $C>0$ and
$\lambda>0$ such that for all integers $0\le a\le b\le n$
\begin{equation*}
\bbE\left[\sup_{h\in H} \frac{\left|\cov_{n,h,\cdot}[X_a,X_b]\right|}{\min\{\Ex_{n,h,\cdot}[X_a],\Ex_{n,h,\cdot}[X_b]\}}\right] \leq C \ee^{-\lambda (b-a)}\,.
\end{equation*}
\end{corollary}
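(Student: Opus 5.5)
The plan is to reduce the corollary to Lemma~\ref{lem:supercrit:polymercorrelations} by a purely quenched coupling argument, the same one used in~\cite{GZ25concentration} for the i.i.d.\ case. The law of the environment never enters except through the integrated bound of that lemma.

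\textbf{Step 1: a quenched bound on the covariance.} Fix $\omega$, $h$ and $0\le a\le b\le n$; the case $b-a\le 1$ is trivial, since the ratio is at most $1$ and $C$ may be taken large. Let $S,S'$ be two independent copies under $\prob^{\otimes2}_{n,h,\omega}$. Write
\begin{equation*}
2\,\cov_{n,h,\omega}[X_a,X_b]=\Ex^{\otimes2}_{n,h,\omega}\big[(X_a-X'_a)(X_b-X'_b)\big].
\end{equation*}
Let $\sigma$ be the first $k\in\{a+1,\ldots,b-1\}$ with $X_kX'_k=1$. On $\{\sigma<b\}$ the renewal property (\cite[Lemma~2.1]{GZ25concentration}) lets us swap the two paths after $\sigma$. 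This swap is measure preserving and exchanges $X_b$ with $X'_b$ while leaving $X_a,X'_a$ unchanged, so the contribution of $\{\sigma<b\}$ vanishes. We are left with
\begin{equation*}
|\cov_{n,h,\omega}[X_a,X_b]|\le \Ex^{\otimes2}_{n,h,\omega}\Big[(X_a+X'_a)\prod_{k=a+1}^{b-1}(1-X_kX'_k)\Big].
\end{equation*}
By symmetry in $(X,X')$ and in the roles of $a$ and $b$, the same estimate holds with $X_b+X'_b$ in place of $X_a+X'_a$.

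\textbf{Step 2: factorize at $a$ and normalize by $\Ex_{n,h,\omega}[X_a]$.} Consider the term $X_a\prod_{k=a+1}^{b-1}(1-X_kX'_k)$. Conditioning on $X_a=1$ splits the first polymer into independent pieces on $[0,a]$ and $[a,n]$. The second polymer is not pinned at $a$, so I would decompose it according to its last contact $s\le a$ and first contact $t>a$, and compare with the case $s=t=a$ using $p(t-s)\le \xi\,p(a-s)p(t-a)\min\{\cdot\}^\xi$ from \eqref{eq:defxi}. This yields, after dividing by $\Ex_{n,h,\omega}[X_a]$, a bound of the form
\begin{equation*}
\frac{|\cov_{n,h,\omega}[X_a,X_b]|}{\Ex_{n,h,\omega}[X_a]}\le \sum_{u} c_u(\omega)\,\Ex^{\otimes2}_{m,h,\vartheta^{u}\omega}\Big[\prod_{k=1}^{m-1}(1-X_kX'_k)\Big],
\end{equation*}
where each term is a two-replica non-intersection probability on a window of length $m\ge (b-a)/2$. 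The prefactors $c_u(\omega)$ carry polynomial weights together with factors of the form $\ee^{-h-\omega}$ and $\prob[T_1=\cdot]$.

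Since $\prob_{n,h,\omega}[X_b=1]$ is handled identically (by reflection, using the other version of Step 1), we can use whichever of the two normalizations is the minimum. In the expectation we simply bound by the sum of the two bounds, so it suffices to treat each of them.

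\textbf{Step 3: integrate over the environment.} Take $\sup_{h\in H}$, which can be moved inside the sum, and then $\bbE$. Apply Cauchy--Schwarz to split $c_u(\omega)$ from the non-intersection term. Gaussian moments of $\ee^{-\omega_a}$ are finite, and the shift invariance of $\probd$ lets us apply Lemma~\ref{lem:supercrit:polymercorrelations} to each window. Summing the resulting geometric or polynomial series in $u$ then gives $C\ee^{-\lambda(b-a)}$, with $\lambda$ halved relative to the lemma.

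\textbf{Main obstacle.} I expect Step 2 to be the main obstacle. The difficulty is to control the unpinned second replica near $a$ so that the ratio against $\Ex_{n,h,\omega}[X_a]$ stays integrable, uniformly in $h\in H$, without independence of the charges.

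If the decomposition becomes awkward, my fallback is to apply Lemma~\ref{lem:supercrit:polymercorrelations} directly. In that version I would only use $X_a\le 1$ and the lower bound \eqref{eq:local_bound} on $\Ex_{n,h,\omega}[X_a]$. This costs a factor $1+\xi\ee^{-h-\omega_a}\min\{a,n-a\}^\xi$, whose polynomial part must then be absorbed. That absorption is possible if one first restricts to a window of length comparable to $b-a$ around $a$ via the renewal property.
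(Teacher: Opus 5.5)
Your proposal is correct and follows essentially the route the paper takes by deferring to \cite[Lemma~3.4]{GZ25concentration}. In both, a quenched replica-swap and renewal computation reduces the normalized covariance to two-replica non-intersection probabilities on windows of length at least $b-a$ containing the pinned site. The free replica is forced through $a$ (or $b$) at a cost $\xi\ee^{-h-\omega_a}$ times a polynomial in the window length, via \eqref{eq:defxi} or \eqref{eq:local_bound}, and one then concludes with Lemma~\ref{lem:supercrit:polymercorrelations} and shift invariance. The only difference is minor: you absorb the random factor $\ee^{-\omega_a}$ by Cauchy--Schwarz and Gaussian moments, whereas the paper appeals to a Chernoff-type bound on the charges; either device works in the Gaussian setting.
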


\begin{proof}[Proof of Corollary~\ref{corol:correlations:polym}]
This statement is analogous to Lemma~3.4 in~\cite{GZ25concentration}
for the case of i.i.d.\ disorder.  It is a consequence of some
quenched computations together with
Lemma~\ref{lem:supercrit:polymercorrelations} and a Chernoff-type
bound on the charges.  In particular, these arguments also apply in the
case of the environment defined by Assumption~\ref{assum:gauss}, so we
do not reproduce the proof here.
\end{proof}

\section{Largest excursion, regularity and quenched CLT}
\label{sec:proofmainresults}

In this section we prove our three main results on the quenched
analysis of the localized phase of the pinning polymer model in an
environment satisfying Assumption~\ref{assum:gauss}. They concern the
size of the largest gap between contact points
(Proposition~\ref{prop:largestjump}), the smoothness of the free
energy (Theorem~\ref{thm:fsmooth}), and the quenched CLT for the total
number of contact points (Theorem~\ref{thm:CLT:Ln:quenched}), together
with a corresponding concentration inequality. These findings
constitute direct expansions of the quenched results established
in~\cite{GZ25concentration} from the i.i.d.\ setting to the case of a
Gaussian field with summable correlations, and follow primarily from
Lemma~\ref{lem:supercrit:polymercorrelations}. Accordingly, we focus
here on the elements of the proofs that differ
from~\cite{GZ25concentration} -- most notably the proof
of~\eqref{eq:largestjump:LB} below -- while omitting those arguments
that carry over unchanged, such as the quenched computations.

\subsection{Largest excursion}
The proof of Proposition~\ref{prop:largestjump} comes from the
following bounds: for all $h_o>h_c$ and $\eps\in(0,1]$ and for $\pae$
\begin{equation}
  \label{eq:largestjump:UB}
\adjustlimits\lim_{n\uparrow\infty}\sup_{h\in U_{h_o,\eps}} \prob_{n,h,\go}\left[\frac{M_n}{\log n}>\frac{1+\eps}{\mu(h)}\right]=0
\end{equation}
and
\begin{equation}
  \label{eq:largestjump:LB}
\adjustlimits\lim_{n\uparrow\infty}\sup_{h\in U_{h_o,\eps}} \prob_{n,h,\go}\left[\frac{M_n}{\log n}<\frac{1-\eps}{\mu(h)}\right]=0\,,
\end{equation}
where $U_{h_o,\eps}$ denotes the open interval centered at $h_o$ with
radius $\gd_o\ceq \min\{\eps\mu(h_o)/16, (h_o-h_c)/2\}$. Indeed,
Proposition~\ref{prop:largestjump} then follows from the same
compactness argument as in the proof of~\cite[Proposition
  1.4]{GZ25concentration}, which is not reproduced here for the sake
of conciseness. Note that the present value of $\delta_o$ is smaller
than the one used in \cite{GZ25concentration}, which is
$\min\{\eps\mu(h_o)/8, (h_o-h_c)/2\}$: this is because verifying
\eqref{eq:largestjump:LB} with correlated Gaussian disorder requires
more involved technical estimates than those needed in the case of
i.i.d.\ charges.

\begin{proof}[Proof of~\eqref{eq:largestjump:UB}]
Given $h_o>h_c$ and $\eps\in(0,1]$, one deduces from some quenched
computations (see~\cite[Equation~(37)]{GZ25concentration}) that for all
$n\in\N$ and $\go\in\gO$
\[
\sup_{h\in U_{h_o,\eps}} \prob_{n,h,\go}\left[\frac{M_n}{\log n}>\frac{1+\eps}{\mu(h)}\right] \leq n^{-\gz_o\frac{1+\eps}{\mu_o+\gd_o}} \sum_{i=0}^{n-1} \gL(\gth^i\go)\,,
\]
where $\mu_o\ceq \mu(h_o)>0$, $\gz_o\ceq (1-\eps/4)\mu_o$, $\gth$ is
the shift operator on $\Omega$, and $\gL$ is the following random
variable on $\Omega$:
\[
\gL\ceq \sum_{j\in\N} \ee^{(\gz_o+\gd_o)j}\prob_{j,h_o,\cdot}[T_1=j]\le\sum_{j\in\N} \frac{\ee^{(\gz_o+\gd_o)j}}{Z_{j,h_o}^{\textbf{-}}}\,.
\]
Proposition~\ref{prop:mu} yields $\bbE[\gL]<+\infty$ since
$\zeta_o+\delta_o<\mu_o$. On the other hand, a direct analysis shows
that $\gz_o\frac{1+\eps}{\mu_o+\gd_o}\geq 1+\eps/3$. In this way,
recalling that $\vartheta$ is an ergodic transformation within our
setting, we obtain~\eqref{eq:largestjump:UB} by applying Birkhoff's
ergodic theorem.
\end{proof}

\begin{proof}[Proof of~\eqref{eq:largestjump:LB}]

The proof of the lower bound~\eqref{eq:largestjump:LB} is more
involved and additionally requires Gaussian
hypercontractivity. In~\cite{GZ25concentration} the inequality
\eqref{eq:largestjump:LB} is obtained for i.i.d.\ disorder through a
\textit{dilution procedure} that considers only trajectories making
exactly $r_n$ jumps of length $m_n$ at a distance $d_n$ from one
another, where $r_n$, $m_n$, and $d_n$ are suitably chosen. The
following bound summarize the outcome of this dilution procedure.
Given $h_o>h_c$ and $\eps\in(0,1]$, we define $\mu_o\ceq\mu(h_o)>0$
for brevity and we set
\[
m_n\ceq \left\lfloor\tfrac{1-\eps}{\mu_o-\gd_o}\log n\right\rfloor +1\,,\quad d_n \ceq \left\lfloor (\log n)^2 \right\rfloor +1\,,\quad
\text{and}\quad r_n \ceq \left\lfloor \tfrac{n}{m_n+d_n}\right\rfloor\,.
\]
Then, some quenched computations
(see~\cite[Equation~(39)]{GZ25concentration}) yield for all $n\in\N$ and
$\omega\in\Omega$ the bound
\begin{align}
  \nonumber
  &\sup_{h\in U_{h_o,\eps}}\prob_{n,h,\omega}\bigg[\frac{M_n}{\log n}<\frac{1-\epsilon}{\mu(h)}\bigg]\\
  \nonumber
  &\qquad\le \prod_{s=0}^{r_n-1}\Bigg\{1-\inf_{h\in U_{h_o,\eps}}\Ex_{n,h,\omega}\bigg[X_{l_s}\prod_{k=l_s+1}^{l_s+m_n-1}(1-X_k)X_{l_s+m_n}\bigg]\Bigg\}\\
  &\qquad\quad+2\sum_{s=0}^{r_n-2}\,\sum_{i=0}^{l_s+m_n-1}\sum_{j=l_{s+1}}^\infty\sup_{h\in [h_o-\delta_o,+\infty)}\Ex_{j-i,h,\vartheta^i\omega}^{\otimes 2}\bigg[\prod_{k=1}^{j-i-1}(1-X_kX_k')\bigg]\,,
    \label{eq:largestjump:LB_0}
\end{align}
where $l_s\ceq s(m_n+d_n)$. 
\smallskip

Let us first show that for $\pae$
\begin{equation}
  \lim_{n\uparrow\infty}
  \sum_{s=0}^{r_n-2}\,\sum_{i=0}^{l_s+m_n-1}\sum_{j=l_{s+1}}^\infty \sup_{h\in [h_o-\delta_o,+\infty)}\Ex_{j-i,h,\vartheta^i\omega}^{\otimes 2}\bigg[\prod_{k=1}^{j-i-1}(1-X_kX_k')\bigg]=0\,.
    \label{eq:largestjump:LB_1}
\end{equation}
Indeed, as $h_o-\delta_o>h_c$, according to Lemma
\ref{lem:supercrit:polymercorrelations} there exist positive constants
$C$ and $\lambda$ such that
\begin{equation*}
\Exd\Bigg[\sup_{h\in[h_o-\delta_o,+\infty)}\Ex_{j,h,\cdot}^{\otimes 2}\bigg[\prod_{k=1}^{j-1}(1-X_kX_k')\bigg]\Bigg]\le C\ee^{-\lambda j}
\end{equation*}
for all $j\in\N$. In this way, introducing the random variable
\begin{equation*}
  \Lambda:=\sum_{j=1}^\infty \ee^{\frac{\lambda}{2}j}\sup_{h\in[h_o-\delta_o,+\infty)}\Ex_{j,h,\cdot}^{\otimes 2}\bigg[\prod_{k=1}^{j-1}(1-X_kX_k')\bigg]\,,
\end{equation*}
we have $\Exd[\Lambda]<+\infty$ and a Chernoff-type bound gives for
all $n\in\N$ and $\omega\in\Omega$
\begin{align}
  \nonumber
  &\sum_{s=0}^{r_n-2}\,\sum_{i=0}^{l_s+m_n-1}\sum_{j=l_{s+1}}^\infty\sup_{h\in[h_o-\delta_o,+\infty)}\Ex_{j-i,h,\vartheta^i\omega}^{\otimes 2}\bigg[\prod_{k=1}^{j-i-1}(1-X_kX_k')\bigg]\\
  \nonumber
  &\qquad\le
  r_n\sum_{i=0}^{n-1}\sum_{j=d_n}^\infty \sup_{h\in[h_o-\delta_o,+\infty)}\Ex_{j,h,\vartheta^i\omega}^{\otimes 2}\bigg[\prod_{k=1}^{j-1}(1-X_kX_k')\bigg]
    \le  r_n\ee^{-\frac{\lambda}{2}d_n}\sum_{i=0}^{n-1}\Lambda(\vartheta^i\omega)\,.
\end{align}
Thus, \eqref{eq:largestjump:LB_1} follows from Birkhoff's ergodic theorem
since $r_n\le n$ and $d_n\ge (\log n)^2$.

\smallskip

In the light of \eqref{eq:largestjump:LB_0} and
\eqref{eq:largestjump:LB_1}, \eqref{eq:largestjump:LB} follows if we
demonstrate that for $\pae$,
\begin{equation*}
  \lim_{n\uparrow\infty}
  \prod_{s=0}^{r_n-1}\Bigg\{1-\inf_{h\in U_{h_o,\eps}}\Ex_{n,h,\omega}\bigg[X_{l_s}\prod_{k=l_s+1}^{l_s+m_n-1}(1-X_k)X_{l_s+m_n}\bigg]\Bigg\}=0\,.
\end{equation*}
To this aim, recall the constant $\xi$ defined
in \eqref{eq:defxi}. Then some more quenched computations (see Equations~(42--44) in
\cite{GZ25concentration} before that the restrictions $\omega_{l_s}\ge
0$ and $\omega_{l_s+m_n}\ge 0$ are imposed) yield that
\begin{align}
  \nonumber
  &\inf_{h\in U_{h_o,\eps}}\Ex_{n,h,\omega}\bigg[X_{l_s}\prod_{k=l_s+1}^{l_s+m_n-1}(1-X_k)X_{l_s+m_n}\bigg]\\
  \nonumber
  &\qquad\ge\frac{1-\sup_{h\in U_{h_o,\eps}}\prob_{n,h,\omega}\big[\frac{M_n}{\log n}>
      \frac{1+\epsilon}{\mu(h)}\big]}{1+\xi\ee^{-h_o+\delta_o-\omega_{l_s}}\big(\frac{1+\epsilon}{\mu_o-\delta_o}\log n\big)^\xi}\,
  \frac{1}{1+\xi \ee^{-h_o+\delta_o-\omega_{l_s+m_n}}m_n^\xi}\\
   &\qquad\qquad\qquad\qquad\qquad\qquad\qquad\qquad\qquad\quad\times\ee^{-\delta_o m_n}\prob_{m_n,h_o,\vartheta^{l_s}\omega}[T_1=m_n]\,.
 \label{eq:largestjump:LB_00}
\end{align}
Then, we remark that \eqref{eq:largestjump:UB} implies that for
$\pae$ the upper bound
\begin{equation*}
  \sup_{h\in U_{h_o,\epsilon}}\prob_{n,h,\omega}\bigg[\frac{M_n}{\log n}>\frac{1+\epsilon}{\mu(h)}\bigg] \le  \frac{1}{2}
\end{equation*}
is valid for all sufficiently large $n$. Moreover, as in the proof of
Lemma~\ref{lem:mu:convergencesupercrit}, a union bound and a standard
Gaussian tail estimate assure us that for all $n\in\N$
\begin{equation*}
  \int_\Omega\ind_{\big\{\max\{|\omega_0|,\ldots,|\omega_{n-1}|\}>\sqrt{6\gamma(0) \log n}\big\}}\probd[\dd\omega]\le \frac{2}{n^2}\,,
  \end{equation*}
so the Borel-Cantelli lemma implies that
$\max\{|\omega_0|,\ldots,|\omega_{n-1}|\}\le\sqrt{6\gamma(0) \log n}$
for all $n$ large enough for $\pae$. Combining
\eqref{eq:largestjump:LB_00} with these arguments we find for $\pae$
\begin{align}
  \nonumber
  &\limsup_{n\uparrow\infty}\prod_{s=0}^{r_n-1}\Bigg\{1-\inf_{h\in U_{h_o,\eps}}\Ex_{n,h,\omega}\bigg[X_{l_s}\prod_{k=l_s+1}^{l_s+m_n-1}(1-X_k)X_{l_s+m_n}\bigg]\Bigg\}\\
  &\qquad\qquad\qquad\qquad\qquad\qquad\le\limsup_{n\uparrow\infty}\prod_{s=0}^{r_n-1}\Big\{1-\zeta_n\ee^{-\delta_o m_n}\prob_{m_n,h_o,\vartheta^{l_s}\omega}[T_1=m_n]\Big\}\,,
   \label{eq:largestjump:LB_2}
\end{align}
where $\zeta_n$ is a numerical factor defined by
\begin{equation*}
  \zeta_n\ceq\frac{1}{2}\,\frac{1}{1+\xi\ee^{-h_o+\delta_o+\sqrt{6\gamma(0) \log n}}\big(\frac{1+\epsilon}{\mu_o-\delta_o}\log n\big)^\xi}\,
  \frac{1}{1+\xi \ee^{-h_o+\delta_o+\sqrt{6\gamma(0) \log n}}\,m_n^\xi}\,.
\end{equation*}
We shall prove that there exists $\lambda>0$ such that for
all $n$ large enough
\begin{equation}
  E_n\ceq \bbE\left[\prod_{s=0}^{r_n-1}\Big\{1-\zeta_n\ee^{-\delta_o m_n}\prob_{m_n,h_o,\vartheta^{l_s}\cdot}[T_1=m_n]\Big\}\right]\leq \exp\left(-\lambda\frac{\gz_nr_n}{n^{1-\eps/2}}\right)
  +\frac{1}{n^2}\,.
 \label{eq:largestjump:LB_3}
\end{equation}
This yields $\sum_{n\in\N}E_n<+\infty$ since
$\liminf_{n\uparrow\infty} (\log n)^{2\xi+2}\ee^{2\sqrt{6\gamma(0)
    \log n}}\gz_n r_n/n>0$, so a second application of the
Borel-Cantelli lemma entails that the right-hand side of
\eqref{eq:largestjump:LB_2} is zero for $\pae$, and this concludes the
proof of \eqref{eq:largestjump:LB}.

\smallskip

The bound \eqref{eq:largestjump:LB_3} is due to Gaussian
hypercontractivity. However we cannot directly apply Proposition
\ref{prop:hypercontract} to the expectation $E_n$ because we do not have any quantitative
assumption of the form $\sum_{a=1}^n\sum_{b=1}^nu_a\gamma(|a-b|)u_b\ge\eta$ for some
$\eta>0$ and all $n\in\N$, $(u_1,\ldots,u_n)\in\mathbb{S}_n$. Therefore we proceed in
two steps: 
we first perturb the random environment by adding small independent
Gaussian charges and after that we use Proposition
\ref{prop:hypercontract}. So, let $\probd'$ on $(\Omega,\mathcal{F})$
be the law of independent Gaussian random variables with mean zero and
variance $\eta>0$ so small that
\begin{equation}
  \eta\le\delta_o\quad\text{and}\quad\frac{\delta_o}{\sqrt{\eta}}\ge\frac{1}{2}+\log 2+3\frac{\mu_o-\delta_o}{1-\eps}\,.
\label{eq:eta_conditions}
\end{equation}
Let also $\probd_\ast$ be the convolution of the laws $\probd$ and
$\probd'$, which is a probability measure on $(\Omega,\mathcal{F})$
such that
$\int_\Omega\phi(\omega)\,\probd_\ast[\dd\omega]=\int_\Omega\probd'[\dd\omega']\int_\Omega\probd[\dd\omega]\phi(\omega+\omega')$
for all bounded measurable functions $\phi:\Omega\to\R$. Well-known
properties of Gaussian random variables imply that $\probd_\ast$
satisfies Assumption \ref{assum:gauss} with covariance function
$\gamma_\ast$ given by $\gamma_\ast(0)=\gamma(0)+\eta$ and
$\gamma_\ast(a)=\gamma(a)$ for $a\in\N$. Since
$\sum_{a=1}^n\sum_{b=1}^nu_a\gamma(|a-b|)u_b\ge0$ for all $n\in\N$ and
$(u_1,\ldots,u_n)\in\mathbb{S}_n$, we have
$\sum_{a=1}^n\sum_{b=1}^nu_a\gamma_\ast(|a-b|)u_b\ge\eta$ for all
$n\in\N$ and $(u_1,\ldots,u_n)\in\mathbb{S}_n$, so Proposition
\ref{prop:hypercontract} applies if we replace $\probd$ with
$\probd_\ast$.

We are now ready to demonstrate that \eqref{eq:largestjump:LB_3} holds
with $\lambda\ceq \eta/(2\eta+2\bar\gamma)$.  Using
$1\le\prod_{s=0}^{r_n-1}\ind_{\{\sum_{a=1}^{m_n}|\omega'_{l_s+a}|\le
  \delta_om_n\}}+\sum_{s=0}^{r_n-1}\ind_{\{\sum_{a=1}^{m_n}|\omega'_{l_s+a}|>\delta_om_n\}}$
for any $\omega'=(\omega'_a)_{a\in\N_0}\in\Omega$, we can write down
for all $n\in\N$ the bound
\begin{align}
  \nonumber
    E_n&\le\int_\Omega\probd'[\dd\omega']\int_\Omega\probd[\dd\omega]
    \prod_{s=0}^{r_n-1}\ind_{\big\{\sum_{a=1}^{m_n}|\omega'_{l_s+a}|\le \delta_om_n\big\}}\Big\{1-\zeta_n\ee^{-\delta_o m_n}\prob_{m_n,h_o,\vartheta^{l_s}\omega}[T_1=m_n]\Big\}\\
    &\quad+\sum_{s=0}^{r_n-1}\int_\Omega \ind_{\big\{\sum_{a=1}^{m_n}|\omega'_{l_s+a}|> \delta_om_n\big\}}\probd'[\dd\omega']\,.
    \label{eq:largestjump:LB_6}
\end{align}
However for all $\omega=(\omega_a)_{a\in\N_0}\in\Omega$,
$\omega'=(\omega_a')_{a\in\N_0}\in\Omega$, and
$s\in\{0,\ldots,r_n-1\}$ we have
\begin{align}
  \nonumber
  \prob_{m_n,h_o,\vartheta^{l_s}\omega}[T_1=m_n]&=\frac{p(m_n)}{\Ex\big[\ee^{\sum_{a=1}^{m_n-1}(h+\omega_{l_s+a})X_a}X_{m_n}\big]}\\
  \nonumber
  &\ge\frac{p(m_n)}{\Ex\big[\ee^{\sum_{a=1}^{m_n-1}(h+\omega_{l_s+a}+\omega'_{l_s+a})X_a+\sum_{a=1}^{m_n}|\omega'_{l_s+a}|}X_{m_n}\big]}\\
  &=\ee^{-\sum_{a=1}^{m_n}|\omega'_{l_s+a}|} \,\prob_{m_n,h_o,\vartheta^{l_s}(\omega+\omega')}[T_1=m_n]\,.
\label{eq:largestjump:LB_7}
\end{align}
At the same time, recalling that $\omega_0', \omega_1',\ldots$ are
independent normal charges with mean zero and variance $\eta$, a
Chernoff bound implies
\begin{align*}
   \sum_{s=0}^{r_n-1}\int_\Omega \ind_{\big\{\sum_{a=1}^{m_n}|\omega'_{l_s+a}|> \delta_om_n\big\}}\probd'[\dd\omega']&=r_n\int_\Omega \ind_{\big\{\sum_{a=1}^{m_n}|\omega'_a|> \delta_om_n\big\}}\probd'[\dd\omega']\\
   &\le r_n\bigg(\int_\Omega \ee^{\frac{|\omega'_0|-\delta_o}{\sqrt{\eta}}}\probd'[\dd\omega']\bigg)^{\!m_n} =
   r_n\bigg(2\ee^{\frac{1}{2}-\frac{\delta_o}{\sqrt{\eta}}}\bigg)^{m_n} \,.
\end{align*}
Since $r_n\le n$ and $m_n\ge [(1-\eps)/(\mu_o-\delta_o)]\log n$ by
definition, the second of the conditions \eqref{eq:eta_conditions}
satisfied by $\eta$ allows us to conclude that
\begin{equation}
   \sum_{s=0}^{r_n-1}\int_\Omega \ind_{\big\{\sum_{a=1}^{m_n}|\omega'_{l_s+a}|> \delta_om_n\big\}}\probd'[\dd\omega']\le\frac{1}{n^2}\,.
\label{eq:largestjump:LB_8}
\end{equation}
Combining \eqref{eq:largestjump:LB_7} with
$\sum_{a=1}^{m_n}|\omega'_{l_s+a}|>\delta_om_n$ and using
\eqref{eq:largestjump:LB_8}, bound \eqref{eq:largestjump:LB_6} then
shows that for all $n\in\N$
\begin{align*}
    E_n&\le\int_\Omega\probd'[\dd\omega']\int_\Omega\probd[\dd\omega]
    \prod_{s=0}^{r_n-1}\Big\{1-\zeta_n\ee^{-2\delta_om_n}\prob_{m_n,h_o,\vartheta^{l_s}(\omega+\omega')}[T_1=m_n]\Big\}+\frac{1}{n^2}\\
    &=\int_\Omega
    \prod_{s=0}^{r_n-1}\Big\{1-\zeta_n\ee^{-2\delta_om_n}\prob_{m_n,h_o,\vartheta^{l_s}\omega}[T_1=m_n]\Big\}\probd_\ast[\dd\omega]+\frac{1}{n^2}\,.
\end{align*}
In this way, invoking Proposition \ref{prop:hypercontract} and noting
that $(2\eta+2\bar\gamma)/\eta>1$, we find that
\begin{align*}
    E_n&\le\prod_{s=0}^{r_n-1}\Bigg\{1- \zeta_n\ee^{-2\delta_om_n}\int_\Omega
   \prob_{m_n,h_o,\vartheta^{l_s}\omega}[T_1=m_n]\,\probd_\ast[\dd\omega]\Bigg\}^{\!\frac{\eta}{2\eta+2\bar\gamma}}+\frac{1}{n^2}\\
    & =\Big\{1- \zeta_n\ee^{-2\delta_om_n}\Exd_\ast\big[\prob_{m_n,h_o,\cdot}[T_1=m_n]\big]\Big\}^{\!\lambda r_n}+\frac{1}{n^2}\,,
\end{align*}
where we have put $\lambda\ceq \eta/(2\eta+2\bar\gamma)$ into context
and we have denoted by $\Exd_\ast$ the expectation with respect to
$\probd_\ast$.  As a last manipulation, we appeal to Lemma
\eqref{lem:comparison} to restore the original law of the environment:
\begin{align*}
  \Exd_\ast\big[\prob_{m_n,h_o,\cdot}[T_1=m_n]\big]&=\Exd_\ast\bigg[\frac{p(m_n)}{Z_{m_n,h_o}^{\textbf{-}}}\bigg]\\
  &\ge\ee^{-\eta m_n}\Exd\bigg[\frac{p(m_n)}{Z_{m_n,h_o}^{\textbf{-}}}\bigg]=\ee^{-\eta m_n} \Exd\big[\prob_{m_n,h_o,\cdot}[T_1=m_n]\big]\,.
\end{align*}
Since the first of the conditions \eqref{eq:eta_conditions} is
$\eta\le\delta_o$, we conclude that for all $n\in\N$
\begin{equation}
    E_n\le\Big\{1- \zeta_n\ee^{-3\delta_om_n}\Exd\big[\prob_{m_n,h_o,\cdot}[T_1=m_n]\big]\Big\}^{\!\lambda r_n}+\frac{1}{n^2}\,.
\label{eq:largestjump:LB_11}
\end{equation}

We are almost done. As $p(t)$ decays polynomially with $t$,
Proposition \ref{prop:mu} assures us that for all sufficiently large
$n$
\begin{equation*}
    \Exd\big[\prob_{m_n,h_o,\cdot}[T_1=m_n]\big]=p(m_n)\Exd\bigg[\frac{1}{Z_{m_n,h_o}^{\mathbf{-}}}\bigg]\ge\ee^{-(\mu_o+\delta_o)(m_n-1)+3\delta_o}\,.
\end{equation*}
Using this bound in \eqref{eq:largestjump:LB_11} and recalling that
$m_n\ge [(1-\epsilon)/(\mu_o-\delta_o)]\log n$, for all sufficiently
large $n$ we get
\begin{align}
  \nonumber
  E_n&=\Big\{1- \zeta_n\ee^{-(\mu_o+4\delta_o)(m_n-1)}\Big\}^{\!\lambda r_n}+\frac{1}{n^2}\\
  \nonumber
  &\le \exp\bigg\{\!-\lambda\zeta_nr_n\ee^{-(\mu_o+4\delta_o)(m_n-1)}\bigg\}+\frac{1}{n^2}
  \le \exp\Bigg\{\!-\frac{\zeta_nr_n}{n^{(\mu_o+4\delta_o)\big(\frac{1-\epsilon}{\mu_o-\delta_o}\big)}}\Bigg\}+\frac{1}{n^2}\,.
\end{align}
This proves \eqref{eq:largestjump:LB_3} because using that
$\delta_o\le\epsilon\mu_o/16$ by definition and that $\epsilon\le 1$
we deduce 
\begin{equation*}
  (\mu_o+4\delta_o)\frac{1-\epsilon}{\mu_o-\delta_o}
  \le 1-\epsilon\frac{5+2\epsilon}{8-\epsilon}\le1-\frac{\epsilon}{2}\,.
\end{equation*}
The proof of \eqref{eq:largestjump:LB} is thus concluded.
\end{proof}

\subsection{Smoothness and quenched CLT}
We now turn to the proofs of Theorems~\ref{thm:fsmooth}
and~\ref{thm:CLT:Ln:quenched}.

\begin{proof}[Proof of Theorem~\ref{thm:fsmooth}]
This result is a straightforward adaptation
of~\cite[Theorem~1.2]{GZ25concentration} in the i.i.d.\ case: since
all the arguments below are a carbon copy of those
in~\cite{GZ25concentration}, we do not reproduce all the details
here. First of all, denoting by $\var_{n,h,\go}$ the variance under
the polymer measure $\prob_{n,h,\go}$, we claim that for all $h>h_c$
\begin{equation}\label{eq:fsmooth:varianceLB}
\liminf_{n\uparrow\infty} \bbE\left[\frac{\var_{n,h,\cdot}[L_n]}{n}\right] >0\,.
\end{equation}
Indeed, this is a direct consequence of the quenched estimates
established by Equations~(46-47) in~\cite{GZ25concentration}, together
with Birkhoff's ergodic theorem and the fact that
$\liminf_{n\uparrow\infty} (1/n)\bbE[\Ex_{n,h,\cdot}[L_n]]>0$ for
$h>h_c$. Moreover, we also have that for every $h\in\R$, $\go\in\gO$,
and integers $n\geq 1$ and $r\geq 2$
(see~\cite[Lemma~4.2]{GZ25concentration})
\begin{equation}\label{eq:cumulants}
\big|\partial^r_h \log Z_{n,h}(\go)\big| \leq 2^r (r!)^2 \sum_{i=0}^{n-1} \sum_{j\in\N} j^r \Ex^{\otimes2}_{j+1,h,\gth^i\go}\bigg[\prod_{k=1}^{j} (1-X_kX_k')\bigg]\,.
\end{equation}
Thereafter, the smoothness of the free energy and the convergence
statements are obtained with an application of the Arzel\`a-Ascoli
theorem to the derivatives of the finite-volume free energy, where
their equicontinuity follows from~\eqref{eq:cumulants},
Lemma~\ref{lem:supercrit:polymercorrelations}, and Birkhoff's ergodic
theorem. Finally, the strict convexity of $f$ is a consequence
of~\eqref{eq:fsmooth:varianceLB} since $\partial_h^2\log
Z_{n,h}=\var_{n,h,\cdot}[L_n]$, and this concludes the proof of the
theorem.
\end{proof}

\begin{proof}[Proof of Theorem~\ref{thm:CLT:Ln:quenched}]
  Exactly as in the i.i.d.\ case, both the concentration inequality
  and the CLT are obtained via the cumulant method for normal
  approximation (see~\cite[Theorem~1.4]{GZ25concentration}
  and~\cite[Theorems~2.4~and~2.5]{DJS22}). In particular, the
  convergence of the variance $\lim_{n\uparrow\infty}
  (1/n)\var_{n,h,\go}[L_n]=\lim_{n\uparrow\infty}
  (1/n)\partial_h^2\log Z_{n,h}(\omega)=\partial^2_h f(h)\eqc v(h)$,
  locally uniformly in $h\in(h_c,+\infty)$ for $\pae$, is a
  consequence of Theorem~\ref{thm:fsmooth}, and a
  \emph{Statulevi\v{c}ius condition} follows from~\eqref{eq:cumulants}
  and Birkhoff's ergodic theorem. Again, these arguments are a carbon
  copy of those in~\cite{GZ25concentration}, so we do not reproduce
  them here.
\end{proof}

\section{The random centering term}
\label{sec:centeringterm}

In this section we investigate the fluctuations of the random
centering term $\Ex_{n,h,\cdot}[L_n]$ in order to prove Theorem
\ref{th:centering}. Part $(i)$ of Theorem \ref{th:centering} is a
consequence of Theorem~\ref{thm:fsmooth} and
Corollary~\ref{corol:correlations:polym}, together with the following
identities, which are Equations (62) and (63) in
\cite{GZ25concentration}: for every $h\in\Rl$, $\omega\in\Omega$, and
integers $0\le a\le m\le b\le n$
  \begin{equation}
    \Ex_{n,h,\omega}[X_a]=\Ex_{m,h,\omega}[X_a]-\frac{\cov_{n,h,\omega}[X_a,X_m]}{\Ex_{n,h,\omega}[X_m]}
    \label{basic_tool_1}
  \end{equation}
  and
  \begin{equation}
    \Ex_{n,h,\omega}[X_b]=\Ex_{n-m,h,\vartheta^m\omega}[X_{b-m}]-\frac{\cov_{n,h,\omega}[X_m,X_b]}{\Ex_{n,h,\omega}[X_m]}\,.
    \label{basic_tool_2}
  \end{equation}
The proof of part $(i)$ of Theorem \ref{th:centering} proceeds exactly
as that of part $(ii)$ of \cite[Theorem~1.3]{GZ25concentration};
therefore we omit the details.

\subsection{Correlations under the environment measure}

The study of the variance of the centering term, as well as the proof
of its CLT, requires the following correlation estimate, which extends
\cite[Corollary~3.1]{GZ25concentration} to our setting.

\begin{lemma}
  \label{lem:correlations:envt}
  For every $H\subset(h_c,+\infty)$ closed there exist $C>0$ and
  $\lambda>0$ such that for all $n\in\N$ and $a,b\in\{0,\ldots, n\}$
  \begin{align*}
    &\sup_{h\in H}\Big|\Exd\big[\Ex_{n,h,\cdot}[X_a]\Ex_{n,h,\cdot}[X_b]\big]-\Exd\big[\Ex_{n,h,\cdot}[X_a]\big]\Exd\big[\Ex_{n,h,\cdot}[X_b]\big]\Big|\\
    &\qquad\qquad\qquad\qquad\qquad\qquad\qquad\le C\sum_{i\in\N}\sum_{j\in\N}\big|\gamma(|i-j|)\big|\ee^{-\lambda|a-i|-\lambda|b-j|}\,.
  \end{align*}
\end{lemma}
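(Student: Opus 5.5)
The estimate should follow from the Gaussian covariance formula for smooth functions, which is an interpolation argument of the same type as in the proof of Lemma~\ref{lem:comparison}. Fix $h$, $n$, $a,b$ and set $F\ceq\Ex_{n,h,\cdot}[X_a]$ and $G\ceq\Ex_{n,h,\cdot}[X_b]$. Both are smooth bounded functions of $(\omega_1,\ldots,\omega_n)$. Let $\omega'$ be an independent copy of $\omega$ and put $\omega^z\ceq z\omega+\sqrt{1-z^2}\,\omega'$. Gaussian integration by parts gives
\begin{equation*}
\Exd[FG]-\Exd[F]\Exd[G]=\int_0^1\dd z\sum_{i=1}^n\sum_{j=1}^n\gamma(|i-j|)\,\Exd\otimes\Exd\big[\partial_{\omega_i}F(\omega)\,\partial_{\omega_j}G(\omega^z)\big]\,.
\end{equation*}
This is the classical formula interpolating between $\omega$ and a partially independent copy. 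Its derivation is the one used in \eqref{eq:comparison_2}, now with $\Lambda(\omega)$ replaced by $F(\omega)G(z\omega+\sqrt{1-z^2}\omega')$ and with $z$ running from $0$ (independence) to $1$.

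Next I compute the derivatives. Since $\partial_{\omega_i}\Ex_{n,h,\omega}[X_a]=\cov_{n,h,\omega}[X_a,X_i]$, the bound becomes
\begin{equation*}
\sum_{i,j}|\gamma(|i-j|)|\int_0^1\dd z\,\Exd\otimes\Exd\Big[\sup_{h\in H}|\cov_{n,h,\omega}[X_a,X_i]|\,\sup_{h\in H}|\cov_{n,h,\omega^z}[X_b,X_j]|\Big]\,.
\end{equation*}
Bounding $|\cov|\le|\cov|/\min\{\Ex[X_a],\Ex[X_i]\}$ and using Corollary~\ref{corol:correlations:polym}, each factor has expectation at most $C\ee^{-\lambda|a-i|}$. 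Here I use that $\omega^z$ has law $\probd$ for every $z$, so the corollary applies to the second factor as well.

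The main obstacle is that the two factors are dependent, so the expectation of the product is not the product of the expectations. Both factors are bounded by $1$, so by Cauchy--Schwarz
\begin{equation*}
\Exd\otimes\Exd[UV]\le\sqrt{\Exd[U^2]\,\Exd[V^2]}\le\sqrt{\Exd[U]\,\Exd[V]}\le C\ee^{-\frac{\lambda}{2}|a-i|-\frac{\lambda}{2}|b-j|}\,.
\end{equation*}
Renaming $\lambda/2$ as $\lambda$ yields the claim. The supremum over $h\in H$ is handled by taking it inside before integrating, since
\begin{equation*}
\sup_h|\Exd[FG]-\Exd F\,\Exd G|\le\sum_{i,j}|\gamma|\int_0^1\dd z\,\Exd\otimes\Exd[\sup_h U\,\sup_h V]\,,
\end{equation*}
and Corollary~\ref{corol:correlations:polym} already contains the $\sup_{h\in H}$ inside the expectation.
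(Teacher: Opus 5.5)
Your proposal is correct and follows essentially the same route as the paper: Gaussian interpolation with $\partial_{\omega_i}\Ex_{n,h,\omega}[X_a]=\cov_{n,h,\omega}[X_a,X_i]$, then Cauchy--Schwarz together with $\cov^2\le|\cov|$ and Corollary~\ref{corol:correlations:polym}, and the square root halves the exponent, as in the paper. The only cosmetic difference is that the paper fixes $h$ and uses a bound on $\sup_{h\in H}\Exd[\cdot]$, while you move the supremum inside the expectation before applying the corollary; both are valid.
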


\begin{remark}
  \label{rm:correlations:envt}
        {\rm For covariance functions $\gamma$ that decay
          exponentially or super exponentially fast,
          Lemma~\eqref{lem:correlations:envt} ensures exponential
          decay with the distance of the involved covariances. This is
          the case, for instance, of i.i.d.\ charges considered
          in~\cite{GZ25concentration}.  For other choices of $\gamma$,
          the lemma provides a slower decay.  In general, the decay
          with the distance of the covariances in
          Lemma~\eqref{lem:correlations:envt} must be sufficiently
          rapid to yield a concentration bound and a CLT for the
          centering term via the cumulant method used
          in~\cite{GZ25concentration}; this is not guaranteed in our
          setting without imposing strong additional assumptions on
          $\gamma$.}
\end{remark}

\begin{proof}[Proof of Lemma~\ref{lem:correlations:envt}]
Pick a closed set $H\subset(h_c,+\infty)$. Noting that
$-1\le\cov_{n,h,\cdot}[X_a,X_b]\le1$,
Corollary~\ref{corol:correlations:polym} assures us that there exist
positive constants $C$ and $\lambda$ such that for all $n\in\N$ and
$a,b\in\{0,\ldots, n\}$
\begin{equation}
\sup_{h\in H}\bbE\big[\cov_{n,h,\cdot}[X_a,X_b]^2\big] \le\sup_{h\in H}\bbE\big[|\cov_{n,h,\cdot}[X_a,X_b]|\big] \leq C\ee^{-2\lambda|b-a|}\,.
\label{prop:correlations:envt_0}
\end{equation}
The proposition holds with such $C$ and $\lambda$, as we now verify.

\smallskip

Fix $h\in H$, $n\in\N$, and $a,b\in\{0,\ldots, n\}$. Set for brevity
$\Lambda\ceq\Ex_{n,h,\cdot}[X_a]$ and
$\Upsilon\ceq\Ex_{n,h,\cdot}[X_b]$. A standard argument of Gaussian
interpolation and integration by parts (see
\cite[Lemma~6.2]{Bis20notes}) gives
\begin{align*}
  &\Exd[\Lambda\Upsilon]-\Exd[\Lambda]\Exd[\Upsilon]\\
  &\qquad= 
\int_0^1\dd z \sum_{i=1}^n\sum_{j=1}^n\gamma(|i-j|)\int_{\Omega}\probd[\dd\omega]\int_{\Omega}\probd[\dd\omega']\,
\partial_{\omega_i}\Lambda\big(\omega\big)\partial_{\omega_j}\Upsilon\big(z\omega+\sqrt{1-z^2}\omega'\big)\,.
\end{align*}
Observe that
$\partial_{\omega_i}\Lambda(\omega)=\cov_{n,h,\omega}[X_a,X_i]$ and
$\partial_{\omega_j}\Upsilon(\omega)=\cov_{n,h,\omega}[X_b,X_j]$ for
all $i,j\in\{1,\ldots,n\}$ and $\omega\in\Omega$. Also note that,
thanks to well-known properties of Gaussian random variables, for any
$z\in[0,1]$ the sequence $z\omega+\sqrt{1-z^2}\omega'$ is an outcome
of an environment with law $\probd$. Then, the Cauchy-Schwarz
inequality allows us to conclude that
\begin{equation*}
  \big|\Exd[\Lambda\Upsilon]-\Exd[\Lambda]\Exd[\Upsilon]\big|
  \le \sum_{i=1}^n\sum_{j=1}^n\big|\gamma(|i-j|)\big|\sqrt{\Exd\big[\cov_{n,h,\cdot}[X_a,X_i]^2\big]\Exd\big[\cov_{n,h,\cdot}[X_b,X_j]^2\big]}\,.
\end{equation*}
This bound proves the proposition because of \eqref{prop:correlations:envt_0}. 
\end{proof}

The following technical lemma is helpful when applying the bound
introduced in Lemma~\ref{lem:correlations:envt}.
\begin{lemma}
  \label{lem:sums}
  For all finite sets $A,B\subseteq\N$ and for all $\lambda>0$
  \begin{align*}
  &\sum_{a\in A}\sum_{b\in B}\sum_{i\in\N}\sum_{j\in\N}\big|\gamma(|i-j|)\big|\ee^{-\lambda|a-i|-\lambda|b-j|}\\
  &\qquad\le\frac{16\bar\gamma}{(1-\ee^{-\lambda})^3}+\frac{4}{(1-\ee^{-\lambda})^2}\sum_{i=\min A}^{\max A}\sum_{j=\min B}^{\max B}\big|\gamma(|i-j|)\big|\,.
\end{align*}
\end{lemma}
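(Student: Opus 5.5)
Set $q\ceq \ee^{-\lambda}$, $\alpha\ceq\min A$, $\beta\ceq\max A$, $\alpha'\ceq\min B$, $\beta'\ceq\max B$, and write $g(i,j)\ceq|\gamma(|i-j|)|$. The first move is to get rid of the sums over $a$ and $b$. For any interval $I=\{\alpha,\ldots,\beta\}$ and any $i\in\N$,
\[
\sum_{a\in A}q^{|a-i|}\le\sum_{a=\alpha}^{\beta}q^{|a-i|}\le \frac{2}{1-q}\,q^{\mathrm{d}(i,I)},
\]
where $\mathrm{d}(i,I)$ denotes the distance from $i$ to $I$. Indeed, if $i\in I$ the geometric sum is at most $(1+q)/(1-q)\le 2/(1-q)$, and otherwise it is at most $q^{\mathrm{d}(i,I)}/(1-q)$. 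Applying this in both variables, the left-hand side of the lemma is bounded by
\[
\frac{4}{(1-q)^2}\sum_{i\in\N}\sum_{j\in\N}g(i,j)\,q^{\mathrm{d}(i,I_A)+\mathrm{d}(j,I_B)},
\]
where $I_A=\{\alpha,\ldots,\beta\}$ and $I_B=\{\alpha',\ldots,\beta'\}$.

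Next, split the double sum over $(i,j)$ into two parts.

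\emph{Case $i\in I_A$ and $j\in I_B$.} Here the weight equals $1$, so this part contributes exactly $\frac{4}{(1-q)^2}\sum_{i=\alpha}^{\beta}\sum_{j=\alpha'}^{\beta'}g(i,j)$. This is the second term of the claimed bound.

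\emph{Case $i\notin I_A$ or $j\notin I_B$.} By symmetry, and up to a factor $2$, it suffices to treat $i\notin I_A$. Bound $q^{\mathrm{d}(j,I_B)}\le1$ and sum over $j$ first, using $\sum_{j\in\N}g(i,j)\le 2\bar\gamma$. This leaves $2\bar\gamma\sum_{i\notin I_A}q^{\mathrm{d}(i,I_A)}$. The points $i\notin I_A$ lie on the two sides of the interval, with distances $1,2,\ldots$ on each side, so this sum is at most $2\bar\gamma\cdot 2q/(1-q)\le 4\bar\gamma/(1-q)$. The symmetric case $j\notin I_B$ gives the same bound, so the total is at most $8\bar\gamma/(1-q)$.

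Multiplying this second part by $4/(1-q)^2$ gives at most $32\bar\gamma/(1-q)^3$. This exceeds the claimed constant $16$ by a factor $2$. The fix is to avoid the crude bound $(1+q)/(1-q)\le 2/(1-q)$ in the first step.
\begin{itemize}
\item Inside $I_A$, use the exact bound $(1+q)/(1-q)$.
\item Outside $I_A$, use the bound $q^{d}/(1-q)$ directly.
\end{itemize}
Then each off-interval contribution carries one factor $1/(1-q)$ from the $a$-sum and one from the sum over $d$. Redoing the count with these sharper factors, and tracking the two sides separately, should bring the constant down to $16$.

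The only real obstacle is this constant bookkeeping: making the crude factor-$2$ losses match the stated $16$ and $4$. The structure of the argument is routine. If the tight constant proves fiddly, the fallback is to bound $\sum_j g(i,j)\le\bar\gamma+\sum_{c\ge1}|\gamma(c)|$ separately on each side of $i$, which recovers a factor $2$.
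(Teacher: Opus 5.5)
Your proof is correct and is essentially the paper's argument. The paper also separates the hull $\{\min A,\ldots,\max A\}$ of $A$ (then of $B$) from the two sides, bounds the geometric sum over $a$ (resp.\ $b$) by $2/(1-\ee^{-\lambda})$ inside the hull, and outside the hull combines $\sum_{j}|\gamma(|i-j|)|\le 2\bar\gamma$ with a double geometric sum bounded by $1/(1-\ee^{-\lambda})^2$.

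Your refinement does close the constant. For $i\notin I_A$ you get $\frac{1}{1-q}\cdot\frac{2q}{1-q}\cdot\frac{2}{1-q}\cdot 2\bar\gamma\le\frac{8\bar\gamma}{(1-q)^3}$, and likewise for $j\notin I_B$, which is exactly the paper's count; so the fallback is not needed, and it would not in general save a factor $2$ anyway, since $\gamma(0)+2\sum_{c\ge1}|\gamma(c)|$ can be close to $2\bar\gamma$.
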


\begin{proof}[Proof of Lemma \ref{lem:sums}]
The idea of the proof is to extend the range of some indices and then
apply a shift in order to disentangle the factors. In particular,
given two finite sets $A,B\subseteq\N$ and a $\lambda>0$, we can split
the range $\N$ of the index $i$ into the components $\{1,\ldots,\min
A-1\}$, $\{\min A,\ldots,\max A\}$, and $\{\max A+1,\max A+2,\ldots\}$
to obtain the bound
\begin{align*}
  &\sum_{a\in A}\sum_{b\in B}\sum_{i\in\N}\sum_{j\in\N}\big|\gamma(|i-j|)\big|\ee^{-\lambda|a-i|-\lambda|b-j|}\\
  &\qquad\le\sum_{a=\min A}^\infty\sum_{i=1}^{\min A-1}\sum_{j\in\mathbb{Z}}\sum_{b\in\mathbb{Z}}\big|\gamma(|i-j|)\big|\ee^{-\lambda(a-i)-\lambda|b-j|}\\
  &\qquad\quad+\sum_{b\in B}\sum_{i=\min A}^{\max A}\sum_{j\in\N}\sum_{a\in\mathbb{Z}}\big|\gamma(|i-j|)\big|\ee^{-\lambda|a-i|-\lambda|b-j|}\\
  &\qquad\quad\quad+\sum_{a=1}^{\max A}\sum_{i=\max A+1}^\infty\sum_{j\in\mathbb{Z}}\sum_{b\in\mathbb{Z}}\big|\gamma(|i-j|)\big|\ee^{-\lambda(i-a)-\lambda|b-j|}\\
  &\qquad\le\frac{8\bar\gamma}{(1-\ee^{-\lambda})^3}+\frac{2}{1-\ee^{-\lambda}}\sum_{b\in B}\sum_{i=\min A}^{\max A}\sum_{j\in\N}\big|\gamma(|i-j|)\big|\ee^{-\lambda|b-j|}\,.
\end{align*}
By repeating a similar procedure for the index $j$ we conclude that
\begin{align*}
  &\sum_{a\in A}\sum_{b\in B}\sum_{i\in\N}\sum_{j\in\N}\big|\gamma(|i-j|)\big|\ee^{-\lambda|a-i|-\lambda|b-j|}\\
  &\qquad\le\frac{8\bar\gamma}{(1-\ee^{-\lambda})^3}+\frac{2}{1-\ee^{-\lambda}}\sum_{b=\min B}^\infty\sum_{j=1}^{\min B-1}\sum_{i\in\mathbb{Z}}\big|\gamma(|i-j|)\big|\ee^{-\lambda(b-j)}\\
  &\qquad\quad+\frac{2}{1-\ee^{-\lambda}}\sum_{i=\min A}^{\max A}\sum_{j=\min B}^{\max B}\sum_{b\in\mathbb{Z}}\big|\gamma(|i-j|)\big|\ee^{-\lambda|b-j|}\\
  &\qquad\quad\quad+\frac{2}{1-\ee^{-\lambda}}\sum_{b=1}^{\max B}\sum_{j=\max B+1}^\infty\sum_{i\in\mathbb{Z}}\big|\gamma(|i-j|)\big|\ee^{-\lambda(j-b)}\\
  &\qquad\le\frac{16\bar\gamma}{(1-\ee^{-\lambda})^3}+\frac{4}{(1-\ee^{-\lambda})^2}\sum_{i=\min A}^{\max A}\sum_{j=\min B}^{\max B}\big|\gamma(|i-j|)\big|\,.
  \qedhere
\end{align*}
\end{proof}

\subsection{Variance of the centering term}

We are now in a position to investigate the variance of the random
centering term. We begin by proving part $(ii)$ of
Theorem~\ref{th:centering}

\begin{proof}[Proof of part $(ii)$ of Theorem~\ref{th:centering}]
We follow the approach of~\cite{GZ25concentration}, specifically of
the proof of part $(iii)$ of Theorem~1.3 therein, exploiting the fact
that most of the arguments remain unchanged in our setting. For any
random variable $\Lambda$ on $\Omega$, we denote for brevity
$\Lambda-\Exd[\Lambda]$ by $\overline{\Lambda}$. The scheme adopted
in~\cite{GZ25concentration} consists of the following three steps:
\begin{itemize}[label=--]
  \item Prove at first by a subadditive argument that the limit
    $\lim_{n\uparrow\infty}
    (1/n)\Exd\big[\overline{\Ex_{2n,h,\cdot}[L_n]}^2\big] \eqc w(h)$
    exists for all $h>h_c$. As noted in~\cite{GZ25concentration}, the
    sequence with $n$-th term
    $\Exd\big[\overline{\Ex_{n,h,\cdot}[L_n]}^2\big]$ does not appear
    to possess a direct subadditivity property. For this reason, it is
    more convenient to first double the system size and deal with
    $\Exd\big[\overline{\Ex_{2n,h,\cdot}[L_n]}^2\big]$ in place of
    $\Exd\big[\overline{\Ex_{n,h,\cdot}[L_n]}^2\big]$.
\item Show later that $\lim_{n\uparrow\infty}
  (1/n)\Exd\big[\overline{\Ex_{n,h,\cdot}[L_n]}^2\big] = w(h)$ for all
  $h> h_c$ exploiting the fact that $\lim_{n\uparrow\infty}
  (1/n)\Exd\big[\overline{\Ex_{2n,h,\cdot}[L_n]}^2\big] = w(h)$.
\item Verify finally that for every closed interval $H\subset
  (h_c,+\infty)$ there exists $c>0$ such that, for all $n\in\N$, the
  function that maps $h\in H$ to
  $\Exd\big[\overline{\Ex_{n,h,\cdot}[L_n]}^2\big]$ is
  $(cn)$-Lipschitz continuous.
\end{itemize}
After resolving the first issue, the remaining two are tackled
in~\cite{GZ25concentration} for an i.i.d.\ disorder through quenched
computations of the form \eqref{basic_tool_1} and
\eqref{basic_tool_2}, together with the application of Lemma~3.4
therein. We have extended the latter to our setting in
Corollary~\ref{corol:correlations:polym}; consequently, a carbon copy
of the proofs in~\cite{GZ25concentration}, which we omit for the sake
of brevity, yields that the same results hold in our setting, provided
we demonstrate that the limit $\lim_{n\uparrow\infty}
(1/n)\Exd\big[\overline{\Ex_{2n,h,\cdot}[L_n]}^2\big] \eqc w(h)$ exists
for all $h> h_c$.  In turn, via an application of the
Arzel\`a-Ascoli theorem, this implies that the function mapping
$h>h_c$ to $\Exd\big[\overline{\Ex_{n,h,\cdot}[L_n]}^2\big]$ converges
to $w$ locally uniformly.  Again, as this follows the same argument as
in~\cite{GZ25concentration}, we do not reproduce it here.

\smallskip

We focus then on proving that the limit $\lim_{n\uparrow\infty}
(1/n)\Exd\big[\overline{\Ex_{2n,h,\cdot}[L_n]}^2\big] \eqc w(h)$ exists
for all $h>h_c$. To this aim, fix some $h>h_c$ and, in order to
simplify the notations, define for $n\in\N$
\begin{equation*}
\nu_n\ceq\Exd\Big[\overline{\Ex_{2n,h,\cdot}[L_n]}^2\Big] \quad \text{and} \quad
\zeta_n\ceq\sum_{a=0}^n(n-a)|\gamma(a)|\,.
\end{equation*}
We shall show below that there exists $C\ge 1$ such that for all
$m,n\in\N$
\begin{equation}
  \label{eq:prop:convergenceofvariance:subadd}
\nu_{m+n}\leq \nu_m + \nu_n + C\big[1+\sqrt{\nu_{\min\{m,n\}}}+\zeta_{m+n}-\zeta_m-\zeta_n\big]\,.
\end{equation}
From this bound and $\nu_1\le 1$ we deduce that $\nu_{n+1}\leq \nu_n
+1+ C(2+\bar\gamma)$ for any $n\in\N$, so in turn $\nu_n\leq
C(2+\bar\gamma)n$ because $C\ge 1$. At the same time we have
$\sqrt{\min\{m,n\}}\le 2(\sqrt{m}+\sqrt{n}-\sqrt{m+n})$. In
conclusion, we see that the sequence
$(\nu_n+C+2C\sqrt{C(2+\bar\gamma)n}-C\zeta_n)_{n\in\N}$ is
subadditive, so the limit
$\lim_{n\uparrow\infty}(\nu_n+C+2C\sqrt{C(2+\bar\gamma)n}-C\zeta_n)/n$
exists. Since also the limit
$\lim_{n\uparrow\infty}\zeta_n/n=\bar\gamma$ exists by the dominated
convergence theorem, we obtain that the limit
$\lim_{n\uparrow\infty}\nu_n/n\eqc w(h)$ exists.

\smallskip

Let us verify bound~\eqref{eq:prop:convergenceofvariance:subadd}.
Pick $m,n\in\N$ and suppose without loss of generality that $m\le
n$. Decomposing $L_{m+n}$ as the sum of $L_n$ and $L_{m+n}-L_n$ we can
write
\begin{align}
  \nonumber
  \nu_{m+n}:=\Exd\Big[\overline{\Ex_{{2m+2n},h,\cdot}[L_{m+n}]}^2\Big]&=\Exd\Big[\overline{\Ex_{2m+2n,h,\cdot}[L_n]}^2\Big]\\
  \nonumber
  &\quad +\Exd\Big[\overline{\Ex_{2m+2n,h,\cdot}[L_{m+n}-L_n]}^2\Big]\\
  &\quad\quad +2\Exd\Big[\overline{\Ex_{2m+2n,h,\cdot}[L_n]}\,\overline{\Ex_{2m+2n,h,\cdot}[L_{m+n}-L_n]}\Big]\,.
\label{eq:var_EL_1}
\end{align}
We claim that there exists a positive constant $C_1$, independent of
$m$ and $n$, such that
\begin{equation}
  \Exd\Big[\overline{\Ex_{2m+2n,h,\cdot}[L_n]}^2\Big] \leq \nu_n + C_1
\label{eq:var_EL_2}
\end{equation}
and
\begin{equation}
  \Exd\Big[\overline{\Ex_{2m+2n,h,\cdot}[L_{m+n}-L_n]}^2\Big] \leq \nu_m + C_1\sqrt{\nu_m} + C_1\,.
\label{eq:var_EL_3}
\end{equation}
In fact, these inequalities are identical to those established in the
proof of part $(iii)$ of~\cite[Theorem 1.3]{GZ25concentration} for the
i.i.d.\ case, which rely solely on quenched computations and Lemma~3.4
therein. As our Corollary~\ref{corol:correlations:polym} extends that
lemma to a Gaussian correlated environment, we are able to recover the
same bounds by directly following the original reasoning
in~\cite{GZ25concentration}.

Regarding the third term on the right-hand side of
\eqref{eq:var_EL_4}, its behavior differs from the i.i.d.\ case when a
correlated environment is considered. Indeed, it is the only term that
requires the use of Lemma~\ref{lem:correlations:envt}, which ensures
the existence of positive constants $C_2$ and $\lambda$, independent
of $m$ and $n$, such that for all $a,b\in\{1,\ldots, 2m+2n\}$
  \begin{align*}
    &\Big|\Exd\big[\Ex_{2m+2n,h,\cdot}[X_a]\Ex_{2m+2n,h,\cdot}[X_b]\big]-\Exd\big[\Ex_{2m+2n,h,\cdot}[X_a]\big]\Exd\big[\Ex_{2m+2n,h,\cdot}[X_b]\big]\Big|\\
    &\qquad\qquad\qquad\qquad\qquad\qquad\qquad\qquad\qquad\le C_2\sum_{i\in\N}\sum_{j\in\N}\big|\gamma(|i-j|)\big|\ee^{-\lambda|a-i|-\lambda|b-j|}\,.
  \end{align*}
So, we find
\begin{align}
  \nonumber
  &\Exd\Big[\overline{\Ex_{2m+2n,h,\cdot}[L_n]}\,\overline{\Ex_{2m+2n,h,\cdot}[L_{m+n}-L_n]}\Big]\\
  \nonumber
  &\qquad=\sum_{a=1}^n\sum_{b=n+1}^{m+n}\Exd\Big[\overline{\Ex_{2m+2n,h,\cdot}[X_a]}\,\overline{\Ex_{2m+2n,h,\cdot}[X_b]}\Big]\\
  \nonumber
  &\qquad\le\sum_{a=1}^n\sum_{b=n+1}^{m+n}\bigg|\Exd\Big[\Ex_{2m+2n,h,\cdot}[X_a]\Ex_{2m+2n,h,\cdot}[X_b]\Big]-\Exd\Big[\Ex_{2m+2n,h,\cdot}[X_a]\Big]
    \Exd\Big[\Ex_{2m+2n,h,\cdot}[X_b]\Big]\bigg|\\
  \nonumber
  &\qquad\le C_2\sum_{a=1}^n\sum_{b=n+1}^{m+n}\sum_{i\in\N}\sum_{j\in\N}\big|\gamma(|i-j|)\big|\ee^{-\lambda |a-i|-\lambda |b-j|}\,,
\end{align}
which thanks to Lemma~\ref{lem:sums} shows that
\begin{align}
  \nonumber
  &\Exd\Big[\overline{\Ex_{2m+2n,h,\cdot}[L_n]}\,\overline{\Ex_{2m+2n,h,\cdot}[L_{m+n}-L_n]}\Big]\\
  \nonumber
  &\qquad\le \frac{16C_2\bar\gamma}{(1-\ee^{-\lambda})^3}+\frac{4C_2}{(1-\ee^{-\lambda})^2}\sum_{i=1}^n\sum_{j=n+1}^{m+n}\big|\gamma(j-i)\big|\,.
\end{align}
On the other hand, we have
\begin{align*}
  \sum_{i=1}^n\sum_{j=n+1}^{n+m}\big|\gamma(j-i)\big|&=\sum_{i=1}^{m+n}\sum_{j=i}^{m+n}\big|\gamma(j-i)\big|-\sum_{i=1}^m\sum_{j=i}^m\big|\gamma(j-i)\big|
  -\sum_{i=1}^n\sum_{j=i}^n\big|\gamma(j-i)\big|\\
  &=\sum_{a=0}^{m+n}(m+n-a)|\gamma(a)|-\sum_{a=0}^m(m-a)|\gamma(a)|-\sum_{a=0}^n(n-a)|\gamma(a)|\\
  &=\zeta_{m+n}-\zeta_m-\zeta_n\,,
\end{align*}
which then allows us to obtain
\begin{align}
  \nonumber
  &\Exd\Big[\overline{\Ex_{2m+2n,h,\cdot}[L_n]}\,\overline{\Ex_{2m+2n,h,\cdot}[L_{m+n}-L_n]}\Big]\\
  &\qquad\le \frac{16C_2\bar\gamma }{(1-\ee^{-\lambda})^3}+\frac{4C_2}{(1-\ee^{-\lambda})^2}\big[\zeta_{m+n}-\zeta_m-\zeta_n\big]\,.
\label{eq:var_EL_4}
\end{align}

In conclusion, plugging \eqref{eq:var_EL_2}, \eqref{eq:var_EL_3}, and
\eqref{eq:var_EL_4} in \eqref{eq:var_EL_1}, and also bearing in mind
that the difference $\zeta_{m+n}-\zeta_m-\zeta_n$ is non negative, we find
\eqref{eq:prop:convergenceofvariance:subadd} with, e.g.,
\begin{equation*}
  C\ceq 1+2C_1+\frac{8C_2}{(1-\ee^{-\lambda})^2}+\frac{32C_2\bar\gamma }{(1-\ee^{-\lambda})^3}\,.
  \qedhere
\end{equation*}
\end{proof}

The final task of this section is to verify part $(iv)$ of
Theorem~\ref{th:centering}.

\begin{proof}[Proof of part $(iv)$ of Theorem \ref{th:centering}]
  Fix $h>h_c$ and suppose that $\gamma(0)+2\sum_{a\in\N}\gamma(a)>0$,
  otherwise there is nothing to prove since $w(h)\ge 0$ by
  definition. We prove that on the one hand
  \begin{equation}
    \limsup_{n\uparrow\infty}\frac{1}{n} \int_\Omega \sum_{a=1}^n\omega_a \Ex_{n,h,\omega}[L_n]\,\probd[\dd\omega]\le\sqrt{\bigg[\gamma(0)+2\sum_{a\in\N}\gamma(a)\bigg]w(h)}\,,
\label{eq:w_pos_1}
  \end{equation}
  and on the other hand
  \begin{equation}
    \liminf_{n\uparrow\infty}\frac{1}{n} \int_\Omega \sum_{a=1}^n\omega_a \Ex_{n,h,\omega}[L_n]\,\probd[\dd\omega]\ge\bigg[\gamma(0)+2\sum_{a\in\N}\gamma(a)\bigg]v(h)\,.
\label{eq:w_pos_2}
  \end{equation}
These bounds imply part $(iv)$ of Theorem~\ref{th:centering}.

\smallskip

Recall that the charges are centered and denote by
$\overline{\Lambda}$ the difference $\Lambda-\Exd[\Lambda]$, where
$\Lambda$ is any random variable $\Lambda$ on $\Omega$. Bound
\eqref{eq:w_pos_1} is due to the Cauchy-Schwarz inequality, which
gives for all $n\in\N$
\begin{align*}
  \int_\Omega \sum_{a=1}^n\omega_a \Ex_{n,h,\omega}[L_n]\,\probd[\dd\omega]&=
  \int_\Omega \sum_{a=1}^n\omega_a\, \overline{\Ex_{n,h,\omega}[L_n]}\,\probd[\dd\omega]\\
  &\le \sqrt{\int_\Omega \bigg(\sum_{a=1}^n\omega_a\bigg)^{\!2}\,\probd[\dd\omega]}\sqrt{\Exd\Big[\overline{\Ex_{n,h,\omega}[L_n]}^2\Big]}\,.
\end{align*}
Dividing by $n$ and letting $n\uparrow\infty$ we get
\eqref{eq:w_pos_1} since, on the one hand, $\lim_{n\uparrow\infty}
(1/n)\Exd\big[\overline{\Ex_{n,h,\cdot}[L_n]}^2\big] = w(h)$ by part
$(ii)$ of Theorem \ref{th:centering} and, on the other hand, one has
$\int_\Omega
\big(\sum_{a=1}^n\omega_a\big)^2\,\probd[\dd\omega]=\sum_{a=1}^n\sum_{b=1}^n\gamma(|a-b|)=n\gamma(0)+2\sum_{a=1}^n(n-a)\gamma(a)$
for any $n\in\N$ as noted in the introduction.

\smallskip

Coming to bound \eqref{eq:w_pos_2}, the Gaussian integration by parts
formula (see~\cite[Lemma~5.2]{Bis20notes}) yields for all $n\in\N$
\begin{align}
  \nonumber
  \int_\Omega \sum_{a=1}^n\omega_a \Ex_{n,h,\omega}[L_n]\,\probd[\dd\omega]&=
 \sum_{a=1}^n\sum_{b=1}^n \gamma(|a-b|)\int_\Omega\partial_{\omega_b} \Ex_{n,h,\omega}[L_n]\,\probd[\dd\omega]\\
 &=  \sum_{a=1}^n\sum_{b=1}^n\sum_{c=1}^n \gamma(|a-b|)\,\Exd\big[\cov_{n,h,\cdot}[X_b,X_c]\big] \,,
\label{eq:w_pos_3}
\end{align}
where the second equality is due to the identities
$L_n=\sum_{c=1}^nX_c$ and
$\partial_{\omega_b}\Ex_{n,h,\omega}[X_c]=\cov[X_b,X_c]$.  Noting that
$\sum_{a=1}^n\sum_{b=1}^n\cov_{n,h,\cdot}[X_a,X_b]=\var_{n,h,\cdot}[L_n]$
and introducing the numerical factor
$\zeta_n:=\sum_{a=n}^\infty\gamma(a)$ with $n\in\N$, we can recast
the right-hand side of \eqref{eq:w_pos_3} in order to obtain
\begin{align}
  \nonumber
  \int_\Omega \sum_{a=1}^n\omega_a \Ex_{n,h,\omega}[L_n]\,\probd[\dd\omega]&=
 \bigg[\gamma(0)+2\sum_{a\in\N}\gamma(a)\bigg]\Exd\big[\var_{n,h,\cdot}[L_n]\big]\\
 &\quad-\sum_{a=1}^n\sum_{b=1}^n\zeta_a\Big(\Exd\big[\cov_{n,h,\cdot}[X_a,X_b]\big] +\Exd\big[\cov_{n,h,\cdot}[X_{n-a+1},X_b]\big] \Big)\,.
\label{eq:w_pos_4}
\end{align}
Now we observe that $|\zeta_n|\le\bar\gamma$ for any $n\in\N$ and that
$\lim_{n\uparrow\infty}\zeta_n=0$, so in particular, given an
arbitrary $\eps>0$, there exists $n_o\in\N$ such that
$|\zeta_n|\le\eps$ for all $n\ge n_o$. Moreover,
Corollary~\ref{corol:correlations:polym} states that there exist
positive constants $C$ and $\lambda$ such that for all integers $1\le
a\le b\le n$
\begin{equation*}
\bbE\big[|\cov_{n,h,\cdot}[X_a,X_b]|\big] \leq C \ee^{-\lambda (b-a)}\,.
\end{equation*}
Combining these facts with \eqref{eq:w_pos_4}, we find for all
integers $n>n_o$
\begin{align*}
  \int_\Omega \sum_{a=1}^n\omega_a \Ex_{n,h,\omega}[L_n]\,\probd[\dd\omega]&\ge
 \bigg[\gamma(0)+2\sum_{a\in\N}\gamma(a)\bigg]\Exd\big[\var_{n,h,\cdot}[L_n]\big]\\
 &\quad-2\bar\gamma C\sum_{a=1}^{n_o}\sum_{b=1}^n \ee^{-\lambda|a-b|}-2\eps C\sum_{a=n_o+1}^n\sum_{b=1}^n \ee^{-\lambda|a-b|}\,.
\end{align*}
Dividing by $n$ and letting $n\uparrow\infty$, we finally realize that
\begin{equation*}
  \liminf_{n\uparrow\infty}\frac{1}{n}\int_\Omega \sum_{a=1}^n\omega_a
  \Ex_{n,h,\omega}[L_n]\,\probd[\dd\omega]\ge
  \bigg[\gamma(0)+2\sum_{a\in\N}\gamma(a)\bigg]v(h)
  -\frac{4C}{1-\ee^{-\lambda}}\,\eps\,
\end{equation*}
as Theorem~\ref{thm:fsmooth} implies $\lim_{n\uparrow\infty}
(1/n)\Exd[\var_{n,h,\cdot}[L_n]]=\lim_{n\uparrow\infty}
(1/n)\Exd[\partial_h^2\log Z_{n,h}]=\partial^2_h f(h)\eqc v(h)$.  This
inequality proves \eqref{eq:w_pos_2} because $\eps$ can be arbitrarily
small.
\end{proof}

\subsection{CLT for the centering term}
Finally we establish the CLT for the centering term, namely part
$(iii)$ of Theorem~\ref{th:centering}, by means of the Bernstein
blocking technique, which allows one to reduce a problem of dependent
variables to an independent scenario. For $n\in\N$ we devise $r_n$
blocks of size $m_n$ spaced by $d_n$ sites with $m_n\ceq \lfloor
n^{1/5}\rfloor$, $d_n\ceq \lfloor n^{1/6}\rfloor$, and $r_n\ceq\lfloor
n/(m_n+d_n)\rfloor$. Concretely, given a compact set $H\subset
(h_c,+\infty)$ and supposing that $w(h)>0$ for all $h\in H$, for $h\in
H$ and $s\in\{0,\ldots,r_{n}-1\}$ we consider the block random
variable
\begin{equation*}
  \Delta_{n,h,s}\ceq\frac{1}{\sqrt{nw(h)}}\sum_{a=l_s+1}^{l_s+m_n}\Ex_{n,h,\cdot}[X_a]\,,
\end{equation*}
where $l_s\ceq s(m_n+d_n)$. As the number $d_n$ is small compared with
$m_n$, we expect that
$\Ex_{n,h,\cdot}[L_n]/\sqrt{nw(h)}=\sum_{a=1}^n\Ex_{n,h,\cdot}[X_a]/\sqrt{nw(h)}$
is close to the sum $\sum_{s=0}^{r_n-1}\Delta_{n,h,s}$ for large
$n$. At the same time, any two random variables $\Delta_{n,h,s}$ and
$\Delta_{n,h,\sigma}$ with $s\ne \sigma$ are separated by at least
$d_n$ sites, so they are almost independent at large $n$.

\smallskip

Let us develop a key bound to make the above ideas fruitful.  Define
\begin{equation*}
  \Upsilon_{n,h}\ceq\frac{\Ex_{n,h,\cdot}[L_n]}{\sqrt{nw(h)}}-\sum_{s=0}^{r_n-1}\Delta_{n,h,s}
\end{equation*}
and note that, putting $l_{r_n}\ceq n$, one has
\begin{equation}
  \Upsilon_{n,h}=\frac{1}{\sqrt{nw(h)}}\sum_{s=0}^{r_n-1}\sum_{a=l_s+m_n+1}^{l_{s+1}}\Ex_{n,h,\cdot}[X_a]\,.
\label{eq:Ups_formula}
\end{equation}
Furthermore, in order to simplify upcoming notation, for any random
variable $\Lambda$ on $\Omega$ denote $\Lambda-\Exd[\Lambda]$ by
$\overline{\Lambda}$.  Applying Esseen's smoothing inequality (see
\cite[§XVI.3, Lemma~2]{Fel66book}) we can state that for all $n\in\N$,
$h\in H$, $u\in\Rl$, and $\epsilon>0$
\begin{align*}
&\Bigg|\probd\Big[\overline{\Ex_{n,h,\cdot}[L_n]}\le \sqrt{n w(h)}\,u\Big]
  -\frac{1}{\sqrt{2\pi}}\int_{-\infty}^u\ee^{-\frac{1}{2}z^2}\dd z\Bigg|\\
  &\qquad\le
  \int_{-1/\epsilon}^{1/\epsilon}\bigg|\frac{\Exd\big[\ee^{\ii\zeta\sum_{s=0}^{r_n-1}\overline{\Delta_{n,h,s}}+\ii\zeta\Upsilon_{n,h}}\big]-\ee^{-\frac{1}{2}\zeta^2}}{\zeta}\bigg|\dd\zeta+4\epsilon\\
  &\qquad\le \int_{-1/\epsilon}^{1/\epsilon}\bigg|\frac{\Exd\big[\ee^{\ii\zeta\sum_{s=0}^{r_n-1}\overline{\Delta_{n,h,s}}}\big]-\ee^{-\frac{1}{2}\zeta^2}}{\zeta}\bigg|\dd\zeta
  +\int_{-1/\epsilon}^{1/\epsilon}\frac{\Exd\big[|\ee^{\ii\zeta\,\overline{\Upsilon_{n,h}}}-1|\big]}{|\zeta|}\dd\zeta+4\epsilon\,.
\end{align*}
On the other hand, using Markov's inequality and $|\ee^{\ii z}-1|\le \min\{2,|z|\}$
for all $z\in\Rl$ (see \cite{Dur10book}, Lemma 3.3.7) we see that
\begin{equation*}
  \Exd\Big[\big|\ee^{\ii\zeta \overline{\Upsilon_{n,h}}}-1\big|\Big]
  \le 2\probd\Big[|\zeta|\overline{\Upsilon_{n,h}}^2\ge \epsilon^3\Big]+\sqrt{|\zeta|\epsilon^3}
  \le \frac{2|\zeta|}{\epsilon^3}\,\Exd\big[\overline{\Upsilon_{n,h}}^2\big]+\sqrt{|\zeta|\epsilon^3}\,,
\end{equation*}
so
\begin{align*}
&\Bigg|\probd\Big[\overline{\Ex_{n,h,\cdot}[L_n]}\le \sqrt{n w(h)}\,u\Big]
  -\frac{1}{\sqrt{2\pi}}\int_{-\infty}^u\ee^{-\frac{1}{2}z^2}\dd z\Bigg|\\
  &\qquad\le \int_{-1/\epsilon}^{1/\epsilon}\bigg|\frac{\Exd\big[\ee^{\ii\zeta\sum_{s=0}^{r_n-1}\overline{\Delta_{n,h,s}}}\big]-\ee^{-\frac{1}{2}\zeta^2}}{\zeta}\bigg|\dd\zeta
  +\frac{4}{\epsilon^4}\,\Exd\big[\overline{\Upsilon_{n,h}}^2\big]+8\epsilon\,.
\end{align*}
At this point, the identity
\begin{align*}
   &\Exd\Big[\ee^{\ii\zeta\sum_{s=0}^{r_n-1}\overline{\Delta_{n,h,s}}}\Big]-\prod_{s=0}^{r_n-1}\Exd\Big[\ee^{\ii\zeta\overline{\Delta_{n,h,s}}}\Big]\\
  &\qquad =\sum_{s=1}^{r_n-1}\Bigg\{\Exd\Big[\ee^{\ii\zeta\sum_{\sigma=0}^s\overline{\Delta_{n,h,\sigma}}}\Big]\prod_{\sigma=s+1}^{r_n}\Exd\Big[\ee^{\ii\zeta\overline{\Delta_{n,h,\sigma}}}\Big]-
  \Exd\Big[\ee^{\ii\zeta\sum_{\sigma=0}^{s-1}\overline{\Delta_{n,h,\sigma}}}\Big]\prod_{\sigma=s}^{r_n}\Exd\Big[\ee^{\ii\zeta\overline{\Delta_{n,h,\sigma}}}\Big]\Bigg\}
  \end{align*}
gives
\begin{align}
  \nonumber
&\Bigg|\probd\Big[\overline{\Ex_{n,h,\cdot}[L_n]}\le \sqrt{n w(h)}\,u\Big]
  -\frac{1}{\sqrt{2\pi}}\int_{-\infty}^u\ee^{-\frac{1}{2}z^2}\dd z\Bigg|\\
  \nonumber
  &\qquad\le \int_{-1/\epsilon}^{1/\epsilon}\bigg|\frac{\prod_{s=0}^{r_n-1}\Exd\big[\ee^{\ii\zeta\overline{\Delta_{n,h,s}}}\big]-\ee^{-\frac{1}{2}\zeta^2}}{\zeta}\bigg|\dd\zeta
  +\frac{4}{\epsilon^4}\,\Exd\big[\overline{\Upsilon_{n,h}}^2\big]+8\epsilon\\
  &\qquad\quad+\int_{-1/\epsilon}^{1/\epsilon}\sum_{s=1}^{r_n-1}
  \bigg|\frac{\Exd\big[\ee^{\ii\zeta\sum_{\sigma=0}^s\overline{\Delta_{n,h,\sigma}}}\big]-\Exd\big[\ee^{\ii\zeta\sum_{\sigma=0}^{s-1}\overline{\Delta_{n,h,\sigma}}}\big]
  \Exd\big[\ee^{\ii\zeta\overline{\Delta_{n,h,s}}}\big]}{\zeta}\bigg|\dd\zeta\,.
\label{eq:CLT_centering_10}
\end{align}
This bound is the key step towards demonstrating part $(iii)$ of
Theorem~\ref{th:centering}. We shall analyze the arguments of the two
integrals in the right-hand side of \eqref{eq:CLT_centering_10},
verifying what follows.  According to
Corollary~\ref{corol:correlations:polym} and
Lemma~\ref{lem:correlations:envt}, there exist positive constants $C$
and $\lambda$ such that for all $n\in\N$ and $a,b\in\{0,\ldots, n\}$
\begin{equation}
\sup_{h\in H}\Exd\big[\cov_{n,h,\cdot}[X_a,X_b]^2\big] \le\sup_{h\in H}\Exd\big[|\cov_{n,h,\cdot}[X_a,X_b]|\big] \leq C(1-\ee^{-\lambda})^3\ee^{-2\lambda|b-a|}
\label{eq:CLT_centering_1}
\end{equation}
and
\begin{align}
  \nonumber
    &\sup_{h\in H}\Big|\Exd\big[\Ex_{n,h,\cdot}[X_a]\Ex_{n,h,\cdot}[X_b]\big]-\Exd\big[\Ex_{n,h,\cdot}[X_a]\big]\Exd\big[\Ex_{n,h,\cdot}[X_b]\big]\Big|\\
    &\qquad\qquad\qquad\qquad\qquad\qquad\le C(1-\ee^{-\lambda})^3\sum_{i\in\N}\sum_{j\in\N}\big|\gamma(|i-j|)\big|\ee^{-\lambda|a-i|-\lambda|b-j|}\,.
\label{eq:CLT_centering_2}
\end{align}
Then, with the constants $C$ and $\lambda$ at hand, we shall prove that for every
$n\in\N$, $h\in H$, and $\zeta\in\Rl$
\begin{equation}
  \bigg|\prod_{s=0}^{r_n-1}\Exd\Big[\ee^{\ii\zeta\overline{\Delta_{n,h,s}}}\Big]
  -\ee^{-\frac{1}{2}\zeta^2}\bigg|\le \frac{2|\zeta|^3}{\sqrt{w(h)^3}}\frac{m_n^2}{\sqrt{n}}
  +\frac{\zeta^2}{2}\bigg|\sum_{s=0}^{r_n-1}\Exd\big[\overline{\Delta_{n,h,s}}^2\big]-1\bigg|\,,
\label{eq:CLT_centering_11}
\end{equation}
with
\begin{align}
  \nonumber
  \bigg|\sum_{s=0}^{r_n-1}\Exd\big[\overline{\Delta_{n,h,s}}^2\big]-1\bigg|
  &\le\bigg|\frac{\Exd\big[\overline{\Ex_{n,h,\cdot}[L_n]}^2\big]}{nw(h)}-1\bigg|\\
  &\quad+\frac{32C\bar\gamma}{w(h)}\frac{r_n}{n}  +\frac{24C\bar\gamma}{w(h)}\frac{n-r_nm_n}{n}
  +\frac{8C}{w(h)}\sum_{a=d_n+1}^\infty|\gamma(a)|\,,
   \label{eq:CLT_centering_12}
\end{align}
and
\begin{align}
  \nonumber
  &\sum_{s=1}^{r_n-1}\Big|\Exd\big[\ee^{\ii\zeta\sum_{k=0}^s\overline{\Delta_{n,h,\sigma}}}\big]-\Exd\big[\ee^{\ii\zeta\sum_{k=0}^{s-1}\overline{\Delta_{n,h,\sigma}}}\big]
  \Exd\big[\ee^{\ii\zeta\overline{\Delta_{n,h,s}}}\big]\Big|\\
&\qquad\qquad\qquad\qquad\qquad\qquad\qquad\le  \frac{16C\bar\gamma\zeta^2}{w(h)}\frac{r_n}{n}+\frac{4C\zeta^2}{w(h)}\sum_{a=d_n+1}^\infty|\gamma(a)|\,.
\label{eq:CLT_centering_13}
\end{align}
Furthermore, we also claim that
\begin{equation}
\Exd\big[\overline{\Upsilon_{n,h}}^2\big]\le\frac{8C\bar\gamma}{w(h)}\frac{n-r_nm_n}{n},
  \label{eq:CLT_centering_14}
\end{equation}
and, since $w$ is continuous on $(h_c,+\infty)$, there exists $h_o\in
H$ such that $w(h)\ge w(h_o)>0$ for all $h\in H$.  Combining
\eqref{eq:CLT_centering_10} with \eqref{eq:CLT_centering_11},
\eqref{eq:CLT_centering_12}, \eqref{eq:CLT_centering_13}, and
\eqref{eq:CLT_centering_14} we conclude that for all $n\in\N$ and
$\epsilon>0$
\begin{align*}
&\sup_{h\in H}\,\sup_{u\in\Rl}\,\Bigg|\probd\Big[\overline{\Ex_{n,h,\cdot}[L_n]}\le \sqrt{n w(h)}\,u\Big]
  -\frac{1}{\sqrt{2\pi}}\int_{-\infty}^u\ee^{-\frac{1}{2}z^2}\dd z\Bigg|\\
  &\qquad\le \frac{2C\bar\gamma}{\epsilon^3\sqrt{w(h_o)^3}}\frac{m_n^2}{\sqrt{n}}
  +\frac{1}{2\epsilon^2w(h_o)}\sup_{h\in H}\bigg|\frac{\Exd\big[\overline{\Ex_{n,h,\cdot}[L_n]}^2\big]}{n}-w(h)\bigg|\\
  &\qquad\quad+\bigg(\frac{12}{\epsilon^2}+\frac{32}{\epsilon^4}\bigg)\frac{C\bar\gamma}{w(h_o)}\frac{n-r_nm_n}{n}+8\epsilon
  + \frac{32C\bar\gamma}{\epsilon^2w(h_o)}\frac{r_n}{n}+\frac{8C}{\epsilon^2w(h_o)} \sum_{a=d_n+1}^\infty|\gamma(a)|\,.
\end{align*}
Recalling part $(ii)$ of Theorem~\ref{th:centering}, from here we get
part $(iii)$ by letting first $n\uparrow\infty$ and then
$\epsilon\downarrow 0$ due to the suitable choice of $m_n$ and $d_n$. It only remains to prove~\eqref{eq:CLT_centering_11} through~\eqref{eq:CLT_centering_14}.

\begin{proof}[Proof of \eqref{eq:CLT_centering_11}]
  Observe that $0\le \Delta_{n,h,s}\le m_n/\sqrt{nw(h)}$ for every
  $n\in\N$, $h\in H$, and $s\in\{0,\ldots,r_{n-1}\}$.  Since
  $|\ee^{\ii z}-1-\ii z+z^2/2|\le |z|^3$ for all $z\in\Rl$ (see
  \cite{Dur10book}, Lemma 3.3.7), for $n\in\N$, $h\in H$, $\zeta\in\Rl$,
  and $s\in\{0,\ldots,r_n-1\}$ we can write
\begin{equation*}
  \bigg|\Exd\big[\ee^{\ii\zeta\overline{\Delta_{n,h,s}}}\big]-1+
  \frac{\zeta^2}{2}\Exd\big[\overline{\Delta_{n,h,s}}^2\big]\bigg|\le \frac{|\zeta|^3m_n^3}{\sqrt{n^3w(h)^3}}\,.
\end{equation*}
As $|\ee^{-z^2}-1+z^2|\le 2|z|^3$ for all $z\in\Rl$, we can also
state that
\begin{equation*}
  \bigg|\Exd\Big[\ee^{\ii\zeta\overline{\Delta_{n,h,s}}}\Big]
  -\ee^{-\frac{\zeta^2}{2}\Exd[\overline{\Delta_{n,h,s}}^2]}\bigg|\le \frac{2|\zeta|^3m_n^3}{\sqrt{n^3w(h)^3}}\,.
\end{equation*}
This inequality gives
\begin{equation}
  \bigg|\prod_{s=0}^{r_n-1}\Exd\Big[\ee^{\ii\zeta\overline{\Delta_{n,h,s}}}\Big]
  -\ee^{-\frac{\zeta^2}{2}\sum_{s=0}^{r_n-1}\Exd[\overline{\Delta_{n,h,s}}^2]}\bigg|\le \frac{2|\zeta|^3m_n^3r_n}{\sqrt{n^3w(h)^3}}
  \le \frac{2|\zeta|^3}{\sqrt{w(h)^3}}\frac{m_n^2}{\sqrt{n}}
 \label{eq:CLT_centering_111}
\end{equation}
because, on the one hand,
$\big|\prod_{s=0}^{r-1}z_s-\prod_{s=0}^{r-1}z_s'\big|\le
\sum_{s=0}^{r-1}|z_s-z_s'|$ for every $r\in\N$ and complex numbers
$z_1,\ldots,z_r$ and $z_1',\ldots,z_r'$ with modulus smaller than or
equal to one (see \cite{Dur10book}, Lemma 3.4.3) and, on the other hand,
$r_n\le n/m_m$. Bound \eqref{eq:CLT_centering_11} follows from
\eqref{eq:CLT_centering_111} thanks to the fact that
$|\ee^{-z}-\ee^{-z'}|\le|z-z'|$ for all non-negative $z,z'\in\Rl$.
\end{proof}

\begin{proof}[Proof of \eqref{eq:CLT_centering_14}]
Expliciting $\Upsilon_{n,h}$ given by formula \eqref{eq:Ups_formula}
and invoking \eqref{eq:CLT_centering_2} we see that for all $n\in\N$
and $h\in H$
\begin{align*}
  &\Exd\big[\overline{\Upsilon_{n,h}}^2\big]\\
  &\qquad\le\frac{1}{nw(h)}\sum_{s=0}^{r_n-1}\sum_{a=l_s+m_n+1}^{l_{s+1}}\sum_{b=1}^n
  \Big|\Exd\big[\Ex_{n,h,\cdot}[X_a]\Ex_{n,h,\cdot}[X_b]\big]-\Exd\big[\Ex_{n,h,\cdot}[X_a]\big]\Exd\big[\Ex_{n,h,\cdot}[X_b]\big]\Big|\\
  &\qquad\le\frac{C(1-\ee^{-\lambda})^3}{nw(h)}
  \sum_{s=0}^{r_n-1}\sum_{a=l_s+m_n+1}^{l_{s+1}}\sum_{i\in\mathbb{Z}}\sum_{j\in\mathbb{Z}}\sum_{b\in\mathbb{Z}}\big|\gamma(|i-j|)\big|\ee^{-\lambda|a-i|-\lambda|b-j|}\\
  &\qquad\le\frac{8C\bar\gamma}{nw(h)}\sum_{s=0}^{r_n-1}(l_{s+1}-l_s-m_n)=\frac{8C\bar\gamma}{w(h)}\frac{n-r_nm_n}{n}\,.
\qedhere
\end{align*}
\end{proof}

\begin{proof}[Proof of \eqref{eq:CLT_centering_12}]
For all $n\in\N$ and $h\in H$ we have
\begin{align}
  \nonumber
  \frac{\Exd\big[\overline{\Ex_{n,h,\cdot}[L_n]}^2\big]}{nw(h)}&= \Exd\bigg[\bigg(\sum_{s=0}^{r_n-1}\overline{\Delta_{n,h,s}}+\overline{\Upsilon_{n,h}}\bigg)^{\!\!2}\,\bigg]\\
  \nonumber
  &=\sum_{s=0}^{r_n-1}\Exd\big[\overline{\Delta_{n,h,s}}^2\big]+2\sum_{s=0}^{r_n-2}\sum_{\sigma=s+1}^{r_n-1}\Exd\big[\overline{\Delta_{n,h,s}}\,\overline{\Delta_{n,h,\sigma}}\big]\\
  &\qquad\qquad\qquad\qquad\quad+2\Exd\bigg[\sum_{s=0}^{r_n-1}\overline{\Delta_{n,h,s}}\,\overline{\Upsilon_{n,h}}\bigg]+ \Exd\big[\overline{\Upsilon_{n,h}}^2\big]\,.
\label{eq:CLT_centering_123}
\end{align}
Similarly to \eqref{eq:CLT_centering_14} we find
\begin{align}
  \nonumber
  &\Bigg|\Exd\bigg[\sum_{s=0}^{r_n-1}\overline{\Delta_{n,h,s}}\,\overline{\Upsilon_{n,h}}\bigg]\Bigg|\\
  \nonumber
  &\qquad\le \frac{1}{nw(h)}\sum_{s=0}^{r_n-1}\sum_{a=1}^n\sum_{b=l_s+m_n+1}^{l_{s+1}}
  \Big|\Exd\big[\Ex_{n,h,\cdot}[X_a]\Ex_{n,h,\cdot}[X_b]\big]-\Exd\big[\Ex_{n,h,\cdot}[X_a]\big]\Exd\big[\Ex_{n,h,\cdot}[X_b]\big]\Big|\\
  &\qquad\le \frac{8C\bar\gamma}{w(h)}\frac{n-r_nm_n}{n}\,.
  \label{eq:CLT_centering_124}
\end{align}
Moreover, \eqref{eq:CLT_centering_2} and Lemma \ref{lem:sums} show
that
\begin{align*}
  &\Bigg|\sum_{s=0}^{r_n-2}\sum_{\sigma=s+1}^{r_n-1}\Exd\big[\overline{\Delta_{n,h,s}}\,\overline{\Delta_{n,h,\sigma}}\big]\Bigg|\\
  &\qquad\le\frac{1}{nw(h)}\sum_{s=0}^{r_n-2}\sum_{a=l_s+1}^{l_s+m_n}\sum_{b=l_{s+1}+1}^n
  \Big|\Exd\big[\Ex_{n,h,\cdot}[X_a]\Ex_{n,h,\cdot}[X_b]\big]-\Exd\big[\Ex_{n,h,\cdot}[X_a]\big]\Exd\big[\Ex_{n,h,\cdot}[X_b]\big]\Big|\\
  &\qquad\le \frac{C(1-\ee^{-\lambda})^3}{nw(h)}
  \sum_{s=0}^{r_n-2}\sum_{a=l_s+1}^{l_s+m_n}\sum_{b=l_{s+1}+1}^n\sum_{i\in\N}\sum_{j\in\N}\big|\gamma(|i-j|)\big|\ee^{-\lambda|a-i|-\lambda(b-j)}\\
  &\qquad\le \frac{16C\bar\gamma}{w(h)}\frac{r_n}{n}
  +\frac{4C}{nw(h)}\sum_{s=0}^{r_n-2}\sum_{i=l_s+1}^{l_s+m_n}\sum_{j=l_{s+1}+1}^n|\gamma(j-i)|\,.
\end{align*}
From here, since $l_{s+1}=l_s+m_n+d_n$ for $s\le r_n-2$ and since
$r_nm_n\le n$, we deduce that
\begin{equation}
  \Bigg|\sum_{s=0}^{r_n-2}\sum_{\sigma=s+1}^{r_n-1}\Exd\big[\overline{\Delta_{n,h,s}}\,\overline{\Delta_{n,h,\sigma}}\big]\Bigg|
  \le \frac{16C\bar\gamma}{w(h)}\frac{r_n}{n}+\frac{4C}{w(h)}\sum_{a=d_n+1}^\infty|\gamma(a)|\,.
\label{eq:CLT_centering_125}
\end{equation}
Plugging \eqref{eq:CLT_centering_125}, \eqref{eq:CLT_centering_124}, and
\eqref{eq:CLT_centering_14} in \eqref{eq:CLT_centering_123} we finally
obtain for all $n\in\N$ and $h\in H$
\begin{align*}
  &\Bigg|\sum_{s=0}^{r_n-1}\Exd\big[\overline{\Delta_{n,h,s}}^2\big]-\frac{\Exd\big[\overline{\Ex_{n,h,\cdot}[L_n]}^2\big]}{nw(h)}\Bigg|\\
  &\qquad\le\frac{32C\bar\gamma}{w(h)}\frac{r_n}{n}  +\frac{24C\bar\gamma}{w(h)}\frac{n-r_nm_n}{n}
  +\frac{8C}{w(h)}\sum_{a=d_n+1}^\infty|\gamma(a)|\,,
\end{align*}
which directly implies bound \eqref{eq:CLT_centering_12}.
\end{proof}

\begin{proof}[Proof of \eqref{eq:CLT_centering_13}]
To begin with, we observe that for all $n\in\N$, $h\in H$,
$i\in\{1,\ldots,n\}$, $s\in\{0,\ldots,r_n-1\}$, and
$\omega=(\omega_a)_{a\in\N_0}$
\begin{equation*}
\partial_{\omega_i}\overline{\Delta_{n,h,s}}(\omega)=\frac{1}{\sqrt{nw(h)}}\sum_{a=l_s+1}^{l_s+m_n}\cov_{n,h,\omega}[X_a,X_i]\,.
\end{equation*}
Using this, a standard argument of Gaussian interpolation and
integration by parts (see \cite[Lemma~6.2]{Bis20notes}) gives for all
$n\in\N$, $h\in H$, $\zeta\in\Rl$, and $s\in\{1,\ldots,r_n-1\}$
\begin{align*}
  &\Exd\big[\ee^{\ii\zeta\sum_{\sigma=0}^s\overline{\Delta_{n,h,\sigma}}}\big]-\Exd\big[\ee^{\ii\zeta\sum_{\sigma=0}^{s-1}\overline{\Delta_{n,h,\sigma}}}\big]
  \Exd\big[\ee^{\ii\zeta\overline{\Delta_{n,h,s}}}\big]\\
  &\qquad= -\frac{\zeta^2}{nw(h)}
  \int_0^1\dd z \sum_{i=1}^n\sum_{j=1}^n\gamma(|i-j|)\int_{\Omega}\probd[\dd\omega]\int_{\Omega}\probd[\dd\omega']\\
  &\qquad\quad\times\bigg(\sum_{\sigma=0}^{s-1}\sum_{a=l_\sigma+1}^{l_\sigma+m_n}\cov_{n,h,\omega}[X_a,X_i]\bigg)\ee^{\ii\zeta\sum_{\sigma=0}^{s-1}\overline{\Delta_{n,h,\sigma}}(\omega)}\\
&\qquad\quad\quad\times\bigg(\sum_{a=l_s+1}^{l_s+m_n}\cov_{n,h,z\omega+\sqrt{1-z^2}\omega'}[X_a,X_i]\bigg)\ee^{\ii\zeta\overline{\Delta_{n,h,s}}(z\omega+\sqrt{1-z^2}\omega')}\,,
\end{align*}
so
\begin{align}
  \nonumber
  &\Big|\Exd\big[\ee^{\ii\zeta\sum_{\sigma=0}^s\overline{\Delta_{n,h,\sigma}}}\big]-\Exd\big[\ee^{\ii\zeta\sum_{\sigma=0}^{s-1}\overline{\Delta_{n,h,\sigma}}}\big]
  \Exd\big[\ee^{\ii\zeta\overline{\Delta_{n,h,s}}}\big]\Big|\\
  \nonumber
  &\qquad\le \frac{\zeta^2}{nw(h)}\sum_{a=1}^{l_{s-1}+m_n}\sum_{b=l_s+1}^{l_s+m_n}\sum_{i=1}^n\sum_{j=1}^n\big|\gamma(|i-j|)\big|
  \int_0^1\dd z \int_{\Omega}\probd[\dd\omega]\int_{\Omega}\probd[\dd\omega']\\
  &\qquad\quad\times\big|\cov_{n,h,\omega}[X_a,X_i]\big|\big|\cov_{n,h,z\omega+\sqrt{1-z^2}\omega'}[X_b,X_i]\big|\,.
\label{eq:CLT_centering_131}
\end{align}
Thanks to well-known properties of Gaussian random variables, for any
$z\in[0,1]$ the sequence $z\omega+\sqrt{1-z^2}\omega'$ is an outcome
of an environment with law $\probd$. Then, an application of the
Cauchy-Schwarz inequality to \eqref{eq:CLT_centering_131} allows us to
obtain
\begin{align*}
  &\Big|\Exd\big[\ee^{\ii\zeta\sum_{\sigma=0}^s\overline{\Delta_{n,h,\sigma}}}\big]-\Exd\big[\ee^{\ii\zeta\sum_{\sigma=0}^{s-1}\overline{\Delta_{n,h,\sigma}}}\big]
  \Exd\big[\ee^{\ii\zeta\overline{\Delta_{n,h,s}}}\big]\Big|\\
  &\qquad\le \frac{\zeta^2}{nw(h)}\sum_{a=1}^{l_{s-1}+m_n}\sum_{b=l_s+1}^{l_s+m_n}\sum_{i=1}^n\sum_{j=1}^n\big|\gamma(|i-j|)\big|
  \sqrt{\Exd\big[\cov_{n,h,\cdot}[X_a,X_i]^2\big]\Exd\big[\cov_{n,h,\cdot}[X_b,X_j]^2\big]}\\
  &\qquad\le \frac{C(1-\ee^{-\lambda})^3\zeta^2}{nw(h)}\sum_{a=1}^{l_{s-1}+m_n}\sum_{b=l_s+1}^{l_s+m_n}\sum_{i\in\N}\sum_{j\in\N}\big|\gamma(|i-j|)\big|\ee^{-\lambda|a-i|-\lambda|b-j|}\,,
\end{align*}
where the second bound is due to \eqref{eq:CLT_centering_1}. At this
point, invoking Lemma \ref{lem:sums} and noting that
$l_{s-1}+m_n=l_s-d_n$, we deduce that
\begin{align*}
  \Big|\Exd\big[\ee^{\ii\zeta\sum_{\sigma=0}^s\overline{\Delta_{n,h,\sigma}}}\big]-\Exd\big[\ee^{\ii\zeta\sum_{\sigma=0}^{s-1}\overline{\Delta_{n,h,\sigma}}}\big]
  \Exd\big[\ee^{\ii\zeta\overline{\Delta_{n,h,s}}}\big]\Big|&\le \frac{16C\bar\gamma\zeta^2}{nw(h)}+\frac{4C\zeta^2}{nw(h)}\sum_{i=1}^{l_s-d_n}\sum_{j=l_s+1}^{l_s+m_n}\big|\gamma(j-i)\big|\\
  &\le \frac{16C\bar\gamma\zeta^2}{nw(h)}+\frac{4C\zeta^2m_n}{nw(h)}\sum_{a=d_n+1}^\infty|\gamma(a)|\,.
\end{align*}
Carrying out the sum over $s\in\{1,\ldots,r_n-1\}$ and recalling that
$r_nm_n\le n$ we finally find for all $n\in\N$, $h\in H$, and
$\zeta\in\Rl$
\begin{align*}
  &\sum_{s=1}^{r_n-1}\Big|\Exd\big[\ee^{\ii\zeta\sum_{k=0}^s\overline{\Delta_{n,h,\sigma}}}\big]-\Exd\big[\ee^{\ii\zeta\sum_{k=0}^{s-1}\overline{\Delta_{n,h,\sigma}}}\big]
  \Exd\big[\ee^{\ii\zeta\overline{\Delta_{n,h,s}}}\big]\Big|\\
&\qquad\qquad\qquad\qquad\qquad\qquad\qquad\le  \frac{16C\bar\gamma\zeta^2}{w(h)}\frac{r_n}{n}+\frac{4C\zeta^2}{w(h)}\sum_{a=d_n+1}^\infty|\gamma(a)|\,,
\end{align*}
which is exactly \eqref{eq:CLT_centering_13}.
\end{proof}


\bibliographystyle{abbrv}
\bibliography{biblio.bib}

\end{document}